\documentclass{report}
\usepackage{dsfont}
\usepackage[final]{graphicx}
\usepackage{pstricks} 
\usepackage{pst-all}
\usepackage{colortbl}
\usepackage{algorithmicx}
\usepackage{stackengine}
\usepackage[T1]{fontenc}
\usepackage{amsmath}
\usepackage{mathtools}
\usepackage{mathptmx}
\usepackage{amsthm}
\usepackage{amssymb}
\usepackage{mathrsfs}
\usepackage{algorithm}
\usepackage{algpseudocode}
\newtheorem{theorem}{Theorem}[section]

\newtheorem{lemma}[theorem]{Lemma}
\newtheorem{definition}{Definition}
\usepackage{listings}
\usepackage{graphics}
\usepackage{float} 
\usepackage{xcolor}
\definecolor{RoyalBlue}{RGB}{65,105,225} % Define RoyalBlue color
\definecolor{Blue}{RGB}{0,0,255} % Define Blue color
\definecolor{ForestGreen}{RGB}{34,139,34} % Define ForestGreen color
\definecolor{YellowGreen}{RGB}{154,205,50} % Define YellowGreen color
\lstset{ 
	language=R,                     % the language of the code
	basicstyle=\tiny\ttfamily, % the size of the fonts that are used for the code
	numbers=left,                   % where to put the line-numbers
	numberstyle=\tiny\color{Blue},  % the style that is used for the line-numbers
	stepnumber=1,                   % the step between two line-numbers. If it is 1, each line
	% will be numbered
	numbersep=5pt,                  % how far the line-numbers are from the code
	backgroundcolor=\color{white},  % choose the background color. You must add \usepackage{color}
	showspaces=false,               % show spaces adding particular underscores
	showstringspaces=false,         % underline spaces within strings
	showtabs=false,                 % show tabs within strings adding particular underscores
	frame=single,                   % adds a frame around the code
	rulecolor=\color{black},        % if not set, the frame-color may be changed on line-breaks within not-black text (e.g. commens (green here))
	tabsize=2,                      % sets default tabsize to 2 spaces
	captionpos=b,                   % sets the caption-position to bottom
	breaklines=true,                % sets automatic line breaking
	breakatwhitespace=false,        % sets if automatic breaks should only happen at whitespace
	keywordstyle=\color{RoyalBlue},      % keyword style
	commentstyle=\color{YellowGreen},   % comment style
	stringstyle=\color{ForestGreen}      % string literal style
}

\DeclareGraphicsExtensions{.bmp,.png,.pdf,.jpg,eps.}
\usepackage{enumerate}
\usepackage{amssymb,amsmath,amsthm,bbm,keyval,color,psfrag,multirow,lscape}%,overcite}
\usepackage[bookmarks=true,bookmarksopen=true,breaklinks=true,letterpaper=true,pdftitle={DCFFT},plainpages=false,pdfauthor={StudentRUM},colorlinks=true,hypertexnames=false,citecolor=blue,linkcolor=blue,file
color=blue]{hyperref}
\usepackage[square,sort,comma,numbers]{natbib}
\usepackage{rotating}   % Package for rotate tables
\usepackage{subfigure}  %If you want subfigures
\usepackage{multicol}
\usepackage[font=footnotesize,labelfont=bf]{caption}
\usepackage{subfigure}

\theoremstyle{plain}

\hypersetup{urlcolor=blue}			 				% Especifica el azul para los hypervinculos. 
					% Estas dos lineas son utiles para el capitulo 4.

\newenvironment{myabstract}{%
	\begin{center}%
		\bfseries Abstract\end{center}}%
{\par\medskip}
\title{A UNIVERSAL ROBUST BOUND FOR THE INTRINSIC BAYES FACTOR.}
\author{Richard Clare}
\date{\today}
\begin{document}
\maketitle 
\begin{myabstract}In this work, we undertake a comprehensive reformulation, modification, and extension of Smith \& Spiegelhalter's \cite{smith1980bayesfactors} and \cite{spiegelhalter1982bayesfactors} Bayes Factor work within the evolving subject of Objective Bayes Factors. Our primary focus centers on defining and computing empirical and theoretical bounds for the Intrinsic Bayes Factor (IBF) across various models, including normal, exponential, Poisson, geometric, linear, and ANOVA. We show that our new bounds are useful, feasible, and change with the amount of information. We also propose a methodology to construct the least favorable (for the null model) intrinsic priors that result in the lower and upper bounds of the Intrinsic Bayes Factors under certain conditions.  Notably, our lower bounds exhibit superior performance compared to the well-known $-ep\log(p)$ bound proposed by Sellke et al. (2001) \cite{sellke2001calibration} based on p-values.
\end{myabstract}
\tableofcontents

% Introduction.tex
%==========================================================================
\chapter{Introduction}  % En Mayúsculas (In Caps)
\hspace{10pt}The misuse of classical hypothesis testing methods, specifically p-values, has been the subject of substantial critique in the statistical literature (Wasserstein and Lazar, 2016 \cite{wasserstein2016asa}; Wasserstein et al., 2019 \cite{wasserstein2019moving}). Despite these warnings, the practice of p-value null hypothesis significance testing (NHST) continues to be a predominant technique for model selection. However, alternatives do exist. Among them, the Bayesian testing methodology developed by Sir Harold Jeffreys (1961) \cite{jeffreys1961theory} stands as one of the most widely accepted and used. Jeffreys's methodology, premised on the concept that the null hypothesis carries a positive point mass, quantifies the support for or against a model based on the data. Additionally, it offers practical applications through derived approximations for complex or analytically infeasible solutions. A significant contribution of Jeffreys's framework is the Bayes factor, a measure of support for one model over another, irrespective of the absolute correctness of these models. Its effectiveness spans various inference desiderata, including interpretability, adherence to Occam's razor, and consistency in large samples. Kass and Raftery (1995) \cite{kass1995bayes} offer an extensive introduction to Bayes factors.
Our research primarily concentrates on the pivotal role of training samples within statistical methodologies. These methodologies include classification and discrimination, robustness, model selection, and cross-validation. Recent advancements in Bayesian model selection, namely the intrinsic Bayes factor by Berger and Pericchi (1996a) \cite{berger1996intrinsic} and the expected posterior prior by Perez and Berger (2002) \cite{perez2002expectedposterior}, have employed training samples to transition improper objective priors into more suitable distributions for model selection. %The customary technique involves selecting the smallest possible training samples, named minimal training samples (MTS), to ensure the posteriors' propriety. The dependence on an arbitrary MTS can be mitigated by averaging over the entirety of potential MTS, averaging over a smaller set of randomized MTS, and by other strategies.

A challenge often faced is the indeterminacy of classical Bayes Factors, due to the dependence on an undefined ratio of constants derived from the improper priors. To circumnavigate this, Jeffrey suggested conventional proper priors for the extra-parameters under the larger hypothesis, albeit improper for common parameters. Various techniques have been proposed to address this issue, including those by Smith and Spiegelhalter (1980) \cite{smith1980bayesfactors} and Spiegelhalter and Smith (1982) \cite{spiegelhalter1982bayesfactors}. Though approximate, these techniques facilitate sensible scaling for Bayes Factors and bear a direct relationship with more recent approaches like the Intrinsic Bayes Factor, the Intrinsic Priors, and EP-priors. To elucidate these connections, we will re-conceptualize Smith and Spiegelhalter's approach and the associated Bayes Factor, hereby referred to as SSBF's ($B^{SS}$), drawing parallels to other methodologies.

Chapter three specifically focuses on the quasi-Bayes factor, which emerges as a universal lower bound for the Intrinsic Bayes factor. In Chapter four we introduce the concept of SP Priors (Superior Posterior priors) that either generate the Intrinsic Bayes Factors bound under specific assumptions or serve as an asymptotic approximation of the bounds. These priors are the least favorable priors for the null hypothesis and through chapter three to chapter eight we performed numerical simulations and verified the consistency of the SP, the empirical and the theoretical Intrinsic Bayes Factor bounds across various essential distributions such as the normal, exponential, Poisson, geometric, and negative binomial. Furthermore, we calculated the bounds for two nested normal-linear models and ANOVA. We discussed model selection problems across different distributions and hypothesis tests such as nested and separated hypotheses, and for those cases we provided closed forms for the Intrinsic Bayes factors Bounds, the SP Priors, and the SP Bayes Factors. 
%A variety of strategies have been developed to overcome the limitation of minimal training samples, and in this paper we will outline some of these strategies. The generalizations of training samples considered herein can alternatively be viewed as choosing training samples in a random fashion, or as providing a ?weighting? to chosen training samples. One particularly interesting example is a sequential random minimal training sample, which is a training sample of smallest size such that the posterior is proper, but which is obtained by drawing observations randomly, without replacement, from the set of data. Another natural use of random training samples is when the original data is not available, but sufficient statistics are given; training samples can then be generated from the conditional distribution of the data, given the sufficient statistics. 

%In the next section, the model selection problem is stated, and intrinsic Bayes factors, and expected posterior priors are defined. 
%Section 1.3 discusses the key problem that arises, which can be best understood through the device of studying the intrinsic priors corresponding to intrinsic Bayes
%factors; these are the priors that, if used directly to compute Bayes factors, would yield (in an asymptotic sense) the same answers as the intrinsic Bayes
%factors. 
%As further discussed in Berger and Pericchi (2001), we feel this to be a powerful unifying approach to understanding the performance of default Bayes factors.
\chapter{LITERATURE REVIEW}  % En Mayúsculas (In Caps)
\section{Bayes factors}
\hspace{10pt}Traditional point estimation methods, such as the maximum likelihood estimator (MLE) or the method of moments (MoM), typically involve the utilization of specific estimates for each model parameter based on the available data. In contrast, Bayesian inference takes a different approach by assigning a probability distribution to each model parameter.

Let $\textbf{y} = (y_1,...,y_n)$ represent a set of $n$ random samples obtained from a random variable $Y$ with a density $f_i(y|\theta_i)$, where $\theta_i$ denotes a vector of parameters associated with model $M_i$. Employing the Bayes' Theorem, the posterior distribution for model $M_i$ can be expressed as:
\[
\pi(\boldsymbol\theta_i|\textbf{y}) = \displaystyle\frac{f_i(y|\theta))\pi_i(\boldsymbol\theta_i)}{m_i(\textbf{y})} = \displaystyle\frac{f_i(\textbf{y}|\theta_i)\pi(\boldsymbol\theta_i)}{\displaystyle\int f_i(\textbf{y}|\theta_i)\pi(\boldsymbol\theta_i)d\boldsymbol\theta_i} 
\]
where $m_i(\textbf{y})$ is called the marginal, evidence, or global likelihood, and $\pi_i(\boldsymbol\theta)$ is the prior distribution of the parameters for $M_i$. 
Suppose that we are comparing two models for the data \textbf{y}
\[
M_k: \textbf{Y} \mbox{ has a density } f_k(\textbf{y}|\boldsymbol\theta_k) \hspace{5pt} k = i,j
\]
where $\boldsymbol\theta_k$ are unknown model parameters associated with model $M_k$. Suppose we have available prior distributions $\pi_k(\boldsymbol\theta_k)$ for the unknown parameters. Define the marginal or predictive densities of \textbf{y} as
\[
m_i(\textbf{y}) = \int f_i(\textbf{y}|\boldsymbol\theta_i)\pi_i(\boldsymbol\theta_i)d\boldsymbol\theta_i.
\]
The \emph{Bayes factor} of $M_i$ to $M_j$ is given by 
\[
B_{ij} = \displaystyle\frac{m_i(\textbf{y})}{m_j(\textbf{y})} = \displaystyle\frac{\displaystyle\int f_i(\textbf{y}|\boldsymbol\theta_i)\pi_i(\boldsymbol\theta_i)d\boldsymbol\theta_i}{\displaystyle\int f_j(\textbf{y}|\boldsymbol\theta_j)\pi_j(\boldsymbol\theta_j)d\boldsymbol\theta_j}.
\]
The \emph{Bayes factor} is often interpreted as the evidence provided by the data for the model $M_i$ vs. the alternative model $M_j$, but the \emph{Bayes factor} depends also on the \emph{priors}. Alternatively, $B_{ij}$ can be interpreted as the weighted likelihood ratio of $M_i$ to $M_j$.
with the priors being the "weighting functions."\\
\section{Prior and Posterior Model Probabilities }
\hspace{10pt}Suppose that prior probabilities $\pi(M_i), i = 1,...,n$, of the $n$ models, are available, then we can obtain for each model $M_i$ the posterior probability from the Bayes factor given the data $\textbf{y}$ by calculating
$$ P(M_i|\textbf{y}) = \frac{\pi(M_i)m_i(\textbf{y})}{\displaystyle\sum_{k=1}^{n}\pi(M_k)m_k(\textbf{y})}$$ 
This equation can also be written as
$$ P(M_i|\textbf{y}) =\Big[\sum_{k=1}^{n}\frac{\pi(M_k)}{\pi(M_i)}B_{ki}(\textbf{y})\Big]^{-1}$$
where $O_{ki} =\pi(M_k)/\pi(M_i)$ is known as the prior odds of the model $M_k$ over the model $M_i$, and the posterior probabilities odds are given by
$$ \frac{P(M_i|\textbf{y})}{P(M_j|\textbf{y})} = \frac{\pi(M_i)m_i(\textbf{y})}{\pi(M_j)m_j(\textbf{y})} = O_{ij}B_{ij}$$ 
Hence, the posterior odds are given the product of the prior odds and the Bayes factor. A common unbiased choice of the prior model probabilities is given by $\pi(M_i) = 1 / n$ where each model has the same prior probability. Note that using those uniform priors we can obtain the re-normalized marginal distributions from the posterior model probabilities as
$$\overline{m}_i(\textbf{y}) = \frac{m_i(\textbf{y})}{\sum_{k=1}^{n}m_k(\textbf{y})} $$
and also the Bayes Factor can be in terms of the re-normalized marginals as  
$$ B_{ij}(\textbf{y}) = \frac{\overline{m}_{i}(\textbf{y})}{\overline{m}_j(\textbf{y})}$$ 
\section{Jeffreys's objective priors}
\hspace{10pt}Consider the Fisher information matrix when there are n parameters, so that $\boldsymbol\theta$ is a $n\times1$ vector $\boldsymbol\theta = \begin{bmatrix} \theta_{1}, \theta_{2}, \dots , \theta_{n}\end{bmatrix}^{\mathrm T}$, then the Fisher information takes the form of an $n\times n$ matrix with typical element
\[
{I_{ij} \left(\theta \right) = \operatorname{E} \left[\left. \left(\frac{\partial}{\partial\theta_i} \log f(\textbf{y}|\boldsymbol\theta)\right) \left(\frac{\partial}{\partial\theta_j} \log f(\textbf{y}|\boldsymbol\theta)\right) \right|\boldsymbol\theta\right]}. 
\]

Jeffreys prior is a non-informative (objective) prior distribution for a parameter space; it is proportional to the square root of the determinant of the Fisher information matrix
\[
\pi^{N}\left(\boldsymbol\theta\right) \propto \sqrt{\det I\left(\boldsymbol\theta\right)}\,
\]
and this prior is invariant under reparameterization of the parameter vector $\boldsymbol\theta$ and this property makes it of special interest for use with location and scale parameters.

\section{Objective Bayes Factors}
\hspace{10pt}The unscaled Bayes factors are obtained using the non-informative objective priors $\pi_i^N(\theta_i)$ and $\pi_j^{N}(\theta_j)$ for the models $M_i$ and $M_j$ respectively and can be calculated as 
$$ B^{N}_{ij}(\textbf{y}) = \frac{\displaystyle\int f_i(\textbf{y}|\theta_i)\pi_i^{N}(\theta_i)d\theta_i}{\displaystyle\int f_j(\textbf{y}|\theta_j)\pi_j^{N}(\theta_j)d\theta_j}$$

Non-informative priors can be used for one-sided hypothesis testing but unfortunately, when we are using improper priors for point null hypotheses the Bayes Factor depends on an undetermined ratio of constants $C_i/C_j$, which comes from the improper priors. Some techniques are widely used to overcome this issue which can approximate the desired ratio of constants and those will be studied and extended in the next sections. \\
\section{Minimal training samples}
\hspace{10pt}There are different approaches to calculate Bayes Factors based on the concept of a proper 'minimal training sample' which is a subset of the entire data $\textbf{y}$.
\\
$\textbf{Definition}$: A training sample $\textbf{y}(\ell) \subset \textbf{y}$, is called proper if $0 < m(\textbf{y}(\ell)) < \infty$, and minimal if it is proper and no subset is proper.\\
\subsection{A formal definition for the set of all minimal training samples}
The formal definition and formula for the set of all possible minimal training samples can be obtained using the definition above and extending the concept. Let $y(\ell_i)=y(i_1,i_2,...,i_{k}) = \{y_{i_1},y_{i_2},...,y_{i_{k}}\}$ be any minimal training sample with $k \leq n$. The set $\{y(\ell_1),...,y(\ell_L)\}$ of all possible minimal training samples (non-repeated subsets of size $k$) can be defined as the set
$$D_{n,k} = \Bigg\{ \Big\{\big\{\cdot \cdot \cdot\{y(i_1,i_2,...,i_{k})\}_{i_{k}= i_{(k-1)}}^{i_n}\cdot\cdot\cdot \big\}^{i_n}_{i_2 =i_1}\Big\}_{i_1 = 1}^{i_n} : 1 \leq i_1 < i_2 < i_3< \dots < i_{k} \leq n\Bigg\}$$
Hence,
$$|D_{n,k}| = {n\choose k} = L \mbox{ \hspace{10pt} and \hspace{10pt} } D_{\infty,k} = \lim_{n\to \infty}D_{n,k}$$
Therefore,
$$ |D_{\infty,k}| = |\lim_{n\to\infty}D_{n,k}| = \lim_{n\to\infty} |D_{n,k}| =  \lim_{n\to\infty} \frac{(n)(n-1)(n-(k+1))}{k!} = \infty$$
For ease of notation, we will denote the set $D_{\infty,k}$ as $D$ throughout this thesis.

The determination of the set $D_{n,k}$, encompassing all minimal training samples of size $k$ from a sample of size $n$, will be facilitated by an original, exhaustive, albeit sub-optimal algorithm. This algorithm yields the computation of Bayes factors involving these terms?such as the Arithmetic Intrinsic Bayes Factor, Median Intrinsic Bayes Factor, EP-Bayes factors, and SS-Bayes factors?which will be expounded upon in subsequent chapters.

Similarly, considering $L = $ ${n}\choose{k}$,, we can derive the following computationally intuitive formula applicable to the Arithmetic Intrinsic Bayes Factor (AIBF)
$$\boxed{\frac{1}{L}\displaystyle\sum_{i_1=1}^{i_n}\sum_{i_2 = i_1}^{i_n}\cdot\cdot\cdot\sum_{i_{k}=i_{(k - 1)}}^{i_n}\mathds{1}(1 \leq i_1 < i_2 < \dots < i_{k}\leq n)B_{01}(y(i_1,...,i_{k})) = \frac{1}{L}\sum_{d \in D_{n,k}}B_{01}(d). }
$$
In the next section, we will present the general definition algorithm that can receive a vector of observations, and will generate the set of all possible minimal training samples.\\
\subsection{An algorithm to generate the set of all minimal training samples}
\hspace{10pt} We present here an original algorithm devised for the purpose of generating the set of all minimal training samples. It is important to note that the algorithm proposed in this study may not be suitable for processing very large sample sizes due to its computational complexity, which amounts to approximately $O(n^k)$ operations. For future investigations, it is recommended to explore more efficient algorithms for generating the matrix $\textbf{M}$ based on the algorithm discussed herein.

Alternatively, a generalized version of this algorithm can be constructed using a recurrence relation instead of nested for loops. Such an approach would enable the development of a versatile algorithm applicable to any given values of $n$ and $k$. In cases where the sample sizes are substantial, it is advisable to employ randomization techniques to generate a representative subset of minimal training samples, which can yield satisfactory approximations of the Intrinsic Bayes Factors (IBFs). 
\begin{algorithm}
	\caption{An algorithm to produce the set of all possible minimal training samples}\label{alg:cap}
	\begin{algorithmic}
		\Require{$n,k \in \mathbb{N}, n\geq k$}
		\Ensure{A matrix with k columns and ${n}\choose{k}$ rows with the indexes of all of the possible minimal training samples of size k from a sample of size n.}
		\Function{MTS}{$(n,k)$}
		\\ Initialize an empty matrix M
		\For{$i_1 \mbox { in } 1:n$} {
			\For{$i_2 \mbox{ in } i_1:n$}{
				\\
				\hspace{50pt}\vdots
				\For{$i_k \mbox { in } i_{k-1}:n$}{
					\If{$1 \leq i_1 < i_2 < ... < i_k \leq n$}
					\State Add the row with values $(i_1,...,i_k)$ to the bottom of the matrix M. 
					\EndIf
				}
				\EndFor
				\\
				\hspace{50pt}\vdots
			}
			\EndFor
		}
		\EndFor
		\\ Return M
		\EndFunction
	\end{algorithmic}
\end{algorithm}
\newpage
\section{The Intrinsic Bayes Factor}
\hspace{10pt}Suppose that non-informative (usually improper) priors $\pi_i^{N}(\boldsymbol\theta_i), i = 1,...,n$ are available for the models $M_1,...M_n$ respectively. The general recommendation is that we choose those priors as \textquotedbl reference priors" (Berger and Bernardo 1992 \cite{berger1992ordered}), but there are many other choices and approximations with good results and advantages over the reference priors in terms of computational and mathematical complexity.  \\
\indent The \emph{Intrinsic Bayes Factor} (IBF) is calculated using a minimal training sample $\textbf{y}(\ell)$ which converts the improper prior $\pi_i^N(\boldsymbol{\theta}_i)$ to proper posteriors $\pi_i^N(\boldsymbol{\theta}_i|\textbf{y}(\ell))$. The Bayes factor is calculated using this proper posterior as the prior distribution of the remaining of the data $\textbf{y}(-\ell) = \textbf{y}\setminus \textbf{y}(\ell)$ as follows

$$
\boxed{
	B_{ij}^{I} = B_{ij}(\ell) = \frac{\displaystyle\int f_i(\textbf{y}(-\ell)|\boldsymbol{\theta}_i,\textbf{y}(\ell))\pi_i(\boldsymbol{\theta}_i|\textbf{y}(\ell))d\boldsymbol{\theta}_i}{\displaystyle\int f_j(\textbf{y}(-\ell)|\boldsymbol{\theta}_j,\textbf{y}(\ell))\pi_j(\boldsymbol{\theta}_i|\textbf{y}(\ell))d\boldsymbol{\theta}_j}
}
$$

where $B_{ij}(\ell) = B_{ij}(\textbf{y}(-\ell)|\textbf{y}(\ell))$. There are $L = {n \choose k}$ different minimal training samples for $n$ i.i.d samples, where $k$ is the number of parameters of the density under consideration. Denote by $B^N_{ij}(\mathbf{y})$, the Bayes Factor with the whole sample $\mathbf{y}$ and un-scaled objective (Non-Informative) priors $\pi^N_k(\boldsymbol{\theta}_k), k=i,j$, and,
$B_{ij}(\mathbf{y(-\ell)}|\mathbf{y(\ell)})$, the "trained" Bayes Factor after the training sample has been employed to get a proper scaling of the Bayes Factor. A fundamental Lemma with Bayes Factors is the following (Berger and Pericchi(1996) \cite{berger1996intrinsic}\\
\begin{lemma} Let $\textbf{y} = \{y_1,...,y_n\}$ be independent and identically distributed random samples, and assume that $\textbf{y}(\ell) \in D$ with $\textbf{y}(-\ell)\cup \textbf{y}(\ell) = \textbf{y}$, then 
	\begin{equation}
		\mbox{      }
		B_{ij}(\ell)=B^N_{ij}(\mathbf{y}) B^N_{ji}(\mathbf{y(\ell)}), \forall \ell.
		\label{LemmaTrainedBF}
	\end{equation}
\end{lemma}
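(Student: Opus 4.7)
The plan is to unfold each term in the identity into its defining marginal-likelihood form and show that the constants $C_i/C_j$ that plague the objective Bayes factor cancel cleanly. Concretely, I would begin by writing the trained prior via Bayes' rule as
\[
\pi_i^N(\boldsymbol\theta_i\mid \mathbf{y}(\ell)) \;=\; \frac{f_i(\mathbf{y}(\ell)\mid\boldsymbol\theta_i)\,\pi_i^N(\boldsymbol\theta_i)}{m_i^N(\mathbf{y}(\ell))},
\]
and using the i.i.d. assumption factor the full-data density as $f_i(\mathbf{y}\mid\boldsymbol\theta_i)=f_i(\mathbf{y}(-\ell)\mid\boldsymbol\theta_i)\,f_i(\mathbf{y}(\ell)\mid\boldsymbol\theta_i)$, so that the conditional $f_i(\mathbf{y}(-\ell)\mid\boldsymbol\theta_i,\mathbf{y}(\ell))$ appearing in the definition of $B_{ij}(\ell)$ collapses to $f_i(\mathbf{y}(-\ell)\mid\boldsymbol\theta_i)$.

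Substituting both of these into the numerator of $B_{ij}(\ell)$ yields
\[
\int f_i(\mathbf{y}(-\ell)\mid\boldsymbol\theta_i)\,\frac{f_i(\mathbf{y}(\ell)\mid\boldsymbol\theta_i)\,\pi_i^N(\boldsymbol\theta_i)}{m_i^N(\mathbf{y}(\ell))}\,d\boldsymbol\theta_i
\;=\; \frac{m_i^N(\mathbf{y})}{m_i^N(\mathbf{y}(\ell))},
\]
and the analogous identity holds for the denominator with $j$ in place of $i$. Taking the ratio gives
\[
B_{ij}(\ell) \;=\; \frac{m_i^N(\mathbf{y})/m_i^N(\mathbf{y}(\ell))}{m_j^N(\mathbf{y})/m_j^N(\mathbf{y}(\ell))}
\;=\; \frac{m_i^N(\mathbf{y})}{m_j^N(\mathbf{y})}\cdot\frac{m_j^N(\mathbf{y}(\ell))}{m_i^N(\mathbf{y}(\ell))}
\;=\; B_{ij}^N(\mathbf{y})\,B_{ji}^N(\mathbf{y}(\ell)),
\]
which is exactly equation \eqref{LemmaTrainedBF}.

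There is really no hard step here; the whole argument is a two-line bookkeeping exercise once one recognizes the telescoping of the marginals. The only point that deserves care is the legitimacy of factoring $f_i(\mathbf{y}\mid\boldsymbol\theta_i)$ into the $\mathbf{y}(\ell)$ and $\mathbf{y}(-\ell)$ pieces: this is where the i.i.d. hypothesis in the lemma is genuinely used, and it is also what lets us suppress the conditioning on $\mathbf{y}(\ell)$ inside $f_i(\mathbf{y}(-\ell)\mid\boldsymbol\theta_i,\mathbf{y}(\ell))$. A secondary technicality worth flagging is that $m_i^N(\mathbf{y}(\ell))$ and $m_j^N(\mathbf{y}(\ell))$ must be finite and nonzero for the algebra to make sense, but this is guaranteed by the assumption $\mathbf{y}(\ell)\in D$, i.e.\ that $\mathbf{y}(\ell)$ is a proper (minimal) training sample for both models. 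No other machinery is needed.
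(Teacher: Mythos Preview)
Your proof is correct and follows essentially the same route as the paper: expand the trained prior by Bayes' rule, use the i.i.d.\ factorization $f_k(\mathbf{y}\mid\boldsymbol\theta_k)=f_k(\mathbf{y}(\ell)\mid\boldsymbol\theta_k)\,f_k(\mathbf{y}(-\ell)\mid\boldsymbol\theta_k)$, and regroup the resulting marginals into $B_{ij}^N(\mathbf{y})\,B_{ji}^N(\mathbf{y}(\ell))$. Your version is arguably tidier in that you explicitly flag why the i.i.d.\ assumption is needed and why $\mathbf{y}(\ell)\in D$ guarantees the marginals are finite and nonzero, points the paper's proof leaves implicit.
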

\begin{proof} The Bayes Factor of the remaining samples given the training samples can be written as 
	$$ B_{ij}(\mathbf{y(-\ell)}|\mathbf{y(\ell)}) = \frac{m_i(\mathbf{y(-\ell)}|\mathbf{y(\ell)})}{m_j(\mathbf{y(-\ell)}|\mathbf{y(\ell)})} =  
	\frac{\displaystyle\int f_i(\mathbf{y(-\ell)}|\theta_i)\pi(\theta_i|\mathbf{y(\ell)})d\theta_i }
	{\displaystyle\int f_i(\mathbf{y(-\ell)}|\theta_j)\pi(\theta_j|\mathbf{y(\ell)})d\theta_j }
	$$ 
	The proper posterior distribution of the parameters given the MTS can be written as
	$$\pi(\theta_k|\mathbf{y(\ell)}) = \frac{f_k(\theta_k|\mathbf{y(\ell)})\pi_k^{N}(\theta_k)}{m_k^{N}(\mathbf{y(\ell)})}, \mbox{ for } k = i,j$$
	Hence, 
	$$\frac{\displaystyle\int f_i(\mathbf{y(-\ell)}|\theta_i)\pi(\theta_i|\mathbf{y(\ell)})d\theta_i }
	{\displaystyle\int f_j(\mathbf{y(-\ell)}|\theta_j)\pi(\theta_j|\mathbf{y(\ell)})d\theta_j }
	= \frac{\displaystyle\int f_i(\mathbf{y(-\ell)}|\theta_i)\frac{f_i(\theta_i|\mathbf{y(\ell)})\pi_i^{N}(\theta_i)}{m_i^{N}(\mathbf{y(\ell)})}d\theta_i }
	{\displaystyle\int f_j(\mathbf{y(-\ell)}|\theta_j)\frac{f_j(\theta_j|\mathbf{y(\ell)})\pi_j^{N}(\theta_j)}{m_j^{N}(\mathbf{y(\ell)})}d\theta_j }$$
	\begin{equation}
		= \frac{m_j^N(\mathbf{y(\ell)})}{m_i^N(\mathbf{y(\ell)})}\frac{\displaystyle\int f_i(\mathbf{y(-\ell)}|\theta_i)f_i(\theta_i|\mathbf{y(\ell)})\pi_i^{N}(\theta_i)d\theta_i }
		{\displaystyle\int f_j(\mathbf{y(-\ell)}|\theta_j)f_j(\theta_j|\mathbf{y(\ell)})\pi_j^{N}(\theta_j)d\theta_j }
	\end{equation}
	Since $f_k(\mathbf{y}(\ell)|\boldsymbol{\theta}_k)f_k(\mathbf{y}(-\ell)|\boldsymbol{\theta}_k) = f_k(\mathbf{y}|\boldsymbol{\theta}_k) \mbox{ for } k = i,j$, the ratio of the marginals is
	$$ \frac{m_j^N(\mathbf{y(\ell)})}{m_i^N(\mathbf{y(\ell)})} = B_{ji}^{N}(\mathbf{y(\ell)})$$ 
	Hence, equation (2) becomes
	$$B_{ji}^{N}(\mathbf{y(\ell)})\frac{\displaystyle\int f_i(\mathbf{y}|\theta_i)\pi_i^{N}(\theta_i)d\theta_i }
	{\displaystyle\int f_j(\mathbf{y}|\theta_j)\pi_j^{N}(\theta_j)d\theta_j } = B_{ji}^{N}(\mathbf{y(\ell)})B_{ij}^{N}(\mathbf{y}).
	$$
\end{proof}
By employing the methodology elucidated in the aforementioned lemmas, we have successfully mitigated the reliance of $B_{ij}$ on the scales of the priors, thereby resolving the issue of undetermined constants associated with the Bayes factor. However, a new dependence arises, contingent upon the arbitrary selection of the minimal training sample denoted as $\textbf{y}(\ell)$. To address this dependency and enhance stability, we propose to perform an "average" of $B_{ij}(l)$ over all feasible training samples $\textbf{y}(\ell)$, where $\ell = 1,...,L$.\\
\section{The Expected Posterior Bayes Factors}
\hspace{10pt}Suppose that non-informative (typically improper) priors $\pi_i^{N}(\boldsymbol\theta_i)$, where $i = 1, \ldots, n$, associated with the models $M_1, \ldots, M_n$, respectively are available. The concept of Expected Posterior (EP) priors, introduced by Perez and Berger in 2002 \cite{perez2002expectedposterior}, allows for the calculation of proper posteriors $\pi_i^N(\boldsymbol{\theta}_i|\textbf{y}(\ell))$ using a minimal training sample $\textbf{y}(\ell)$. The EP prior for model $M_i$ is then defined as:

$$ \boxed { \pi_{i}^{EP}(\theta_i) = \frac{1}{L}\displaystyle\sum_{\ell=1}^{L}\pi_i^N(\boldsymbol{\theta}_i|\textbf{y}(\ell)) } $$

The \emph{EP Bayes factor}, which enables the comparison between models $M_i$ and $M_j$, is computed as follows:
$$
\boxed{ 
B_{ij}^{EP}(\textbf{y}) = \frac{\displaystyle\int f_i(\textbf{y}|\boldsymbol{\theta}_i)\pi_i^{EP}(\boldsymbol{\theta}_i)d\boldsymbol{\theta}_i}{\displaystyle\int f_j(\textbf{y}|\boldsymbol{\theta}_j)\pi^{EP}_j(\boldsymbol{\theta}_j)d\boldsymbol{\theta}_j}
}
$$
In the above equations, $f_i(\textbf{y}|\boldsymbol{\theta}_i)$ and $f_j(\textbf{y}|\boldsymbol{\theta}_j)$ represent the likelihood functions associated with models $M_i$ and $M_j$, respectively. The EP prior, derived from improper priors using the minimal training sample $\textbf{y}(\ell)$, allows for the calculation of the EP Bayes factor, facilitating model comparison and selection.
\chapter{A UNIVERSAL ROUBUST BOUND}
\section{The Smith and Spiegelhalter's method}
\hspace{10pt}This chapter presents an innovative methodology for computing Bayes Factors, building upon the seminal work of Spiegelhalter and Smith (1982) \cite{spiegelhalter1982bayesfactors}. Their approach, which encompassed examining an alternative prior specification, could be interpreted as a genuine subjective Bayesian analysis based on specific prior beliefs, or alternatively, as a formal analysis serving as a theoretical tool for comparing a model $M_0$ with a localized subset of models within another model $M_1$. Spiegelhalter and Smith employed the concept of "imaginary training samples" to maximize support for the simpler model $M_0$. While their paper primarily focused on assigning the constant $c_0/c_1$ in equation (5), Lempers (1971) \cite{lempers1971posterior} and Atkinson (1978) \cite{atkinson1978posterior} emphasized that assigning vague prior information directly to $c_0$ and $c_1$ is inherently arbitrary without incorporating a training sample to determine appropriate variable scaling. Consequently, they proposed partitioning a portion of the data for a pre-comparison inference phase, enabling the formation of suitable priors for the parameters within each model.

The method suggested by Spiegelhalter and Smith involved determining $\textbf{y}_0$ as the argument that maximizes ${m_0^N(\textbf{y}(\ell))}/{m_1^N(\textbf{y}(\ell))}$ and subsequently solving the equation 
$$c B_{01}^N(\textbf{y}_0) = 1.$$
This approach yielded the Bayes factor expressed as
$$\boxed{B^{SS}_{10}(\textbf{y}) = cB^N_{10}(\textbf{y}) = B^N_{01}(\textbf{y}_0)B_{10}^N(\textbf{y}).}$$

While Spiegelhalter and Smith's technique focused exclusively on nested point hypotheses, the upcoming sections will expand upon this method to encompass more general cases and examples. Furthermore, a novel lower bound will be introduced, leveraging the concept of "imaginary training samples" to maximize support for $M_0$. However, there is a distinction between this proposed method and the aforementioned approach by Smith and Spiegelhalter. Our method does not concern itself with the constant $c$; instead, we aim to establish a lower bound for various forms of Intrinsic Bayes Factors, including the arithmetic, geometric, expected and harmonic. This bound encompasses both theoretical and empirical versions, with the empirical version providing the tightest bound. The subsequent section will outline the procedures for obtaining both theoretical and empirical bounds.\\
\section{The universal robust bound for the Intrinsic Bayes Factors} 
\hspace{15pt} In this study, we propose a novel method that generates a lower bound for the Bayes factor and corresponding priors, employing an approach that is closely related to the concept of Intrinsic Bayes Factors and the technique employed by Spiegelhalter and Smith. Let $\textbf{y}(\ell)$ denote a minimal training sample, and $\pi(\boldsymbol{\theta_i}|\textbf y(\ell))$ represent the posterior distribution of parameter $\boldsymbol{\theta}_i$ given the minimal training sample for two models, $M_i$ and $M_j$. Our aim is to compare models $M_i$ and $M_j$. To achieve this comparison, we define the set $D$ as the aggregate of all possible minimal training samples with size $k$, where $k$ is determined as the maximum of $\{\dim(\boldsymbol{\theta_i}),\dim(\boldsymbol{\theta_j})\}$.
\begin{definition}
	The Intrinsic Bayes Factor Lower Bound  ($\underline{B}^{I}_{01}$), is the theoretical lower bound of all possible Intrinsic Bayes Factors obtained by computing the infimum over the set of all possible imaginary training samples D.
\end{definition}
\begin{definition}
	The Empirical Intrinsic Bayes Factor Lower Bound ($\underline{B}^{I^{*}}_{01}$) is the empirical lower bound obtained by computing the minimum over the set of all possible empirical training samples $D_{n,k}$.
\end{definition}
\section{A bridge between the universal bound and the IBF}
In this section, we will explore various approaches to construct lower bounds for the Intrinsic Bayes factor, ultimately contributing to the field of model comparison and selection. There is a bridge between IBF and the Least Favorable Intrinsic Bayes Factor approach that we want to explore by considering the AIBF, defined as,
\begin{equation}\boxed{
		B^{AI}_{10}(\textbf{y})=B^N_{10}(\mathbf{y}) \times \frac{1}{L}\sum_{\ell = 1}^{L} B^N_{01}(\mathbf{y(\ell)})}
\end{equation}
where $L={n \choose m}$, where m is the minimal training sample size.  The AIBF is asymmetric the more complex model has to be placed above to guarantee the convergence.
\begin{equation}\label{AIBF10}
	\boxed{ 
		\overline{B}^{I}_{10} := B^N_{10}\sup_{\textbf{y}(\ell) \in D} B_{01}(\textbf{y}(\ell)) \geq B^N_{10}\frac{1}{L}\sum B_{01}(\textbf{y}(\ell)) = B^{A}_{10}.
	}
\end{equation}
By forming the reciprocals,  we have the important consequence for a bound on the probability of a null hypothesis:
\begin{equation}\label{AIBF01}
	\boxed{ 
		\underline{B}^{I}_{01} := B^N_{01} \inf_{\ell} B_{10}(\textbf{y}(\ell)) = B^N_{01} \frac{1}{\sup_{\textbf{y}(\ell) \in D} B_{01}(\textbf{y}(\ell))} \leq B^N_{01} \frac{1}{\frac{1}{L}\sum B_{01}(\textbf{y}(\ell))}=B^{AI}_{01}.
	}
\end{equation}
where $\textbf{y}(\ell)$ is a theoretical training sample from the set of all possible training samples. By construction, all Bayes Factors obey: $B_{01}=1/B_{10}$ and $\overline{B}^{I}_{10} \geq B^{AI}_{10}$, since the supremum is necessarily bigger or equal than the arithmetic average. On the other hand, for generalizing the bound we may consider another bound based on empirical training samples,
\begin{equation}\label{ESS}
	B^{AI}_{01}\geq B^N_{01} \inf_{\textbf{y}(\ell) \in D_{n,k}} B_{10}(\textbf{y}(\ell)) = \underline{B}^{I^{*}}_{01},
\end{equation}
where $\textbf{y}(\ell)$ are empirical real training samples, and $\underline{B}_{01}^{I^{*}}$ is formed over all the real training samples, of a minimal size $m$. 
Clearly, $\overline{B}^{I*}_{10} \le \overline{B}^{I}_{10}$ and $ B^{AIBF}_{01} \ge \underline{B}^{I*}_{01} \ge \underline{B}^{I}_{01}.$
Thus, $\underline{B}^{I^{*}}_{01}$ and $\underline{B}^{I}_{01}$ are the least favorable lower bounds on the Bayes Factor. In addition, the results obtained with the AIBF are valid for other related types of Intrinsic Bayes Factors.
\subsection{A comparison with the p-value based lower bound}
In this section, we aim to illustrate the superiority of our bound in comparison to the well-known p-value-based robust lower bound introduced by Sellke et al. (2001) \cite{sellke2001calibration}. To commence our discussion, we will first establish the definition of a p-value.
\begin{definition}
	A p-value $p(\textbf{Y})$ is a statistic satisfying $0 \leq p(\textbf{y}) \leq 1$ for every sample point $\textbf{y}$. Small values of p(\textbf{Y}) give evidence that $H_1: \theta \in \Theta_0^{c}$ is true, where $\Theta_{0}$ is some subset of the parameter space and $\Theta_{0}^{c}$ is its complement. A p-value is valid if, for every $\theta \in \Theta_0$ and every $0 \leq \alpha \leq 1$,
	$$P_\theta(p(\textbf{Y}) \leq \alpha) \leq \alpha.$$
\end{definition}
The asymptotic behavior of the p-value as the sample size tends toward infinity is contingent upon both the null hypothesis and the fundamental distribution of the test statistic. Generally speaking, in the context of a two-sided hypothesis test, if the null hypothesis holds true, the p-value follows a uniform distribution within the range [0,1]. This implies that, irrespective of the sample size, the p-value will be uniformly distributed provided that the null hypothesis is valid.

The \emph{Robust Lower Bound} offers a framework for calibrating p-values and is defined as follows:

\begin{equation}
	B_{01}^{L}(p)=
	\begin{cases}
		-ep\log(p) & \text{if } p < \frac{1}{e}\\
		1 & \text{if } p \geq \frac{1}{e}
	\end{cases}
\end{equation}
\\
We postulate that given any sample size $n$ and for a fixed $p$, our IBF Lower Bound as a function of the p-value, $\underline{B}^{I}_{01} (p)$, is superior to the robust lower bound $B_{01}^{L}(p)$. It can be mathematically expressed as
$$\underline{B}^{I}_{01} (p) \ge \underline{B}^{I}_{01} (p) \ge B_{01}^{L}(p).$$
The last part of this inequality was proven by Sellke et. al (2001) \cite{sellke2001calibration} and it should be noted that $\underline{B}^{I}_{01} (p)$ is dynamic, changing with the sample size, akin to a real Bayes Factor, whereas $B_{01}^{L}(p)$ remains static with $n$. In the next theorem, we use the Laplace approximation to show that the robust lower bound stays constant under $H_0$ while our lower bound converges to the correct decision as the sample size increases. The Laplace approximation is a method used to approximate a probability distribution with a normal distribution and it's valid under the following regularity conditions:
\begin{enumerate}
	\item \textbf{Smoothness of the Logarithm of the Density}: For a density function $f(y)$, if $\log(f(y))$ is twice continuously differentiable in a neighborhood around the mode of the distribution, then the Laplace approximation tends to work well.
	
	\item \textbf{Existence of a Unique Mode}: The distribution should have a single, well-defined mode within the region of interest.
	\item \textbf{Curvature of the Log Density}: The curvature (second derivative) of $\log(f(y))$ at the mode should not be zero. A non-zero curvature ensures that the distribution can be well-approximated by a quadratic form, which is the characteristic shape of a normal distribution.
	
	\item \textbf{Convergence to Normality}: As the sample size tends to infinity, the distribution should converge to a normal distribution in a suitable sense (like the Central Limit Theorem).
\end{enumerate}
These conditions, particularly the smoothness and curvature of the log density, are crucial for the Laplace approximation to accurately represent the shape of the distribution around its mode.

However, it's important to note that while the Laplace approximation is often effective for many distributions that meet these conditions, it might not work well for highly skewed or heavy-tailed distributions where the normality assumption might not hold. In such cases, other approximation methods or numerical techniques might be more appropriate. 
\newpage
\begin{theorem} Let $f_0(\textbf{y}|\boldsymbol{\theta_0})$ and $f_1(\textbf{y}|\boldsymbol{\theta_1})$ be the densities that satisfies regularity conditions (1)-(4). Let $H_0 \subset H_1$, $k_0 = dim(\mathbf{\theta}_0)$ and $k_1 = dim(\mathbf{\theta}_1)$ where $k_0 < k_1$, then under $H_0$
	$$\frac{B_{01}^{L}(p)}{\underline{B}_{01}^{I}}  \to 0 \mbox{ as } n \to \infty$$\end{theorem}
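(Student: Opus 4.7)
The plan is to exploit the Laplace (BIC-type) expansion to show that, under $H_0$, the denominator $\underline{B}_{01}^{I}$ diverges at a polynomial rate in $n$, while the numerator $B_{01}^{L}(p)$ is uniformly bounded by $1$. Using the representation in equation (\ref{AIBF01}), I would first rewrite
$$\underline{B}_{01}^{I} \;=\; \frac{B_{01}^{N}(\mathbf{y})}{\sup_{\mathbf{y}(\ell)\in D} B_{01}(\mathbf{y}(\ell))} \;=:\; \frac{B_{01}^{N}(\mathbf{y})}{S},$$
noting that since $D$ is the set of all \emph{imaginary} minimal training samples of fixed size $k=\max\{k_0,k_1\}=k_1$, the quantity $S$ is a deterministic constant that does not depend on the full-sample size $n$. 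The undetermined ratio of normalizing constants carried by the improper objective priors $\pi_i^{N}$ appears multiplicatively in both $B_{01}^{N}(\mathbf{y})$ and in each $B_{01}(\mathbf{y}(\ell))$ and therefore cancels in $\underline{B}_{01}^{I}$, by the same mechanism that underlies the trained Bayes-factor identity (\ref{LemmaTrainedBF}).

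Second, I would apply the Laplace approximation to both $m_0^{N}(\mathbf{y})$ and $m_1^{N}(\mathbf{y})$ under the regularity conditions (1)--(4). Standard BIC-type algebra yields
$$\log B_{01}^{N}(\mathbf{y}) \;=\; \bigl[\log f_0(\mathbf{y}|\hat{\theta}_0) - \log f_1(\mathbf{y}|\hat{\theta}_1)\bigr] \;+\; \frac{k_1-k_0}{2}\log n \;+\; O_p(1).$$
Under $H_0$, Wilks' theorem forces $2[\log f_1(\mathbf{y}|\hat{\theta}_1) - \log f_0(\mathbf{y}|\hat{\theta}_0)]$ to converge in distribution to a $\chi^2_{k_1-k_0}$ law, so the bracketed likelihood-ratio term is $O_p(1)$. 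Hence $\log B_{01}^{N}(\mathbf{y}) = \tfrac{k_1-k_0}{2}\log n + O_p(1)$, which gives $B_{01}^{N}(\mathbf{y}) \asymp n^{(k_1-k_0)/2}$ in probability, and consequently $\underline{B}_{01}^{I} = B_{01}^{N}(\mathbf{y})/S$ diverges at the same polynomial rate under $H_0$.

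Finally, for every $p\in(0,1]$ one has $-ep\log p \le 1$ with equality only at $p=1/e$, and the piecewise definition sets $B_{01}^{L}(p)=1$ for $p\ge 1/e$; hence $B_{01}^{L}(p)\le 1$ \emph{deterministically}, with no dependence on $n$. Combining,
$$\frac{B_{01}^{L}(p)}{\underline{B}_{01}^{I}} \;\le\; \frac{S}{B_{01}^{N}(\mathbf{y})} \;=\; O_p\!\bigl(n^{-(k_1-k_0)/2}\bigr),$$
which converges to $0$ in probability under $H_0$, giving the claim. The main obstacle is controlling $S$: one must verify that $S<\infty$, or at least $S=o(n^{(k_1-k_0)/2})$. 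In the regular nested models targeted by assumptions (1)--(4), $B_{01}(\cdot)$ viewed as a function of a $k_1$-tuple is continuous and attains a finite maximum on the effective support where both $m_0^{N}$ and $m_1^{N}$ are positive and finite, so $S<\infty$ is essentially automatic; but for heavy-tailed or weakly identified models this step must be established separately, typically by restricting the supremum to a typical set of $k_1$-tuples or by a uniform-integrability argument on the likelihood ratio.
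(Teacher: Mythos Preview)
Your proposal is correct and follows essentially the same route as the paper's proof: both arguments bound the training-sample correction factor by a constant independent of $n$, invoke the Laplace/BIC expansion to obtain $B_{01}^{N}(\mathbf{y}) \asymp n^{(k_1-k_0)/2}$, use Wilks' theorem under $H_0$ to keep the likelihood-ratio contribution $O_p(1)$, and finish by combining $B_{01}^{L}(p)\le 1$ with a squeeze argument. Your explicit discussion of the finiteness of $S=\sup_{\mathbf{y}(\ell)\in D}B_{01}(\mathbf{y}(\ell))$ is actually more careful than the paper, which simply posits the existence of constants $0<b\le B_{10}^{N}(\mathbf{y}^{*}(\ell))\le c<\infty$ for proper minimal training samples without further justification.
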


\begin{proof}
	For a proper minimal training sample $y^{*}(\ell), \exists b,c$ such that $ 0 < b \leq c < \infty$ and
	$$0 < b \leq B_{10}^N(\textbf{y}^{*}(\ell)) \leq c $$
	The Laplace approximation for $B_{01}^N(\textbf{y})$ (Berger \& Pericchi (2001) \cite{berger2001objective}) yields
	$$\Big(\frac{f_0(\textbf{y}|\hat{\theta}_0)|\hat{I}_{0}|^{-1/2}}{f_1(\textbf{y}|\hat{\theta}_1)|\hat{I}_{1}|^{-1/2}}\cdot \frac{\pi^{N}_0(\hat{\theta}_0)}{\pi^{N}_1(\hat{\theta}_1)}\Big) \cdot b \leq B_{01}^{N}(\textbf{y}) B_{10}^{N}(\textbf{y}^{*}(\ell)) \leq \Big(\frac{f_0(\textbf{y}|\hat{\theta}_0)|\hat{I}_{0}|^{-1/2}}{f_1(\textbf{y}|\hat{\theta}_1)|\hat{I}_{1}|^{-1/2}}\cdot \frac{\pi^{N}_0(\hat{\theta}_0)}{\pi^{N}_1(\hat{\theta}_1)}\Big) \cdot c$$
	
	where $\hat{I}_i$ and $\hat{\theta}_i$ are the observed information matrix and m.l.e, respectively under model $M_i$. The BIC approximation for $\underline{B}_{01}^{I}$ gives us
	
	$$ b\cdot\frac{f_0(\textbf{y}|\hat{\theta}_{0})}{f_1(\textbf{y}|\hat{\theta}_{1})} \cdot \big(n\big)^{(k_1-k_0)/2} \leq \underline{B}_{01}^{I} \leq c\cdot\frac{f_0(\textbf{y}|\hat{\theta}_{0})}{f_1(\textbf{y}|\hat{\theta}_{1})} \cdot \big(n\big)^{(k_1-k_0)/2} $$
	
	In the nested hypothesis case under the null hypothesis, the likelihood ratio in the equation above is bounded because $-2\cdot \log(LR_{01})$ is bounded  (it converges to a central Chi-square distribution by Wilks theorem). Hence, $\underline{B}_{01}^{I}(\textbf{y}) \to \infty$ and $\overline{B}_{10}^{I}(\textbf{y}) \to 0 \mbox{ as } n \to \infty$ and by definition, $ 0\leq B_{01}^{L}(p) \leq 1$. Hence,
	$$ 0 \leq\frac{B_{01}^{L}(p)}{\underline{B}_{01}^{I}} \leq \frac{1}{\underline{B}_{01}^{I}} = \overline{B}_{10}^{I} \longrightarrow \lim_{n\to\infty}0 = 0 \leq\lim_{n\to\infty}\frac{\underline{B}_{01}^{L}(p)}{\underline{B}_{01}^{I}} \leq \lim_{n\to\infty} \overline{B}_{10}^{I} = 0$$
	By the Squeezing Theorem,
	$$\lim_{n\to\infty}\frac{B_{01}^{L}(p)}{\underline{B}_{01}^{I}} = 0.\hspace{30pt}$$
\end{proof}
Although this proof is for nested hypotheses, we believe that this is true in general and we support this claim with examples for separated hypotheses (see Geometric vs. Poisson, Negative Binomial vs Poisson examples).\\ \\
\textbf{A study of the p-value and the RBL vs. the SS Bayes factor under $H_1$}
\\We want to study the following limit under $H_1$
$$\lim_{n\to\infty} \frac{B_{01}^{L}(p)}{\underline{B}_{01}^{I}} $$
Using an approach similar to the one used in the previous theorem, we can see that 
$$ \frac{f_1(\textbf{y}|\hat{\theta_1})}{c\cdot n^{k_1-k_0}f_0(\textbf{y}|\hat{\theta_0})} \leq \frac{B_{01}^{L}(p)}{\underline{B}_{01}^{I}} \leq \frac{f_1(\textbf{y}|\hat{\theta_1})}{b\cdot n^{k_1-k_0}f_0(\textbf{y}|\hat{\theta_0})}$$

To explore this limit we need the asymptotic distribution of the likelihood ratio under the alternative hypothesis and related results by Self, S. G., \& Liang, K. Y. (1987) \cite{self1987asymptotic} known as the generalized likelihood ratio test (GLRT) can be used to handle the case where the alternative hypothesis is true. The GLRT is a modification of the likelihood ratio test that allows for testing of both null and alternative hypotheses, and it is based on the same asymptotic theory as Wilks' theorem. Specifically, as the sample size goes to infinity, the distribution of the GLRT under the alternative hypothesis approaches a non-central chi-squared distribution with a non-centrality parameter proportional to the squared distance between the true parameter value and the null parameter value, and degrees of freedom equal to the difference in the number of parameters between the null and alternative hypotheses. Mathematically, we can write this as:

$$LR \sim \chi^2(p,\lambda) \mbox { as } n \to \infty  \mbox{ under } H_1 $$

where $\chi^2(p,\lambda)$ denotes the non-central chi-squared distribution with p degrees of freedom, the non-centrality parameter $\lambda$ and $LR = 2*\log(L(\hat{\theta}_1)/L(\hat{\theta}_0))$. The GLRT can be used to test hypotheses in a wide range of statistical models, including linear regression, logistic regression, and ANOVA, among others. So, while Wilks' theorem applies only to the null hypothesis, the related GLRT provides a way to handle testing of the alternative hypothesis using the same underlying asymptotic theory. 

In some cases, it may be possible to write the asymptotic distribution of the likelihood ratio test statistic as a function of n. 

%Let's consider a one-sample hypothesis test for the mean of a population, where the null hypothesis is $H_0: \mu = \mu_0$, and the alternative hypothesis is $H_1: \mu \neq \mu_0$. Assuming that the sample size is large and that the true mean is far from the null mean, then we can approximate the distribution of the test statistic as a normal distribution using the central limit theorem. In this case of testing the mean of a normal distribution, the asymptotic distribution of the likelihood ratio test statistic under the alternative hypothesis is a non-central chi-squared distribution with degrees of freedom 1 and non-centrality parameter equal to the square of the difference between the true mean and the null mean, divided by the variance and multiplied by the sample size.

%Mathematically, we can write this as:

%$$
%LR \sim \chi^2(1,n*(\theta-\theta_0)^2/\sigma^2) (\mbox{ as } n \to \infty, \mbox { under } H_1)$$

%where LR is the likelihood ratio test statistic, $\chi^2(1, \lambda)$ denotes the non-central chi-squared distribution with 1 degree of freedom and non-centrality parameter $\lambda, \theta$ is the true mean, $\theta_0$ is the null mean, and $\sigma$ is the known standard deviation.

\hspace{10pt}However, note that this formula applies only to specific cases, and the asymptotic distribution of the likelihood ratio test statistic may be different for other models and hypotheses. Determining the non-centrality parameter $\lambda$ for the asymptotic distribution of the likelihood ratio test statistic can be challenging and requires knowledge of the true parameter values under the alternative hypothesis. In general, there is no simple formula that can be used to calculate $\lambda$ for any hypotheses test. In some cases, simulation methods may be used to estimate the non-centrality parameter $\lambda$. This involves generating simulated data sets from the assumed distribution under the alternative hypothesis, calculating the likelihood ratio test statistic for each data set, and then estimating the distribution of the test statistic based on the simulated values. The estimated distribution can then be compared to the theoretical distribution to estimate the non-centrality parameter.
\hspace{10pt} In general, determining the non-centrality parameter and its rate of convergence for the asymptotic distribution of the likelihood ratio test statistic is an active area of research, and the methods used to estimate $\lambda$ depend on the specific model and hypothesis being tested. We conjecture that if there exist $r > 0$ such that $\lim_{n\to\infty}\frac{LR}{n^{r}}$ is constant under $H_1$, then
\begin{equation}
	\lim_{n\to\infty} \frac{B_{01}^{L}(p)}{\underline{B}_{01}^{I}}= 0 \mbox { if } k_1 - k_0 > r
\end{equation}
because if this limit is zero, then similarly to the case when $H_0$ is true, we can show that the limit of the ratio is also zero. Further research is needed to determine if \textbf{Theorem 3.1} is true under the alternative hypothesis in general, or if this theorem only holds under certain conditions. 
\chapter{The SP Bayes Factors}
In this chapter, we introduce a novel prior known as the SP Prior and leverage it to establish what we term the SP Bayes Factor. The designation 'SP' denotes the 'superior posterior' approach, which underpins the creation of these priors. The underlying concept involves transforming an improper prior into a proper posterior distribution by identifying either the theoretical or empirical minimal training sample that maximizes evidence for the null hypothesis (or yields the least favorable prior for the alternative).

Both the prior and the Bayes factor are presented in two versions-empirical and theoretical-similar to the universal lower bound discussed in the preceding chapter. These priors exhibit a close relationship with IBF bounds, which we elucidate within this chapter.

The empirical prior proves valuable in cases where theoretical maximization poses challenges. Remarkably, the empirical SP Bayes Factor closely approximates the actual Bayes factor value compared to the theoretical counterpart. Nevertheless, in scenarios where theoretical maximization remains viable, the theoretical prior offers computational efficiency over the empirical one. Employing both priors, the SP Bayes factors converge towards accurate decisions with increasing sample sizes-a point we substantiate both theoretically and through numerical examples in subsequent sections.\newpage
\section{The SP Prior}
\begin{definition}[The Theoretical SP Prior]
	\vspace{10pt}
	The Theoretical SP prior for $B_{10}$ is defined as
	$$\pi^{SP}_k(\theta_k) = \pi_k(\theta_1|\textbf{y}^{*}_T(\ell)) = \frac{f_k(\textbf{y}^{*}_T(\ell)|\theta_k)\pi_k^{N}(\theta_k)}{m_k(\textbf{y}^{*}_T(\ell))}, k = 0,1$$
\end{definition}
$$\mbox{where } \textbf{y}^{*}_{T}(\ell) = \arg[\sup_{\textbf{y}(\ell) \in D}B_{01}(\textbf{y}(\ell))].$$
\begin{definition}[The Empirical SP Prior]
	The Empirical SP prior for $B_{10}$ is defined as
	$$\pi_k^{SP*}(\theta_k) = \pi_k(\theta_k|\textbf{y}_E(\ell)) = \frac{f_k(\textbf{y}^{*}_E(\ell)|\theta_k)\pi_k^{N}(\theta_k)}{m_k(\textbf{y}^{*}_E(\ell))}, k = 0,1$$
\end{definition}
$$\mbox{where } \textbf{y}^{*}_{E}(\ell) = 
\arg[\sup_{\textbf{y}(\ell) \in D_{n,k}}\frac{m_0(\textbf{y}(\ell))}{m_1(\textbf{y}(\ell))}].$$
\section{The SP Bayes Factor}
\begin{definition}[The Theoretical SP Bayes Factor]\end{definition}
If $\displaystyle\arg\sup_{\textbf{y}(\ell) \in D}B_{01}(\textbf{y}(\ell)) < \infty$, then
$$B^{SP}_{10} = \frac{m_1^{SP}(\textbf{y})}{m_0^{SP}(\textbf{y})} = \frac{\displaystyle\int f_1(\textbf{y}|\theta_1)\pi_1^{SP}(\theta_1)d\theta_1} {\displaystyle\int 
	f_0(\textbf{y}|\theta_0)\pi_0^{SP}(\theta_0)d\theta_0}.$$
\begin{definition}[The Empirical SP Bayes Factor]\end{definition}
Let $\textbf{y}^{*}_T(\ell) = \displaystyle\arg\sup_{\textbf{y}(\ell) \in D_{n,k}}B_{01}(\textbf{y}(\ell))$, and $\textbf{y}(-\ell) = \textbf{y}\setminus \textbf{y}^{*}_T(\ell)$, then 
$$B^{SP*}_{10} = \frac{m_1^{SP*}(\textbf{y})}{m_0^{SP*}(\textbf{y})} = \frac{\displaystyle\int f_1(\textbf{y}(-\ell)|\theta_1)\pi_1^{SP*}(\theta_1)d\theta_1} {\displaystyle\int f_0(\textbf{y}(-\ell)|\theta_0)\pi_0^{SP*}(\theta_0)d\theta_0}.$$
\section{Results}
\begin{theorem}
	\label{EmpiricalSPToEmpiricalSSBF}
	The Empirical IBF upper and lower bounds can be obtained using the Empirical SP Prior, that is
	$$ B^{SP*}_{10} =B_{10}^{N}[\textbf y] \sup_{\ell = 1,...,L}B_{01}{[\textbf{y}(\ell)]} = \overline{B}_{10}^{I*}(\textbf{y}).$$ 
\end{theorem}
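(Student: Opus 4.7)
The plan is to observe that the Empirical SP Bayes Factor is precisely a trained Bayes Factor in the sense of Lemma~\ref{LemmaTrainedBF}, with the training sample pinned down to the specific element of $D_{n,k}$ that maximizes $B_{01}(\mathbf{y}(\ell))$. Once this identification is made, the conclusion follows from a single application of that lemma together with the definition of $\overline{B}_{10}^{I*}$.

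Concretely, I would let $\ell^{*} = \arg\sup_{\ell=1,\ldots,L} B_{01}(\mathbf{y}(\ell))$ where the supremum ranges over the empirical set $D_{n,k}$. Note that maximizing $m_0(\mathbf{y}(\ell))/m_1(\mathbf{y}(\ell))$ coincides with maximizing $B_{01}(\mathbf{y}(\ell))$, so $\mathbf{y}(\ell^{*})$ is the empirical SP training sample used in the definition of $\pi_k^{SP*}$. Substituting the empirical SP priors
\[
\pi_k^{SP*}(\theta_k) \;=\; \frac{f_k(\mathbf{y}(\ell^{*})\mid\theta_k)\,\pi_k^N(\theta_k)}{m_k^N(\mathbf{y}(\ell^{*}))}, \qquad k=0,1,
\]
into the integrals of $B^{SP*}_{10}$ rewrites it exactly as the trained Bayes factor $B_{10}(\mathbf{y}(-\ell^{*})\mid\mathbf{y}(\ell^{*}))$ that appears in Lemma~\ref{LemmaTrainedBF}, since $\pi_k^{SP*}$ is nothing other than the proper posterior $\pi_k^N(\theta_k\mid\mathbf{y}(\ell^{*}))$ serving as the prior on the complementary portion $\mathbf{y}(-\ell^{*})$.

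Applying Lemma~\ref{LemmaTrainedBF} with $(i,j)=(1,0)$ then yields
\[
B^{SP*}_{10} \;=\; B^{N}_{10}(\mathbf{y})\, B^{N}_{01}(\mathbf{y}(\ell^{*})),
\]
and by the very choice of $\ell^{*}$ we have $B^{N}_{01}(\mathbf{y}(\ell^{*})) = \sup_{\ell=1,\ldots,L} B_{01}(\mathbf{y}(\ell))$, giving the middle equality in the claim. The right-hand equality $B^{N}_{10}(\mathbf{y})\sup_\ell B_{01}(\mathbf{y}(\ell)) = \overline{B}_{10}^{I*}(\mathbf{y})$ is then just the definition of the empirical upper bound, obtained from equation~(\ref{AIBF10}) with the theoretical set $D$ replaced by $D_{n,k}$.

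There is no real obstacle; the theorem is essentially a corollary of Lemma~\ref{LemmaTrainedBF}. The only points of care are recognizing that the subscript ``$T$'' appearing inside the Empirical SP Bayes Factor definition should be read as the empirical maximizer (since the sup is explicitly over $D_{n,k}$), checking that the supremum is attained (automatic because $D_{n,k}$ is finite with $L=\binom{n}{k}$ elements), and confirming that $B_{01}(\mathbf{y}(\ell)) < \infty$ so that the prior $\pi_k^{SP*}$ is well-defined, which holds by the properness of the minimal training sample in the definition of $D_{n,k}$.
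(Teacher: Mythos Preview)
Your proposal is correct and follows essentially the same approach as the paper. The only cosmetic difference is that you invoke Lemma~\ref{LemmaTrainedBF} directly, whereas the paper re-derives that factorization inline by substituting the SP prior and pulling the marginals $m_0(\mathbf{y}^{*}_E(\ell)),\,m_1(\mathbf{y}^{*}_E(\ell))$ outside the integrals; the underlying computation is identical.
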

\begin{proof}
	Let $\textbf{y}(-\ell) = \textbf{y}\setminus\{ \textbf{y}^{*}_E(\ell)\}$, without loss of generality, we will prove it for the upper-bound with the supremum since the proof for the infimum lower bound is identical. The Empirical SP Bayes factor is
	$$B^{SP*}_{10} = \frac{m_1^{SP*}(\textbf{y})}{m_0^{SP*}(\textbf{y})} = \frac{\displaystyle\int f_1(\textbf{y}(-\ell)|\boldsymbol{\theta}_1)\frac{f( \textbf{y}^{*}_E(\ell)|\boldsymbol{\theta}_1)\pi_i^{N}(\boldsymbol{\theta}_1)}{m_1( \textbf{y}^{*}_E(\ell))}d\boldsymbol{\boldsymbol{\theta}}_1} {\displaystyle\int f_0(\textbf{y}(-\ell)|\boldsymbol{\theta}_0)\frac{f( \textbf{y}^{*}_E(\ell)|\boldsymbol{\theta}_0)\pi_j^{N}(\boldsymbol{\theta}_0)}{m_0( \textbf{y}^{*}_E(\ell))}d\boldsymbol{\theta}_0}$$
	The marginal distributions $m_0$ and $m_1$ do not depend on $\boldsymbol{\theta}_0$ and $\boldsymbol{\theta}_1$, hence we can take those terms out of the integrals to obtain
	$$ \frac{m_0( \textbf{y}^{*}_E(\ell))}{m_1( \textbf{y}^{*}_E(\ell))} \frac{\displaystyle\int f_1(\textbf{y}(-\ell)|\theta_1)f( \textbf{y}^{*}_E(\ell)|\theta_1)\pi_i^{N}(\theta_1)d\theta_1} {\displaystyle\int f_0(\textbf{y}(-\ell)|\theta_0)f( \textbf{y}^{*}_E(\ell)|\theta_0)\pi_0^{N}(\theta_0)d\theta_0} = \frac{m_0( \textbf{y}^{*}_E(\ell))}{m_1( \textbf{y}^{*}_E(\ell))}\frac{m_1(\textbf{y}(-\ell), \textbf{y}^{*}_E(\ell))}{m_0(\textbf{y}(-\ell), \textbf{y}^{*}_E(\ell))}$$
	$$= B_{01}[ \textbf{y}^{*}_E(\ell)] B^N_{10}[\textbf{y}(-\ell), \textbf{y}^{*}_E(\ell)]$$
	By definition, we have that
	$$  B_{01}[ \textbf{y}^{*}_E(\ell)]  =  B_{01}[\arg(\sup_{\ell = 1,...,L}B_{01}(\textbf{y}(\ell)))] = \sup_{\ell = 1,...,L} B_{01}[\textbf{y}(\ell)] $$
	and $\{ \textbf{y}^{*}_E(\ell)\}\cup\{\textbf{y}(-\ell)\} = \textbf{y}$ and $m_k(\textbf{y}(-\ell), \textbf{y}^{*}_E(\ell)) = m_k(\textbf y),$ for  $k = 0,1$
	\\ \\
	Therefore,
	$$B^{SP*}_{10} =  B^N_{10}[\textbf{y}]\sup_{\ell = 1,...,L} B^{N}_{01}[\textbf{y}(\ell)] =\overline{B}_{10}^{I*}(\textbf{y})$$ 
\end{proof}
\begin{theorem} Let $f(\textbf{y}|\boldsymbol{\theta})$ be a p.d.f or p.m.f a random variable $Y$ with support $S = \{ y: f(y|\theta) > 0 \}$. Suppose we want to calculate the Bayes factor for the following nested hypothesis test $H_0: \theta = \theta_0 \mbox{ vs } H_1: \theta \neq \theta_0$, then the empirical SP-Prior and SP-BF converges to the Theoretical SP-Prior and SP-BF.\end{theorem}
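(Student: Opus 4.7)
The plan is to show that the empirical argmax $\textbf{y}^{*}_E(\ell)$ converges to the theoretical argmax $\textbf{y}^{*}_T(\ell)$ almost surely as $n \to \infty$, and then propagate this convergence through the posterior to obtain convergence of both the SP prior and the SP Bayes factor. The overall strategy rests on two ingredients: a density argument (the empirical training sample space $D_{n,k}$ becomes dense in the theoretical space $D$) and continuity of the relevant maps (training sample $\mapsto$ marginal $\mapsto$ posterior $\mapsto$ Bayes factor).

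First I would establish that $D_{n,k}$ becomes dense in $D$ as $n \to \infty$. Since the observations $y_1,\dots,y_n$ are i.i.d.\ with positive density on $S$, the Glivenko--Cantelli theorem implies the empirical distribution converges uniformly to the true distribution, whose support includes $S$. Consequently, for any $k$-tuple $(y_1^{\star},\dots,y_k^{\star}) \in S^k \supseteq D$ and any neighborhood of that tuple, with probability tending to one $\textbf{y}$ contains $k$ observations inside the neighborhood (for discrete $S$ an exact match occurs almost surely for large $n$). Second, I would verify that $B_{01}(\textbf{y}(\ell))$ is continuous as a function of the training sample values. Under the stated regularity and since $S$ does not depend on $\theta$, dominated convergence applied to $m_k^{N}(\textbf{y}(\ell)) = \int f_k(\textbf{y}(\ell)|\theta_k)\pi_k^{N}(\theta_k)\,d\theta_k$ yields continuity of each marginal, and hence of their ratio $B_{01}(\textbf{y}(\ell)) = m_0(\textbf{y}(\ell))/m_1(\textbf{y}(\ell))$.

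Combining density with continuity, I would conclude that
\[
\sup_{\textbf{y}(\ell) \in D_{n,k}} B_{01}(\textbf{y}(\ell)) \;\longrightarrow\; \sup_{\textbf{y}(\ell) \in D} B_{01}(\textbf{y}(\ell))
\]
almost surely as $n\to\infty$. Since the theorem is stated under the assumption that the theoretical supremum is finite, one can pass to a convergent subsequence of argmaxes (or invoke uniqueness under mild identifiability) to get $\textbf{y}^{*}_E(\ell) \to \textbf{y}^{*}_T(\ell)$. Continuity of the posterior kernel $\pi_k(\theta_k|\textbf{y}(\ell)) = f_k(\textbf{y}(\ell)|\theta_k)\pi_k^{N}(\theta_k)/m_k(\textbf{y}(\ell))$ in $\textbf{y}(\ell)$ then yields $\pi_k^{SP*}(\theta_k) \to \pi_k^{SP}(\theta_k)$ pointwise in $\theta_k$. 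A final application of dominated convergence in the integrals defining $m_k^{SP*}(\textbf{y})$ and $m_k^{SP}(\textbf{y})$ gives $B^{SP*}_{10} \to B^{SP}_{10}$.

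The main obstacle will be the last step, since the empirical SP--BF integrates the likelihood of $\textbf{y}(-\ell) = \textbf{y}\setminus\textbf{y}^{*}_E(\ell)$ against the empirical SP prior, while the theoretical SP--BF integrates the likelihood of the full $\textbf{y}$ against the theoretical SP prior. I need to show that the information discarded by dropping $k$ observations out of $n$ becomes negligible, and that the pointwise convergence of priors upgrades to convergence of the integrated marginals. For this I plan to exhibit a common dominating function: under the nested point-null setup, the product likelihood $f_k(\textbf{y}|\theta_k)$ is uniformly bounded by $f_k(\textbf{y}|\hat\theta_k)$ times an integrable Gaussian factor (via Laplace), and both priors $\pi_k^{SP*}$, $\pi_k^{SP}$ can be dominated by a common posterior tail built from the minimal-training-sample likelihoods. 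Verifying this dominating function exists without further regularity beyond what is already assumed in the paper is the delicate point; for discrete supports the argument simplifies because $\textbf{y}^{*}_E(\ell)$ eventually equals $\textbf{y}^{*}_T(\ell)$ exactly, collapsing the convergence to a finite-$n$ statement.
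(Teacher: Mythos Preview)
Your density-plus-continuity strategy is sound and lines up with the paper's core idea: both arguments hinge on showing that the empirical training-sample space $D_{n,k}$ fills up $D$ so that the empirical supremum of $B_{01}(\textbf{y}(\ell))$ converges to the theoretical one, and both split into a discrete case (eventual exact match) and a continuous case (arbitrarily close approximation). The paper's continuous-case argument is actually less careful than yours: it gives a short proof by contradiction that some observation must fall in every $\varepsilon$-neighborhood of $\textbf{y}^{*}_T(\ell)$, without invoking Glivenko--Cantelli or explicitly checking continuity of $B_{01}$ in the training sample.

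Where you diverge is in the passage from argmax convergence to Bayes-factor convergence. You try to push convergence through the posterior and then through the marginal integrals via dominated convergence, and you correctly flag the ``main obstacle'' of reconciling $\textbf{y}(-\ell)$ with the full $\textbf{y}$. The paper sidesteps this entirely by invoking the preceding factorization result (Theorem~\ref{EmpiricalSPToEmpiricalSSBF}), which says
\[
B^{SP*}_{10}(\textbf{y}) \;=\; B^{N}_{10}(\textbf{y}) \cdot \sup_{\textbf{y}(\ell)\in D_{n,k}} B_{01}(\textbf{y}(\ell)),
\]
so that convergence of $B^{SP*}_{10}$ to $B^{SP}_{10}$ reduces immediately to convergence of the scalar correction factor $\sup_{D_{n,k}} B_{01} \to \sup_{D} B_{01}$. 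This makes your dominated-convergence machinery and the search for a common dominating function unnecessary. Your route is more self-contained and would generalize better to settings where such a clean factorization is unavailable, but for this theorem the factorization is the intended shortcut and you should use it.
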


\begin{proof} Without loss of generality we can assume that $dim(\theta) = 1$, so that the minimal training sample size is one. In this case, the set $D_{n,k} = D_{n,1} = \textbf y_n = (y_1,...,y_n) \subseteq S $, $\displaystyle\lim_{n\to \infty} D_{n,1} = D$ and $L = {n\choose k} = {n \choose 1} = n$. The proof will be divided in the discrete and continuous support cases. \\
	
	\textbf{Case 1:} The support $S$ is discrete
	$$\lim_{n\to\infty} D_{n,1} = \lim_{L\to\infty}\{\textbf{y}(\ell)\}_{l=1}^{L} =  \{\textbf{y}(\ell)\}_{l=1}^{\infty} = D = S$$
	Therefore,
	$$\displaystyle\lim_{n\to\infty}B_{01}^{SP*}(\textbf{y}) = B^N_{01}[\textbf{y}]\displaystyle\lim_{L\to\infty}\sup_{\{\textbf{y}(\ell)\}_{\ell = 1}^{L}} B_{10}[\textbf{y}(\ell)] = B^N_{01}[\textbf{y}]\sup_{D}B_{10}[\textbf{y}(\ell)]$$
	Hence,
	$$ \lim_{n \to \infty}B_{01}^{SP*}(\textbf{y}) = B_{01}^{SP}(\textbf{y}) $$
	Clearly, in the discrete support case, we can eventually (for some n) obtain the minimal training sample $\textbf{y}_n^{*}(\ell)$ that generates the theoretical supremum, and the empirical SS-BF will be exactly equal to the theoretical SSBF. \\ \\
	\indent \textbf{Case 2:} The support $S$ is continuous\\
	In the case of a continuous support $S$, we can only find a sequence of minimal training samples $\{\textbf{y}_{n}^{*}(\ell)\}$ that gets arbitrarily close to a minimal imaginary sample $\textbf{y}^{*}(\ell)$ that produces the theoretical supremum. We will construct such a sequence and prove its existence. Assume that $\textbf{y}^{*}(\ell) = \arg\sup_{D}B_{10}(\textbf{y}(\ell))$ and for each $n$,
	$$ 
	\textbf{y}_n^{*}(\ell) = \textbf{y}(\ell_{i_n}), \mbox{ where } i_n = \arg \min_{i \in \{1,...,{n \choose 1}\}}|\textbf{y}^{*}(\ell) - \textbf{y}(\ell_i)|
	$$
	We claim that $\textbf{y}_n^{*}(\ell) \to \textbf{y}^{*}(\ell)$ for any $\textbf{y}^{*}(\ell) \in D$\\
	\textbf{Proof}: \\ 
	Assume that $\textbf{y}^{*}(\ell) \in D$ and $\exists \epsilon > 0$ such that 
	
	$$\forall n \in \mathbb{N}, |\textbf{y}^{*}(\ell) - \textbf{y}_{n}^{*}(\ell)| > \epsilon$$
	Then the interval $(\textbf{y}^{*}(\ell) - \frac{\epsilon}{2},\textbf{y}^{*}(\ell) + \frac{\epsilon}{2}) \cap D = \emptyset$ which is a contradiction since $\textbf{y}^{*}(\ell) \in D$, thus the intersection can not be empty. Similarly, if such a point does exist in $D$, then 
	$$ \int^{\textbf{y}^{*}(\ell)+\frac{\epsilon}{2}}_{\textbf{y}^{*}(\ell)-\frac{\epsilon}{2}}f(\textbf{y}|\theta)d\textbf{y} = 0 \Longrightarrow \mbox{ f is not continuous in } \textbf{y}^{*}(\ell) \Longrightarrow \textbf{y}^{*}(\ell) \notin D$$
\end{proof}
\chapter{Hypothesis Testing with the Normal Distribution}
This chapter explores hypothesis testing for mean and precision (the inverse of scale) parameters within the Normal Distribution. It covers essential concepts such as the universal lower bound, the SP/Intrinsic Priors, and the SP/Intrinsic Bayes factor. Through numerical experiments and visual charts, we compare these approaches and visualize the results. By blending theoretical foundations with practical applications, this chapter aims to provide readers with a robust understanding of conducting and interpreting these crucial statistical tests, leveraging our novel theoretical framework.. \\
\section{The Normal Scale Hypothesis Test}
\hspace{10pt}Assume $\mathbf{y} = \{y_1,y_2,...,y_n\}$ are i.i.d samples where $y_i \sim N(y_i|\mu, h)$ for every $i$, and that the mean $\mu$ is unknown. We wish to decide if the precision parameter $h=1/\sigma^2$ is a specified value. If we want to make a decision, we should perform the following hypothesis test 
\[
H_0: h=h_0 \mbox{ vs } H_1: h\neq h_0,\
\]
The likelihood is $f(\textbf{y}|\mu,h)=( \frac{h}{2 \pi})^{n/2} \exp[-\frac{h}{2} (S^2+ n(\bar{\textbf{y}}-\mu)^2$, where $S^2=\sum_{i=1}^n (y_i-\bar{\textbf{y}})^2$. As a first step, use the so-called "Jeffrey's rule" equating the prior to the square root of the determinant of the Fisher Information. In the motivating example this leads to: $\pi^J(\mu,h)=C_0 \cdot \frac{d\mu dh}{\sqrt{h}}$.
For the so-called \textquotedbl independence" Jeffrey's we have instead
$$\pi_I^J(\mu,h)=\pi^J(\mu) \pi^J(h)= C_1 \cdot \frac{d\mu dh}{h}$$
Putting together both priors
$\pi^J(\mu,h)=C/h^r$ with $r=1/2, 1$ for the dependent or independent Jeffreys prior respectively. It turns out that the Bayes Factor is,
\begin{equation}
	B^N_{01}(\mathbf{y})=\frac{h_0^{(n-1)/2} \exp(-h_0 S^2/2)}{(2/S^2)^{(n-r)/2} \Gamma((n-r)/2)} \times \frac{C_0}{C_1},
\end{equation}
where $r$ is equal to $1/2$ for the dependent prior, and for the independent prior equal to one. In this thesis, we assume the independent prior, as put forward by Jeffreys.It is apparent from this expression that the Bayes Factor is undetermined since it depends on the undefined ratio $C_0/C_1$, the undefined constants that come from the improper priors. It can be argued, from several points of view, that the constants regarding the location $\mu$ cancel out, leaving only the indeterminacy related to the hypothesis parameter $h$. Some of these points of view are among others:
\begin{enumerate}
	\item Mean and Precision parameters are orthogonal in the Fisher Information Matrix, and thus "cancel-out" in the ratio of marginal likelihoods
	\item It turns out that two location models are predictively matched (see Pericchi (2005) \cite{pericchi2005model}) and when the scale is integrated into the denominator, models under both hypotheses become location models, and the corrections cancel out.\\
\end{enumerate}
\subsection{The universal lower bound}
\hspace{17pt} It is evident from the previous example that, unfortunately, the Bayes factor remains defined only up to the constant that pertains to the precision parameter under test. This is why Jeffrey's suggested conventional 
proper priors for the extra-parameters under the larger hypothesis (but improper for common parameters like $\mu$ here). There have been suggested techniques around this problem, as in Smith and Spiegelhalter (1980) \cite{smith1980bayesfactors} and (1982) \cite{spiegelhalter1982bayesfactors}. These techniques, although approximate, are useful in devising sensible scaling for Bayes Factors. In fact, we proved there for the first time that this approach has a very direct relationship with more recent approaches that have been studied in detail, particularly the Intrinsic Bayes Factor, the Intrinsic Priors, and EP-priors.

In the example of the section, the minimal training sample consists of two observations $(y(\ell_1),y(\ell_2))$, and the expression (2) turns out to be
$$
B^N_{01}(\mathbf{y}) \times B^N_{10}(\mathbf{\textbf{y}(\ell)}) =\frac{h_0^{(n-1)/2}\exp(-h_0 S^2/2)}{(2/S^2)^{p} \frac{C_0}{C_1}\sqrt{h_0/ \pi} \cdot \exp(-h_0 \textbf{D}(\ell)^2/4) |\textbf{D}(\ell)| \cdot \frac{C_1}{C_0}}, 
$$

where $\textbf{D}(\ell)={y(\ell_1)-y(\ell_2)}$ and $p = (n-1)/2 \Gamma((n-1)/2)$. It is apparent from (3) that the undefined constants cancel out. As a specific example, take $H_0:h_0=1$. The crucial step is: what to do about the (theoretical or imaginary) training samples summary statistics $\textbf{D}(\ell)$? SSBF's practical and simplifying approach is to take: 
\begin{equation}
	\sup_{\textbf{y}(\ell) \in D} B^N_{10}(\textbf{D}(\ell)),
\end{equation}
which is attained at $\hat{D}=\sqrt{2}$ and the "correction" (4) is equal to 0.484. The $IBF$ bound is then $\overline{B}_{01}^{I}=B^N_{01} \times \sup_{\textbf{y}(\ell) \in D} B^N_{10}(\textbf{y}(\ell)).$
To see the relationship with more recent approaches like Intrinsic Bayes Factors the $\sup$ above is replaced by the arithmetic mean or by the theoretical expectation, on $\textbf{y}(\ell)$, under $H_1 \supset H_0$. The \textbf{theoretical} expressions in (4) and (5), which take the formal maximum possible value for all possible training samples, can be refined by the \textbf{empirical} observed maximum for the observed $\textbf{y}(\ell)$'s:
\begin{equation}
	\sup_{\ell = 1,...,L} B^N_{10} (\mathbf{y(\ell)}) \le  \sup_{\textbf{y}(\ell) \in D} B^N_{10} (\textbf{y}(\ell)),
\end{equation}
where the distinction arises as $\mathbf{y(\ell)}$ is the set of empirically observed training samples and D which is the set of all possible minimal training samples from the whole support. The empirical bound is then sharper than the theoretical bound, and we can refer to the theoretical bound and the empirical bound of the Intrinsic Bayes Factors. In the Expected IBF case, for any pair of i.i.d. training samples $\{y(\ell_1),y(\ell_2)\}$, assumed (when taking expectations) to be drawn from $H_1\supset H_0$, results in $\textbf{D}(\ell) \sim N(0,2/h)$. Then under $H_1$:
\[
\frac{B^{EIBF}_{10}(\textbf{y})}{B^N_{10}(\mathbf{y})}= \int \frac{|\textbf{D}|}{\sqrt{\pi}} exp(-\frac{\textbf{D}^2}{4})\cdot \frac{\sqrt{h}\cdot exp(-h\cdot \textbf{D}^2/4)}{2 \sqrt{\pi}} d\textbf{D}=\frac{2 \sqrt{h}}{\pi (h+1)}.
\]
In an applied problem with real data, the parameter $h$ will be estimated by its MLE. In this theoretical exercise, we study it as a function of $h$, and computation yields that its maximum is attained at the null hypothesis $h=h_0=1$ and its value at its maximum is $1/\pi=0.318$. \\
%	\begin{figure}
	%		\includegraphics[scale=.3]{B_01_SS_EIBF.pdf}
	%		\caption{Correction Factors for SS's corrections (bold) and %EIBF (line)}\label{fig1.1}
	%	\end{figure}
\subsection{Remarks}
\begin{enumerate}
	\item Our ratio of constants is 0.484 and the Maximum of the Expected IBF of the Training Sample is 0.318. This may seem like a big difference (the ratio is about .66). These values are attained close to the null hypothesis, the difference far from the null, may be much bigger but there, the dominant factor $B^N_{01}(\mathbf{y})$, will overwhelm that difference for a moderate sample, and even small but larger than the training sample sizes. 
	\item The lower bound is robust, it does not depend on the fact that all the training samples belong to the alternative hypothesis $H_1$ and that all belong to the same population. This bound achieves a very important robust effect. The bound has Universal validity: It is still valid under violations of distributional assumptions, for example, Normality or homogeneity of the population of training samples. It can also be used to check the computations of IBFs and with Intrinsic Priors.
	\item The SS's bound on Intrinsic Bayes Factors is typically easy to compute and generalize to realistic scenarios.
	\item In terms of p-values, if it can be found a "tight" bound $g(p)$ so that $g(p) \le B^N_{01}$ then 
	\begin{equation}
		g(p) \times \frac{1}{\sup B^N_{01}(\textbf{y}(\ell))} \le \underline{B}^{I}_{01} \le B_{01}
	\end{equation}
	which is a Universal Bound that adapts to the sample size n.\\
\end{enumerate}
\newpage
\subsection{The SP and Intrinsic Priors}
\hspace{15pt}In this section, we turn our attention to the calculation of Superior Posterior (SP) and Intrinsic priors within the context of the Normal Scale Hypothesis testing scenario. An in-depth exploration will be conducted to illuminate the nuanced attributes and similarities shared between these two types of priors. To enhance the comprehension and tangibility of the underlying principles, we shall include illustrative representations in the form of charts. These graphical elucidations will serve as effective tools for visualizing the properties and similarities of both SP and Intrinsic priors.

Moreover, to ensure results and facilitate the application of the presented methods, we will accompany our discussion with executable \textbf{R} code snippets. These will illustrate the computational procedures required to carry out the prior calculations and generate the corresponding charts. By making this code available, we aim to provide a practical and accessible means for readers to engage with our methodology.
Consider the Bayes factor for the minimal training sample of the normal scale example with an unknown mean 
$$ B_{10}^N(\textbf{y}(\ell)) \propto \frac{1}{\exp\{-h_0 \textbf{D}(\ell)^2/4\}|\textbf{D}(\ell)|} = \frac{\exp\{h_0\textbf{D}(\ell)^2/4\}}{|\textbf{D}(\ell)|}$$

To find the value of $\textbf{D}(\ell)$ that generates the $\sup$ of $B_{10}^N( \textbf{y}(\ell))$, 
we must take logarithms on both sides, and differentiate the equation $B_{10}^N( \textbf{y}(\ell)) = 0$ with respect to $\textbf{D}(\ell)$,
$$\frac{d}{d\textbf{D}(\ell)}\log(\frac{\exp\{h_0 \textbf{D}(\ell)^2/4\}}{|\textbf{D}(\ell)|}) = 2h_0\frac{\hat{D}}{4} - \frac{1}{\hat{D}} = 0 \Longrightarrow h_0\frac{\hat{D}^2}{2} = 1 \Longrightarrow \boxed { \hat{D} = \sqrt{\frac{2}{h_0}  }}$$
After calculating this value, we must consider the prior of $\mu$ and $h$ given $\textbf{y}(\ell)$ which is
\begin{equation}
	\pi(\mu,h|\textbf{y}(\ell)) = \frac{f(\textbf{y}(\ell)|\mu,h)\pi(\mu)\pi(h)}{\int f(\textbf{y}(\ell)|\mu,h)\pi(\mu)\pi(h)d\mu dh} =  \frac{\frac{1}{h}\frac{h}{2\pi}\exp\{-\frac{h}{2}[(y(1) - \mu)^2 + (y(2) - \mu)^2]\}}{\frac{1}{2\pi}\displaystyle\int  \exp\{-\frac{h}{2}(\frac{\textbf{D}(\ell)^2}{2} + 2(\mu - \bar{\textbf{y}}(\ell))^2 \}d\mu d h} 
\end{equation}
%We obtained the equation above using the fact that 
%$$\sum_{i = 1}^{n}(y(i) - \bar{y}(\ell))^2  + n(\mu - \bar{y}(\ell))^2$$
%can be written for $n = 2$ as 
%$$ (y(1) - \frac{y(1) + y(2)}{2})^2 + (y(2) - \frac{y(1) + y(2)}{2})^2 + 2(\mu - \bar{y}(\ell))^2$$
%$$= \frac{(y(1) - y(2))^2 + (y(2) - y(1))^2}{4} + 2(\mu - \bar{y})^2 = \frac{D(\ell)^2}{2} + 2(\mu - \bar{y}(\ell))^2$$ 

Simplifying this equation, we obtain
$$
\frac{\exp\{-\frac{h}{2}[\frac{\textbf{D}(\ell)^2}{2} + 2(\mu - \bar{\textbf{y}}(\ell))^2] \}}
{\displaystyle\int^{\infty}_{0}\exp\{-\frac{h}{4}\textbf{D}(\ell)^2\}\frac{\sqrt{2\pi}}{\sqrt{2h}}dh}
$$
To solve the last integral, we should consider the following change of variable 
$$u = \frac{h\textbf{D}(\ell)^2}{4}, \hspace{10pt} h = \frac{4u}{\textbf{D}(\ell)^2}, \hspace{10pt}dh = \frac{4}{\textbf{D}(\ell)^2}du$$
$$
\frac{4\sqrt{\pi}}{\textbf{D}(\ell)^2}\displaystyle\int_{0}^{\infty}(\frac{4u}{\textbf{D}(\ell)^2})^{\frac{1}{2} - 1}\exp\{-u\}du = 
\Gamma(\frac{1}{2})\frac{4\sqrt{\pi}}{\textbf{D}^2}\frac{D}{2} = \frac{2\pi}{\textbf{D}}
$$
Hence,
\begin{equation}
	\pi(\mu,h|D=\hat{D}) = \frac{\exp\{-\frac{h}{2}[\frac{\hat{D}^2}{2} + 2(\mu - \bar{y}(\ell))^2] \}\hat{D}}
	{2\pi}
\end{equation}
Integrating with respect to $\mu$, we obtain the prior for $h$
%$$\pi(h|\textbf{D}) = \displaystyle\int\pi(\mu,h|\textbf{D})d\mu = \displaystyle\int\frac{\textbf{D}}{2\pi}\exp\{-\frac{h}{2}(\frac{\textbf{D}^2}{2} + 2(\mu - \bar{\textbf{y}}(\ell))^2\}d\mu$$
%$$ \pi(h|\textbf{D}) = \frac{\textbf{D}}{2\pi}\exp\{\frac{-h}{2}\frac{\textbf{D}^2}{2}\}\frac{\sqrt{2\pi}}{\sqrt{2h}} = \frac{\textbf{D}\exp\{\frac{-h\textbf{D}^2}{4}\}}{\sqrt{2\pi}\sqrt{2h}} $$
\begin{align*}
	\pi(h|\textbf{D}) &= \int \pi(\mu,h|\textbf{D}) \, d\mu = \int \frac{\textbf{D}}{2\pi} \exp\left\{-\frac{h}{2}\left(\frac{\textbf{D}^2}{2} + 2(\mu - \bar{\textbf{y}}(\ell))^2\right)\right\} \, d\mu \\
	&= \frac{\textbf{D}}{2\pi} \exp\left\{-\frac{h}{2}\frac{\textbf{D}^2}{2}\right\}\frac{\sqrt{2\pi}}{\sqrt{2h}} = \frac{\textbf{D}\exp\left\{-\frac{h\textbf{D}^2}{4}\right\}}{\sqrt{2\pi}\sqrt{2h}}
\end{align*}
and by substituting $\textbf{D}$ by $\hat{D}$, we can obtain the SP prior for $h$
$$ \boxed{ \pi^{SP}(h|\hat{D} = \sqrt{\frac{2}{h_0}}) = \frac{1}{\sqrt{2\pi hh_0}}\exp\{-\frac{h}{2h_0}\}}$$
Evidently, the prior $\pi^{SP}(h)$ is $Gamma(\alpha = 1/2,\beta = 2h_0)$.
To find the prior for $\mu$, we must integrate with respect to $h$ as follows
$$\pi^{SP}(\mu|\textbf{D}) = \displaystyle\int_{0}^{\infty}\exp\{-\frac{h}{2}[\frac{\textbf{D}^2}{2} + 2(\mu - \bar{\textbf{y}}(\ell))^{2}]\}\frac{\textbf{D}}{2\pi}dh = \frac{8\pi}{\textbf{D}^2 + 4(\mu - \bar{\textbf{y}}(\ell))}$$ 

$$ \pi^{SP}(\mu|\hat{D} = \sqrt{\frac{2}{h_0}}) = \frac{8\pi}{\frac{2}{h_0} + 4(\mu - \bar{\textbf{y}}(\ell))^2} = \frac{2\pi}{\frac{1}{2h_0} + (\mu - \bar{y}(\ell))^2} = \frac{4h_0\pi}{1 + (\frac{\mu - \bar{\textbf{y}}(\ell)}{\sqrt{2h_0}})^2} $$

The distribution of $\mu$ for $B_{01}$ is proportional to a $Cauchy(\bar{\textbf{y}},\gamma)$ where $\gamma = \sqrt{2h_0}$. 
$$ \boxed{ \pi^{SP}(\mu) \propto \frac{1}{\pi \gamma\Big(1 + (\frac{\mu - \bar{\textbf{y}}(\ell)}{\gamma})^2\Big)}  }$$
Next, we will compute the intrinsic priors for the same hypothesis test above to compare those with the SP priors. The improper intrinsic prior for $\mu$ is proportional to a constant $\pi^{I}(\mu) \propto c$. In the other hand, the intrinsic prior for the scale $h$ is
$$ \pi^{I}(\mu,h) = \pi^{N}(\mu,h)\displaystyle\int \frac{m_0^{N}(\textbf{y}(\ell))}{m_1^{N}(\textbf{y}(\ell))}f(\textbf{y}(\ell)|\mu,h)d\textbf{y}(\ell) = \displaystyle\int \pi(\mu,h|\textbf{y}(\ell))m_0(\textbf{y}(\ell))d\textbf{y}(\ell)$$
$$ \boxed{ \pi^{I}(h) \propto \frac{1}{\pi}\frac{\sqrt{\frac{h}{h_0}}}{\frac{h+1}{h_0}}\frac{1}{h} = \frac{1}{\pi h_0}\frac{(\frac{h}{h_0})^{1/2 -1}}{(\frac{h}{h_0} + 1)} }$$

Evidently, the prior $\pi^{I}(h)$, is a $SBeta2(h|p=1/2,q=1/2,b=h_0)$ (Scale beta2 distribution). The Scale beta two density is given by
$$ f(y|p,q,b) = \frac{\Gamma(p + q)}{\Gamma(p)\Gamma(q)}\frac{(y/b)^{p -1}}{(\frac{y}{b} + 1)^{(p+q)}}, \hspace{5pt} y,p,q,b > 0$$ 
The subsequent script is written in the \textbf{R} programming language that facilitates the generation of a comparative graphical representation of both priors. These priors are superimposed on the same set of axes to foster a direct comparison. This visual comparison is executed across distinct values of the null hypothesis ($H_0 = 1$ and $H_0 = 10$).\\
$\textbf{R program}$
\begin{lstlisting}[language=R]
	sp_prior = function(h,h0) { (1/sqrt(2*pi*h*h0))*exp(-h/(2*h0)) }
	i_prior = function(h,h0) { (1/(pi*h0))*( (h/h0)^(-0.5) )/(h/h0+1) }
	par(mfrow=c(1,2)); h = seq(0.1,5,0.01);h0=1; iprior = i_prior(h,h0); spprior = sp_prior(h,h0)
	ymin = min(spprior,iprior); ymax = max(spprior,iprior)
	plot(h,spprior,type="l",main="Comparison of the SP and Intrinsic Priors of h",ylab="Priors")
	mtext(text="Hypothesis test: H0 = 1 vs H0 != 1",side=3); lines(h,iprior,type="l",col="red")
	legend(3.4, 1.2, legend=c("SP", "Intrinsic"),col=c("black", "red"), lty=c(1,1), cex=0.8,title="Priors", text.font=4, bg='lightblue')
	h = seq(1,15,0.01);h0=10; iprior = i_prior(h,h0); spprior = sp_prior(h,h0)
	ymin = min(spprior,iprior); ymax = max(spprior,iprior)
	plot(h,spprior,type="l",main="Comparison of the SP and Intrinsic Priors of h",ylim=c(ymin,ymax),ylab="Priors")
	mtext(text="Hypothesis test: H0 = 10 vs H0 != 10",side=3); lines(h,iprior,type="l",col="red")
	legend(10.5, 0.12, legend=c("SP", "Intrinsic"),col=c("black", "red"), lty=c(1,1), cex=0.8,title="Priors", text.font=4, bg='lightblue')
\end{lstlisting}
\begin{figure}[ht]
	\includegraphics[scale=0.35]{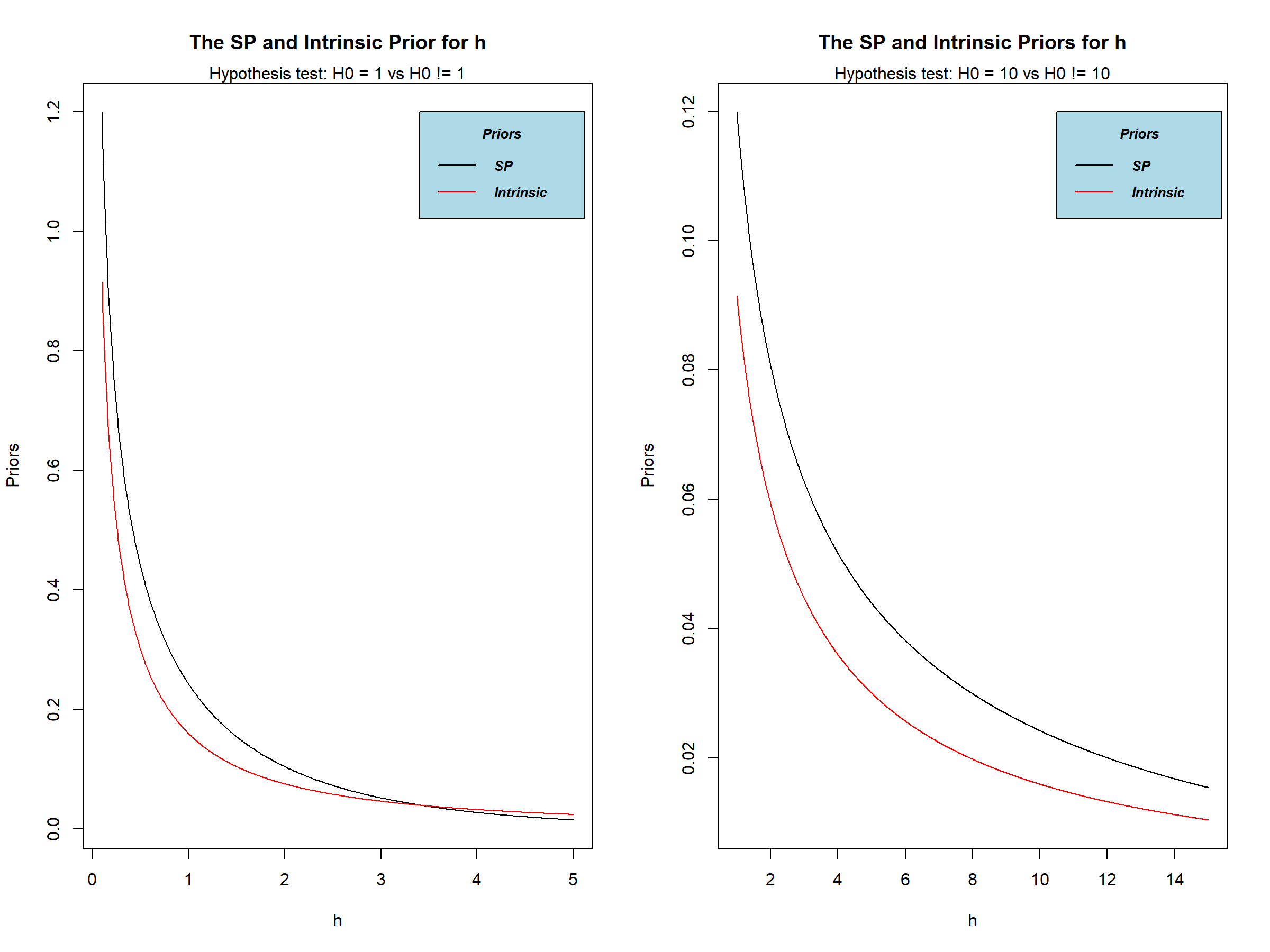}
	\caption{A comparison between the SP Prior and the Intrinsic Prior for the Normal Scale Hypothesis test for different null hypothesis values.}\label{spvsip}
\end{figure}
\newpage
As illustrated in the preceding figures, a noteworthy characteristic of the Superior Posterior (SP) Prior emerges. This prior assigns a heightened probability to the null hypothesis in comparison to the Intrinsic Prior, specifically when the values of $h$ are less than that of the null hypothesis. It is furthermore observed that the probability ascribed by the SP is invariably equal to or exceeds that of the Intrinsic Prior under these conditions. This pattern elucidates the differential weights that these priors place on the null hypothesis, offering valuable insights for our investigation. \\ 
\section{The Normal Mean Hypothesis Test}
\hspace{10pt} This section is dedicated to exploring the domain of Normal Mean Hypothesis Testing. Here, we uncover the core principles and methodologies that drive inference regarding the mean parameter within the Normal Distribution. Assume that $y_i \sim N(\mu,\sigma_0^2)$ and consider the following hypothesis test
$$
H_0: \mu=\mu_0 \mbox{ vs } H_1: \mu \neq \mu_0, \sigma_0, \mbox{ known }
$$
Hence, the non-informative prior is $\pi^{N}(\mu) = 1$ and a minimal training sample is only one observation $y(\ell)$, for any $\ell =1,...,n$. We will use that information to calculate different Bayes factors and their bounds in the next sections. 
\subsection{The Expected Intrinsic Bayes Factor for the Normal Mean Hypothesis Test}
Consider the general formula for the Expected Intrinsic Bayes Factor (EIBF)
$$\boxed{B_{10}^{EI}(\textbf{y}) =  B_{10}^{N}(\textbf{y})E^{\hat\theta}_{\textbf{y}(\ell)|H_0}( \frac{m_0^{N}(\textbf{y}(\ell))}{m_1^{N}(\textbf{y}(\ell))} )} $$ 
For this specific hypothesis test, the EIBF can be obtained by solving the following integral
$$ E^{\hat\theta}_{\textbf{y}(\ell)|H_0}( \frac{m_0^{N}(\textbf{y}(\ell))}{m_1^{N}(\textbf{y}(\ell))} ) = \int_{-\infty}^{\infty}\Big(\frac{1}{\sqrt{2\pi}\sigma_0}\Big)^2\exp\{-\frac{1}{2\sigma_0^2}(\textbf{y}(\ell) - \mu_0)^2\}\exp\{-\frac{1}{2\sigma_0^2}(\textbf{y}(\ell) - \mu)^2\}d\textbf{y}(\ell)$$
This integral can be easily solved to obtain the following equality
$$ E^{\hat\theta}_{\textbf{y}(\ell)|H_0}( \frac{m_0^{N}(\textbf{y}(\ell))}{m_1^{N}(\textbf{y}(\ell))} ) = N(\mu|\mu_0,2\sigma_0^2)  = \frac{1}{\sqrt{2\pi}\sqrt{2\sigma_0^2}}\exp\{\frac{-1}{4\sigma_0^2}(\mu - \mu_0)^2\}$$
In this case, the parameter $\mu$ is unknown and we should estimate it using the maximum likelihood estimator $\bar{\textbf{y}}$ to obtain
$$E^{\hat\theta}_{\textbf{y}(\ell)|H_0}( \frac{m_0^{N}(\textbf{y}(\ell))}{m_1^{N}(\textbf{y}(\ell))} )  = \frac{1}{\sqrt{2\pi}\sqrt{2\sigma_0^2}}\exp\{\frac{-1}{4\sigma_0^2}(\bar{\textbf{y}} - \mu_0)^2\}$$

After finding this term, we can proceed to calculate the Expected Intrinsic Bayes Factor 
$$ B_{10}^{EI} = \frac{1}{\sqrt{2\pi}\sqrt{2\sigma_0^2}}\exp\{\frac{-1}{4\sigma_0^2}(\bar{\textbf{y}} - \mu_0)^2\}  \frac{\sqrt{2\pi}\frac{\sigma_0}{\sqrt{n}}}{\exp\{\frac{-1}{2\sigma_0^2}n(\bar{\textbf{y}} - \mu_0)^2\}}$$ 
Simplifying the equation above yields
$$ \boxed{ B_{10}^{EI} = \frac{1}{\sqrt{2n}}\exp\{\frac{1}{2\sigma_0^2}(n - \frac{1}{2})(\bar{\textbf{y}} - \mu_0)^2 \} }$$ 

\subsection{The Intrinsic Bayes Factor Bounds for the Normal Mean Hypothesis Test}
Let us consider a scenario where the random vector $\mathbf{Y} = (Y_1, \ldots, Y_n)$ is composed of independent and identically distributed (i.i.d) random variables $Y_i$, following a normal distribution with mean $\mu_0$ and standard deviation $\sigma_0$ (known) under the null hypothesis $M_0$, and a normal distribution with mean $\mu$ and standard deviation $\sigma_0$ (known) under the alternative hypothesis $M_1$. We adopt the non-informative prior, specifically $\pi_1^{N}(\mu) = {1}$.

$$ B_{01}^{N}(\textbf{y}(\ell)) = \frac{\frac{1}{\sqrt{2\pi}\sigma_0}\exp\{\frac{-1}{2\sigma_0^2}(\textbf{y}(\ell) - \mu_0)^2\}}{
	\displaystyle\int_{-\infty}^{\infty}\frac{1}{\sqrt{2\pi}\sigma_0}\exp\{\frac{-1}{2\sigma_0^2}(\textbf{y}(\ell) - \mu)^2\}d\mu}$$
From the equation above, it is evident that the maximum value of the Bayes factor is obtained when $\textbf{y}(\ell) = \mu_0$, and the maximum value is 
$$ \boxed{\sup_{\textbf{y}(\ell)\in D}B_{01}^{N}(\textbf{y}(\ell)) = \frac{1}{\sqrt{2\pi}\sigma_0}}.$$ 
By using the result above, we can compute the Bayes factor bound
%$$ \overline{B}_{10}^{I} = \frac{\displaystyle\int_{0}^{\infty}(\frac{1}{\sqrt{2\pi}\sigma_0})^{n}\exp\{-\frac{1}{2\sigma_0^2}\sum[(\textbf{y}(\ell) - \bar{\textbf{y}})^2 + n(\bar{\textbf{y}} - \mu)^2]\}d\mu}{\displaystyle(\frac{1}{\sqrt{2\pi}\sigma_0})^n\exp\{-\frac{1}{2\sigma_0^2}\sum(\textbf{y}(\ell) - \mu_0)^2\}}\times \sup B_{01}^{N}(\textbf{y}(\ell))$$
%$$\overline{B}_{10}^{I} = \frac{\sqrt{2\pi}\frac{\sigma_0}{\sqrt{n}}}{\exp\{\frac{-1}{2\sigma_0^2}n(\bar{y} - \mu_0)^2\}} \sup B_{01}^{N}(y(\ell)) = \frac{\sqrt{2\pi}\frac{\sigma_0}{\sqrt{n}}}{\exp\{\frac{-1}{2\sigma_0^2}n(\bar{y} - \mu_0)^2\}}\frac{1}{\sqrt{2\pi}\sigma_0} $$
\begin{align*}
	\overline{B}_{10}^{I} &= B_{10}^{N}(\textbf{y})\times \sup B_{01}^{N}(\textbf{y}(\ell))\\
	&= \frac{\displaystyle\int_{0}^{\infty}(\frac{1}{\sqrt{2\pi}\sigma_0})^{n}\exp\{-\frac{1}{2\sigma_0^2}\sum[(\textbf{y}(\ell) - \bar{\textbf{y}})^2 + n(\bar{\textbf{y}} - \mu)^2]\}d\mu}{\displaystyle(\frac{1}{\sqrt{2\pi}\sigma_0})^n\exp\{-\frac{1}{2\sigma_0^2}\sum(\textbf{y}(\ell) - \mu_0)^2\}}\times \sup B_{01}^{N}(\textbf{y}(\ell)) \\
	&= \frac{\sqrt{2\pi}\frac{\sigma_0}{\sqrt{n}}}{\exp\{\frac{-1}{2\sigma_0^2}n(\bar{\textbf{y}} - \mu_0)^2\}} \sup B_{01}^{N}(y(\ell)) = \frac{\sqrt{2\pi}\frac{\sigma_0}{\sqrt{n}}}{\exp\{\frac{-1}{2\sigma_0^2}n(\bar{\textbf{y}} - \mu_0)^2\}}\frac{1}{\sqrt{2\pi}\sigma_0}
\end{align*}
Simplifying the equation above yields
$$\boxed{ \overline{B}_{10}^{I}= \frac{\exp\{\frac{1}{2\sigma_0^2}n(\bar{\textbf{y}} - \mu_0)^2\}}{\sqrt{n}} }$$
Similarly, we can obtain the IBF lower bound
$$\boxed{ \underline{B}_{01}^{I}= {\sqrt{n}}{\Big(\exp-\{\frac{1}{2\sigma_0^2}n(\bar{\textbf{y}} - \mu_0)^2\}}\Big)}.$$

\subsection{Normal mean hypothesis test with unknown variance.}

Let us consider a scenario where the random vector $\mathbf{Y} = (Y_1, \ldots, Y_n)$ is composed of independent and identically distributed (i.i.d) random variables $Y_i$, following a normal distribution with mean $\mu_0$ and standard deviation $\sigma_0$ under the null hypothesis $M_0$, and a normal distribution with mean $\mu$ and standard deviation $\sigma_1$ under the alternative hypothesis $M_1$.
$$H_0: \mu=\mu_0 \mbox{ vs } H_1: \mu \neq \mu_0, \sigma \mbox { unknown}$$
We adopt non-informative priors, specifically $\pi_0^{N}(\sigma_0) = {1}/{\sigma_0}$ and $\pi_1^{N}(\mu,\sigma_1) = {1}/{\sigma_1^2}$. It is important to note that the choice of $\pi_1^N$ differs from that of $\pi_0^N$ due to a seminal work by Berger, Pericchi, and Varshavsky in 1998 \cite{berger1998bayes}, wherein they calculated the marginal of $\textbf{y}(\ell)$ using $\pi_1^{N}(\mu,\sigma_1) = {1}/{\sigma_1}$. In this context, the expression for the density function under the alternative hypothesis takes the form:
$$ m_1(\textbf{y}(\ell)) = \frac{1}{2|y_1-y_2|}$$ 
where $\mathbf{y}(\ell)$ represents the observed data.
However, we face a challenge in evaluating the Bayes Factor due to the nature of the above density function. Specifically, the infimum and supremum of this function can potentially become either zero or infinity, leading to indeterminate bounds for the Bayes Factor. To circumvent this issue and ensure well-defined bounds, we opt for the aforementioned choice of priors. This strategic choice of priors enables us to derive meaningful marginals and subsequently evaluate the Bayes Factor.
$$ m_0^N(\textbf{y}(\ell)) = \frac{1}{2\pi(y_1^2 + y_2^2)}, \hspace{10pt} m_1^{N}(\textbf{y}(\ell)) = \frac{1}{\sqrt{\pi}(y_1 - y_2)^2}$$
and the Bayes factor is
$$B_{01}(\textbf{y}(\ell)) = \frac{m_{0}(\textbf{y}(\ell))}{m_{1}(\textbf{y}(\ell))} = \frac{(y_1 - y_2)^2}{2\sqrt{\pi}(y_1^2 + y_2^2)} $$
To find the IBF bounds, we found the supremum of the Bayes factor above
$$\sup_{\textbf{y}(\ell) \in D}B_{01}(\textbf{y}(\ell)) = \frac{m_{0}(\textbf{y}(\ell))}{m_{1}(\textbf{y}(\ell))} = \frac{(y_1 - y_2)^2}{2\sqrt{\pi}(y_1^2 + y_2^2)} = \frac{1}{\sqrt{\pi}} $$
The IBF upper bound can be obtained as
$$\overline{B}_{10} = B_{10}(\textbf{y})\sup B_{01}(\textbf{y}(\ell)) = \frac{m_1(\textbf{y})}{m_0(\textbf{y})}\frac{1}{\sqrt{\pi}}$$
To find the upper bound, we must calculate the marginals $m_0$ and $m_1$. To obtain $m_0$, we must solve the following integral
$$m_0(\textbf{y})) = \int_{0}^{\infty} (\frac{1}{\sqrt{2\pi}})^n \exp\{\frac{-1}{2\sigma_0^2}[\sum(y_i - \mu_0)^2] \} \frac{1}{\sigma_0^{(n+1)}} d\sigma_0$$
Consider the change of variable $\beta = \frac{1}{2}[\sum(y_i - \mu_0)^2], \frac{1}{\sigma^2} = \eta$, then
$$\frac{-2}{\sigma^3}d\sigma = d\eta \Longrightarrow d\sigma = -\frac{\sigma^{3}}{2}d\eta$$
$$ \frac{1}{\eta^{1/2}} = \sigma \Longrightarrow d\sigma = -\eta^{3/2}/2d\eta $$
$$\boxed{m_0(\textbf{y}) = \frac{1}{2}(\frac{1}{\sqrt{2\pi}})^n \int_{0}^{\infty}\exp\{-\beta\eta\} {\eta^{n/2 -1}}d\eta = \frac{1}{2}(\frac{1}{\sqrt{2\pi}})^n \frac{\Gamma(n/2)}{\beta^{n/2}}}$$
\\
Similarly, we can calculate $m_1$
$$
m_1(\textbf{y}) = 
(\frac{1}{\sqrt{2\pi}})^n
\int_{0}^{\infty}
\frac{1}{\sigma_1^{n+2}}
\int_{-\infty}^{\infty}
\exp\{-\frac{1}{2\sigma_1^2}(\sum(y_i - \mu)^2)) \}
d\mu d\sigma_1
$$
Adding and subtracting $\bar{\textbf{y}}$ inside the exponential function yields
$$ 
\sum(y_i - \mu)^2 = 
\sum( (y_i + \bar{\textbf{y}}) - (\mu + \bar{\textbf{y}}) )^2
=
\sum (y_i - \bar{\textbf{y}})^2 + n(\mu - \bar{\textbf{y}})^2
$$
Inserting this result inside the integral with respect to $\mu$ results in
\begin{align*}
	\int_{-\infty}^{\infty}
	\exp\{-\frac{1}{2\sigma_1^2}(\sum(y_i - \mu)^2)) \}
	d\mu
	&= 
	\exp\{\frac{-1}{2\sigma_1^2}\sum(y_i - \bar{y})^2\}\int_{-\infty}^{\infty}\exp\{\frac{-1}{2(\sigma_1/\sqrt{n})^2}(\mu - \bar{\textbf{y}})^2\}
	d\mu\\
	&= \sigma_1\sqrt{2\pi/n}\exp\{-\frac{1}{2\sigma_1^2}(\sum(y_i - \bar{\textbf{y}})^2)\}
\end{align*}
To calculate $m_1$, we insert this result to solve the integral with respect to $\sigma_1$
$$
\frac{1}{\sqrt{n}}(\frac{1}{\sqrt{2\pi}})^{n-1}
\int_{0}^{\infty}\frac{1}{\sigma_1^{n+1}}\exp\{-\frac{1}{2\sigma_1^2}(\sum(y_i - \bar{y})^2)\}d\sigma_1
$$
Let $\beta = \frac{\sum(y_i - \bar{y})^2}{2}$ and $\eta = \frac{1}{\sigma_1^2}$, then 
$$
\frac{1}{2\sqrt{n}}(\frac{1}{\sqrt{2\pi}})^{n-1}
\int_{0}^{\infty}
\eta^{n/2 - 1}
\exp\{-\beta\eta\}
d\eta
$$
This integral is the kernel of a Gamma distribution which results in
$$
\boxed{m_1(\textbf{y}) = \frac{1}{2\sqrt{n}}(\frac{1}{\sqrt{2\pi}})^{n-1}
	\frac{\Gamma(n/2)}{(\frac{\sum(y_i - \bar{y})^2}{2})^{n/2}}}
$$
After obtaining $m_0$ and $m_1$, we can calculate the Bayes Factor upper bound
$$\boxed{\overline{B}_{10} =  \sqrt{2/n}(\frac{\sum(y_i - \mu_0)^2}{\sum(y_i - \bar{y})^2})^{n/2}}.$$
Similarly, the Bayes Factor Lower bound is
$$\boxed{\underline{B}_{01} = \sqrt{n/2}(\frac{\sum(y_i - \bar{y})^2}{\sum(y_i - \mu_0)^2})^{n/2}.}$$

\chapter{The Exponential Model Hypothesis Test}

In the exploration of statistical hypothesis testing, this chapter embarks on a comprehensive investigation into exponential hypothesis testing. Specifically, we delve into the comparison of two hypotheses $H_0: \lambda = \lambda_0$ and $H_1: \lambda \neq \lambda_0$ where $\lambda$ is the parameter for an exponential density $f(y|\lambda) = \lambda e^{-\lambda y}$. We will employ several robust methodologies to asses this hypothesis test from different but consistent approaches.

Our focus centers on evaluating and contrasting several pivotal methodologies: the EP Approach, the $-ep\log(p)$ lower bound, the Universal Robust Bound for the Intrinsic Bayes Factor, and the SP Approach. These methodologies serve as our guiding compasses in the pursuit of discerning their effectiveness in hypothesis validation. We rely on simulations to shed light on the performance and consistency of these methodologies. By employing these simulations, we aim to craft illustrative charts that will serve as visual aids, elucidating the comparative efficacy and nuances of each approach. This chapter unfolds as a meticulous exploration, aiming not only to identify the most effective methodology but also to unravel the intricacies underlying their application.

\hspace{10pt}The exponential density is defined as
\[
f(y|\lambda) = {\lambda}e^{-\lambda y}, \mbox{ where } y \ge 0 \mbox{ and } \lambda > 0 
\]
The likelihood, Jeffrey's prior, and posterior distribution for an exponential density with parameter $\lambda$ are given by 
$$
L(\lambda|\textbf{y}) = {\lambda^n}\exp(\displaystyle -\lambda \sum_{i=1}^{n}y_i), \hspace{5pt} \pi^N(\lambda) \propto \frac{1}{\lambda},\hspace{5pt}
\pi(\lambda|\textbf{y}) = \frac{L(\lambda|\textbf{y})\pi_J(\lambda)}{m(\textbf{y})}
$$
After calculating the likelihood and Jeffrey's prior for the exponential density we can easily obtain the following marginal
\[
m(\textbf{y}) \propto \int_{0}^{\infty} {\lambda^{(n-1)}}\exp(-\displaystyle{\lambda\sum_{i=1}^{n}y_i})d\lambda = \displaystyle\frac{(\sum_{i = 1}^{n}y_i)^{n}}{\Gamma(n)}
\]
Finally, we can obtain the posterior distribution for $\lambda$
\[
\boxed{\pi(\lambda|\textbf{y}) \propto
	\frac{\displaystyle{\Gamma(n)}{\lambda^{n-1}}\exp \Big(-{\lambda}{\sum_{i=1}^{n} y_i}\Big)}{{(\sum_{i = 1}^{n}y_i)^{n} }}
}
\]
In the following sections, we'll tackle a hypothesis test:
$$H_0: \lambda = \lambda_0 \mbox{ vs. } H_1: \lambda \neq \lambda_0$$We'll use different methods to calculate Bayes Factors, bounds, and priors. This helps us compare traditional approaches with our new methods, giving us a clearer understanding of their outcomes.

\section{The Intrinsic Bayes Factor Dynamic and Universal Lower Bounds}
\hspace{10pt}In this section, we're computing our new bound for the Intrinsic Bayes Factor. To derive this bound, we start by examining the Bayes factor for the minimal training sample:
$$B_{01}^{N}(y(\ell)) = \frac{\mathbf{y(\ell)}\lambda_0}{\exp^{\lambda_0\mathbf{y(\ell)}}}$$
Additionally, we need the full Bayes factor for this test
$$B_{01}^{N}(\textbf{y}) = \frac{\lambda_0^n \exp(-\lambda_0 \sum y_i)}{\int_{0}^{\infty}{\lambda^n \exp(-\lambda \sum y_i)}\frac{1}{\lambda}d\lambda} = \frac{\lambda_0^n \exp(-\lambda_0 \sum y_i)(\sum y_i)^n}{\Gamma(n)}
$$
By minimizing the first Bayes factor, we obtain the Empirical IBF lower bound
$$ \boxed { \underline{B}^{I^{*}}_{01} = \frac{(\sum
		y_i)^{n}\lambda_{0}^{n-1}}{\textbf{y}^{*}(\ell)\Gamma(n)}\exp(\lambda_0[ y^{*}(\ell) - \sum y_i]),  \hspace{5pt}y^{*}(\ell) = \arg \displaystyle\min_{y_i \in \{y_1,...,y_n\}}\Big\{\frac{\mathbf{y_i}\lambda_0}{\exp^{\lambda_0\mathbf{y_i}}}\Big\}}$$

Our next goal is to obtain the theoretical IBF bound, and to achieve this we need to find $\displaystyle \sup_{y(\ell) \in D}B^{N}_{01}(y(\ell))$. This maximization can be attained by solving the following equation
\begin{align*}
	\frac{d}{dy(\ell)}[\log(B_{01}^{N}(y(\ell))] &= \frac{d}{dy(\ell)}[\log(y(\ell)) + \log(\lambda_0) - \lambda_0 y(\ell)] &= 0\\
	&= \frac{1}{y^{*}(\ell)} + \lambda_0 &= 0\\
	&y^{*}(\ell) = \frac{1}{\lambda_0}
\end{align*}
To verify that this is a maximum, we take the second derivative with respect to $y(\ell)$
$$\frac{d^2}{dy(\ell)}[\log(B_{01}^{N}(y(\ell))] = \frac{-1}{y(\ell)^2} < 0, \forall \mbox{ }y(\ell) \in \mathbb{R}$$
Therefore, $y^{*}(\ell)$ the supremum in $(0,\infty)$ and $B_{01}^{N}(y^{*}(\ell)) = 1/e$. By using these results, we can obtain the following Intrinsic Bayes Factor upper bound 
$$\boxed{\overline{B}^{I}_{10} = 1/e \frac{\Gamma(n)\exp(\lambda_0\sum y_i)}{(\sum y_i)^{n}\lambda_{0}^{n}}}.$$
Similarly, the IBF lower bound is given by
$$\boxed{ \underline{B}_{01}^{I} = \frac{e(\sum
		y_i)^{n}\lambda_{0}^{n}}{\Gamma(n)\exp(\lambda_0\sum y_i)}}$$
\section{The SP Bayes Factors}
In this section, our focus lies in computing the SP priors and the SP Bayes factor for the exponential hypothesis test. We aim to contrast the outcomes from the SP approach with the intrinsic Bayes factor bounds derived earlier. To bolster our findings, we'll conduct simulations and generate illustrative charts. These visual aids will fortify and elucidate our conclusions.

\hspace{10pt}The theoretical SP Priors for the exponential model for $B_{10}$ are given by
$$ \boxed{\pi_0^{SP}(\lambda) =  1, \hspace{5pt} \pi_1^{SP}(\lambda) =  \frac{1}{\lambda_0}\exp^{-\lambda/\lambda_0},\hspace{5pt} \lambda > 0}$$
It's noteworthy that $\pi_1^{SS}(\lambda)$ follows an exponential distribution with the parameter $1/\lambda_0$. 
%Let $1/\lambda_0 \in \textbf{y}$ and 
Let's assume, without loss of generality, that $y_1 = 1/{\lambda_0}$. The SP marginals for the hypothesis test $H_0: \lambda = \lambda_0 \text{ versus } H_1: \lambda \neq \lambda_0$ are
%$$ m^{SP}_0(\textbf{y}) = \lambda_0^{n-1} e^{-\lambda_0\sum_{i=2}^{n} y_i} $$
%$$ m^{SP}_1(\textbf{y}) = \frac{1}{\lambda_0}\int_{0}^{\infty}\lambda^{n-1}e^{-\lambda (\sum_{i=2}^{n} y_i + 1/\lambda_0)}d\lambda = \frac{1}{\lambda_0}[\Gamma(n)(\sum_{i=2}^{n} y_i + \frac{1}{\lambda_0})^{(-n)}] = \frac{1}{\lambda_0}[\Gamma(n)(\sum_{i=1}^{n} y_i)^{(-n)}]$$
%and the Bayes factor obtained using the SP prior is
%$$\boxed{  B^{SP}_{10} =   
	%\frac{m_1^{SP}(\textbf{y})}{m_0^{SP}(\textbf{y})} = \frac{\exp\{\lambda_0\displaystyle\sum_{i=2}^{n}y_i\}\Gamma(n)}{\lambda_0^{n}(\sum y_i)^n} }$$
%In this case, $y_{(1)} = y_1 = \frac{1}{\lambda_0}$, then 
%$$ B_{10}^{SP} = \frac{y_{(1)}\exp\{\lambda_0\displaystyle[\sum_{i=1}^{n}y_i - y_{(1)}]\}\Gamma(n)}{\lambda_0^{n-1}(\sum y_i)^n} = \overline{B}^{I^{*}}_{10}$$
%The SP-Marginal if $\frac{1}{\lambda_0} \notin \textbf{y}$,
\begin{align*}
	m^{SP}_0(\textbf{y}) &\propto \lambda_0^{n} e^{-\lambda_0\sum_{i=1}^{n} y_i} \\
	&= \lambda_0^{n} e^{-\lambda_0\sum_{i=1}^{n}y_i}
\end{align*}
\begin{align*}
	m^{SP}_1(\textbf{y}) &\propto \frac{1}{\lambda_0}\int_{0}^{\infty}\lambda^{n}e^{-\lambda (\sum_{i=1}^{n} y_i + \frac{1}{\lambda_0})} \\
	&= \frac{1}{\lambda_0}[\Gamma(n+1)(\sum_{i=1}^{n} y_i + \frac{1}{\lambda_0})^{(-(n + 1))}] \\
	&= \frac{1}{\lambda_0}[\Gamma(n+1)(\sum_{i=1}^{n} y_i+ \frac{1}{\lambda_0})^{-(n+1)}]
\end{align*}
Utilizing these marginals leads to the results of the SP Bayes factor as follows:
$$ \boxed{B^{SP}_{10} =   
	\frac{\Gamma(n+1)e^{\lambda_0\sum _{i=1}^{n}y_i}}{(\sum_{i=1}^{n} y_i + \frac{1}{\lambda_0})^{n+1}\lambda_0^{n+1}} }$$
In order verify the validity of our calculations, and to compare the numerical values of $B^{SP}_{10}$ with $\overline{B}^{I}_{10}$, a simulation was conducted and the corresponding chart was obtained. The simulation was implemented using the following code snippet:\\ \\
\textbf{R Program}
\begin{lstlisting}
	b10_theoretical = function(n,h0,lambda=lambda0) { 
		y = c(); b10_t = c(); b10_sp = c(); s = c(); e = exp(1)
		for (i in seq(1,n,1)) {
			y = c(y,rexp(1,rate=1/lambda))
			s = sum(y)
			val = -log(e,e) + - n*log(s,e) - n*log(h0,e) + log(gamma(n),e)+ h0*s
			b10_t = c(b10_t,val)  
			val = log(gamma(n+1),e) + h0*s - (n+1)*log(s + 1/h0,e) - (n+1)*log(h0,e)
			b10_sp = c(b10_sp,val)
		}
		plot(seq(1,n,1),b10_t,type="l",col="red",
		xlab="Number of samples",
		ylab="log(BF10)", ylim=c(min(b10_t,b10_sp),max(b10_t,b10_sp)),
		main="Theorical SS Bayes Factor (Red) vs SP Bayes Factor (Blue) (H0 TRUE)")
		lines(b10_sp,col="blue")
	}
\end{lstlisting}
\begin{figure}[H]
	\centering
	\includegraphics[width=0.9\textwidth]{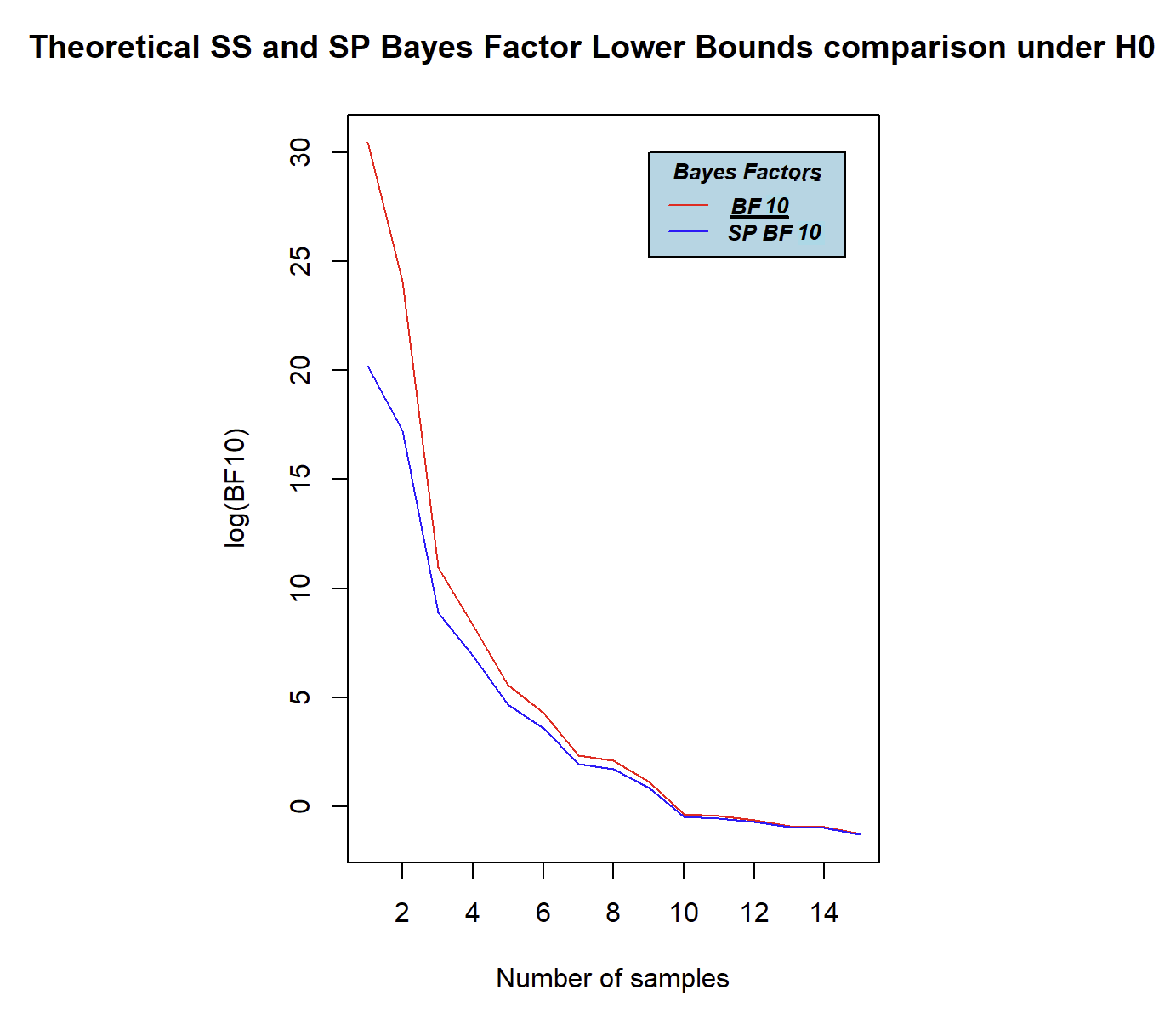}
	\caption{$\overline{B}^{SS}_{10}$ vs $B^{SP}_{10}$ under $H_0$ for increasing sample size and exponential data.}
	\label{fig:expspbf}
\end{figure}
This comparison serves to analyze the numerical relationship between ${B}^{SP}_{10}$ and $\overline{B}^{I}_{10}$, thereby contributing to the evaluation and understanding of their respective characteristics. The obtained chart provides a visual representation of the convergence of the $\overline{B}^{I}_{10}$ to ${B}_{10}^{SP}$ as the number of samples increases.
\newpage
\section{The EP Bayes Factors}
\hspace{10pt}The exponential Expected posterior prior under the alternative (EP-Prior) for the exponential hypothesis test $H_0: \lambda = \lambda_0 \mbox{ vs } H_1: \lambda \neq \lambda_0$ can be calculated as
$$ \pi_1^{EP}(\lambda) = \frac{\displaystyle\sum_{i=1}^{L}\pi(y(\ell)|\theta_i)}{n} = \frac{\displaystyle\sum_{i=1}^{n}y_ie^{-\lambda y_i}}{n}$$
It can be verified that the prior in question integrates unity with respect to $\lambda$ by performing integration overall positive values of $\lambda$. Additionally, the SP prior, which is an exponential distribution with a parameter of $1/{\lambda_0}$, also integrates one.

The objective of this study is to compare the SP prior with the EP prior under different sample sizes, specifically for the cases of $n=10$ and $n=100$. To achieve this, simulations were conducted utilizing the following R code:\\
\textbf{R Program}
\begin{lstlisting}[language=R]
	prior = function(data,lambda0,lambda) {
		data = c(); object = c(); lambda0 = 0; ep1 = c(); ss1 = rep(0,length(lambda)); ss1 = (1/lambda0)*exp(-lambda/lambda0)
		for (j in 1:length(lambda)) {
			epnew = 0
			for (i in 1:length(data)) { epnew = epnew + data_[i]*exp(-data[i]*lambda[j]) }
			ep1 = c(ep1,epnew)
		}
		object$ss = ss1; object$ep = ep1/length(data)
		return(object)
	}
	data = rexp(n=100,r=1);lambda0=1; lambda = seq(0.1,2,0.1)
	object = prior(data = data,lambda0=lambda0,lambda=lambda);
	ss = exp(-lambda); ep = object$ep
	plot(lambda,ep,col="red",ylim=c(min(ss,ep),max(ss,ep)),type="b",ylab="Prior",
	main="EP Prior under H1 (Red) / SP Prior under H1 (Blue)"; 
	lines(lambda,ss,col="blue")
\end{lstlisting}
\hspace{10pt} By employing these simulations, we aim to investigate and evaluate the performance of the SP prior relative to the EP prior in the context of various sample sizes. The obtained results will contribute to enhancing our understanding and analysis of the behavior and characteristics of these priors.
\newpage
\begin{figure}[ht]
	\centering
	\subfigure[\centering Exponential simulation with 10 samples for $\lambda_0 = 1$]{{\includegraphics[width=8cm]{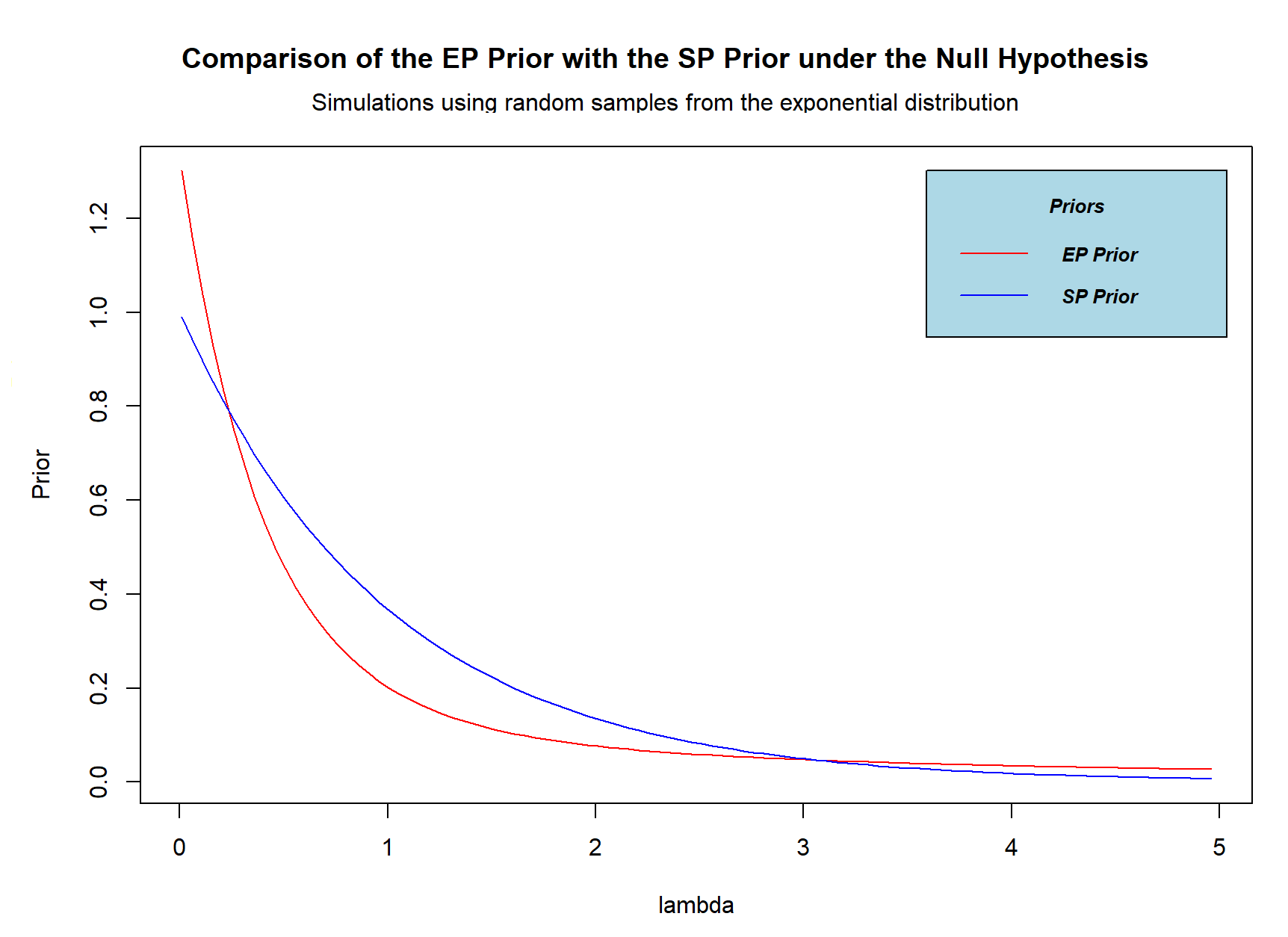} }}%
	\qquad
	\subfigure[Exponential simulation with 100 samples for $\lambda_0 = 1$]{{\includegraphics[width=8cm]{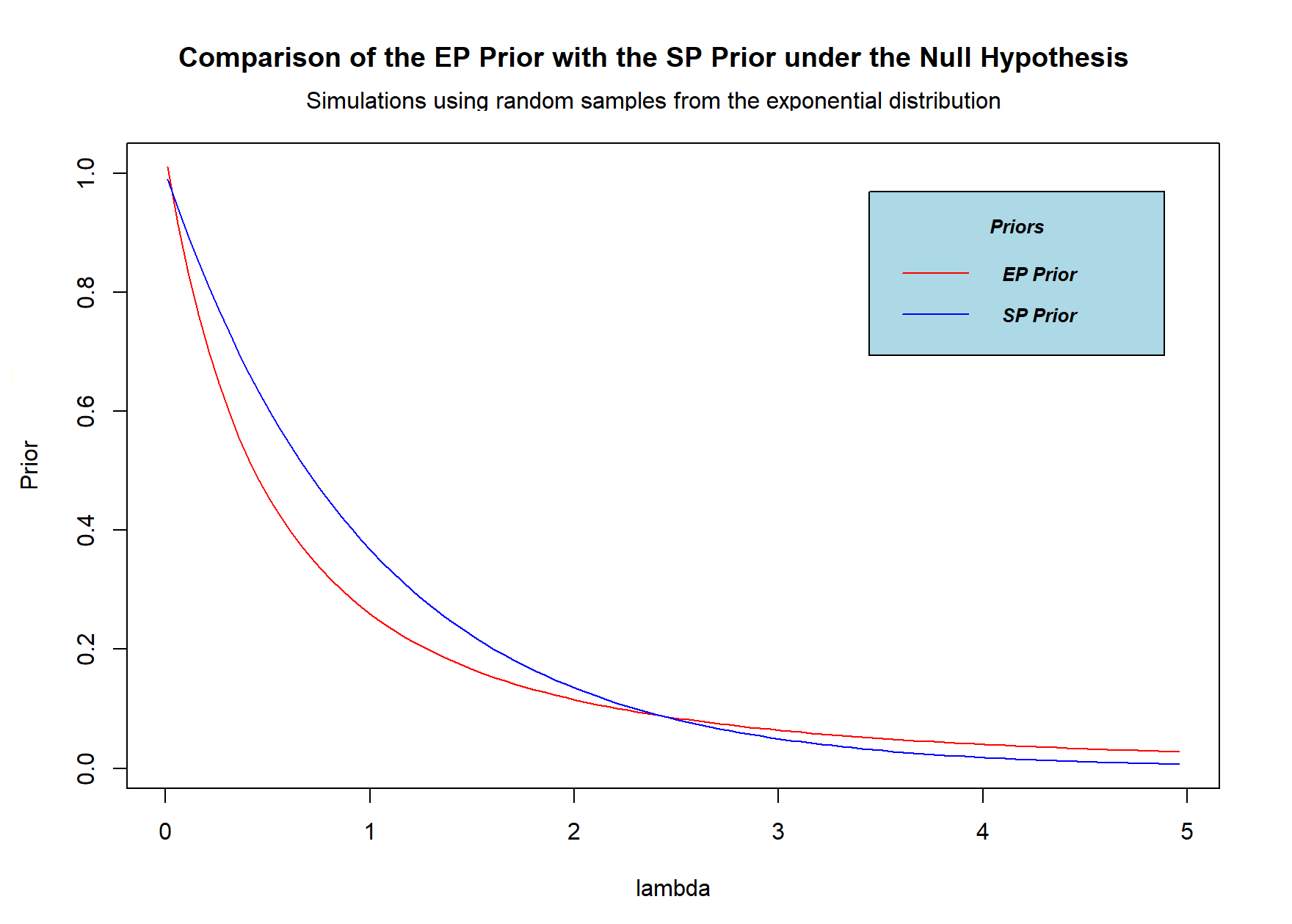} }}%
	\caption{Simulations with different number of samples for the exponential SP and EP Priors for $\lambda_0=1$}%
	\label{fig:example}%
\end{figure}
The exponential EP-Bayes factor for $\lambda = \lambda_0 \mbox { vs } \lambda \neq \lambda_0$ can be obtained by calculating the following expression
$$ B_{10}^{EP}(\textbf{y}) = \frac{m_{1}^{EP}(\textbf{y})}{m_{0}^{EP}(\textbf{y})} = \frac{\frac{1}{n}\displaystyle\int_{0}^{\infty} \lambda^{n}\exp\{-\lambda\sum_{i=1}^{n} y_i \}\sum_{i=1}^{n}y_i\exp\{-\lambda y_i\} d\lambda}{\lambda_0^{n}\exp\{-\lambda_0\displaystyle\sum_{i=1}^{n} y_i \}} $$
Our objective is to resolve the integral mentioned earlier. To facilitate this, let's define $S = \displaystyle\sum_{i=1}^{n}y_i$. Subsequently, the integral transforms into:
$$ \displaystyle\sum_{i=1}^{n}y_i\int_{0}^{\infty}\lambda^{n}\exp\{-\lambda(S + y_i)\}d\lambda$$
Consider the change of variable $\alpha = n + 1$ and $\beta_i = \frac{1}{S + y_i}$, which leads to the transformation:
$$\displaystyle\sum_{i=1}^{n}y_i\int_{0}^{\infty}\lambda^{n}\exp\{-\lambda(S + y_i)\}d\lambda = \displaystyle\sum_{i=1}^{n}y_i\int_{0}^{{\infty}}\lambda^{\alpha - 1}\exp\{-\lambda/\beta_i\}d\lambda$$
This new form represents the kernel of a Gamma density with parameters $\alpha$ and $\beta_i$, resulting in the integral:
$$\Gamma(n+1)\sum_{i=1}^{n}y_i\Big(\frac{1}{S+ y_i}\Big)^{n+1}$$
Hence, the EP Bayes Factor, in this case, becomes:
$$ \boxed{ B_{10}^{EP}(\textbf{y}) = \frac{\frac{1}{n}\Gamma(n+1)\displaystyle\sum_{i=1}^{n}y_i\Big(\frac{1}{S+ y_i}\Big)^{n+1}}{\lambda_0^{n}\exp\{-\lambda_0\displaystyle\sum_{i=1}^{n} y_i \}} } $$
\hspace{10pt}The principal objective of this numerical experiment is to compare the EP and SP Bayes factors, previously derived, by creating 100 simulations, each consisting of 100 samples. The process entails averaging the Bayes factor for each sample size, ranging from 1 to 100. Furthermore, we intend to plot the logarithm of each Bayes factor on the same axis to discern their respective behaviors under two distinct scenarios: when the null hypothesis is valid and when it is not. The forthcoming R program has been devised to generate the anticipated output.
\newpage
\textbf{R Program}
\begin{lstlisting}[language=R]
	#Initialize vectors and variables
	finallogbf10 = c(); finallogbf10sp=c(); nsim = 100; n = 100;
	data = c(); s = 0; sumdata = 0;logbf10=c();logbf10sp=c();
	#H0 TRUE
	true_lambda = 1; lambda0 = 1;
	#Use this for H0 FALSE
	#true_lambda = 1; lambda0 = 0.3;
	for (k in 1:nsim) {
		#Generate random samples
		data =rexp(n=n,rate=true_lambda)
		for (i in 1:n) {
			s = sum(data[1:i])
			for (j in 1:i) { sumdata = sumdata + data[j]*((1/(s+data[j]))^(i+1)) }
			#EP marginals
			m1_ep = (gamma(i+1)*sumdata)/i; m0_ep = (lambda0)^i*exp(-lambda0*s)
			#Log EP Bayes factor
			logbf10 = c(logbf10,log(m1_ep) - log(m0_ep))
			#SP Marginals    
			m1_sp = gamma(i+1)*exp(lambda0*s); m0_sp = ((s + 1/lambda0)^(i+1) )*(lambda0^(i+1))
			#Log SP Bayes Factor
			logbf10sp = c(logbf10sp,log(m1_sp) - log(m0_sp))
			sumdata = 0
		}
		#Storing simulations
		finallogbf10 = cbind(finallogbf10,logbf10); finallogbf10sp = cbind(finallogbf10sp,logbf10sp)
	}
	logbf10_avg = rep(0,n); logbf10sp_avg = rep(0,n);
	#Averaging Bayes factors for all the simulations
	for (i in 1:n) {
		logbf10_avg[i] = mean(finallogbf10[i,]); logbf10sp_avg[i] = mean(finallogbf10sp[i,])
	}
	x = 1:n
	y1 = logbf10_avg[x]; y2 = logbf10sp_avg[x]
	if (lambda0 == true_lambda) { h0 =TRUE } else { h0 = FALSE }
	plot(x,y1,type="l",col="red",ylim=c(min(y1,y2),max(y1,y2)), main="log(EP Bayes Factor 10) (Red) vs log(SP Bayes Factor 10) (Black)",xlab="Sample size",ylab="Log Bayes Factor 10")
	lines(x,y2,type="l",col="black")
	mtext(paste("H0 is ",h0," / Number of simulations: ",nsim," / Number of samples: ",n," / lambda = ",true_lambda, " / lambda_0 = ",lambda0, sep=""),side=3)
	
\end{lstlisting}
\newpage
%\begin{figure}[ht]
%    \centering
%    \subfloat[\centering Logarithm of the SP and EP Bayes factor using 100 Exponential simulations (100 samples each) for $\lambda_0 = 1, H_0$ True]{{\includegraphics[width=0.9\textwidth]{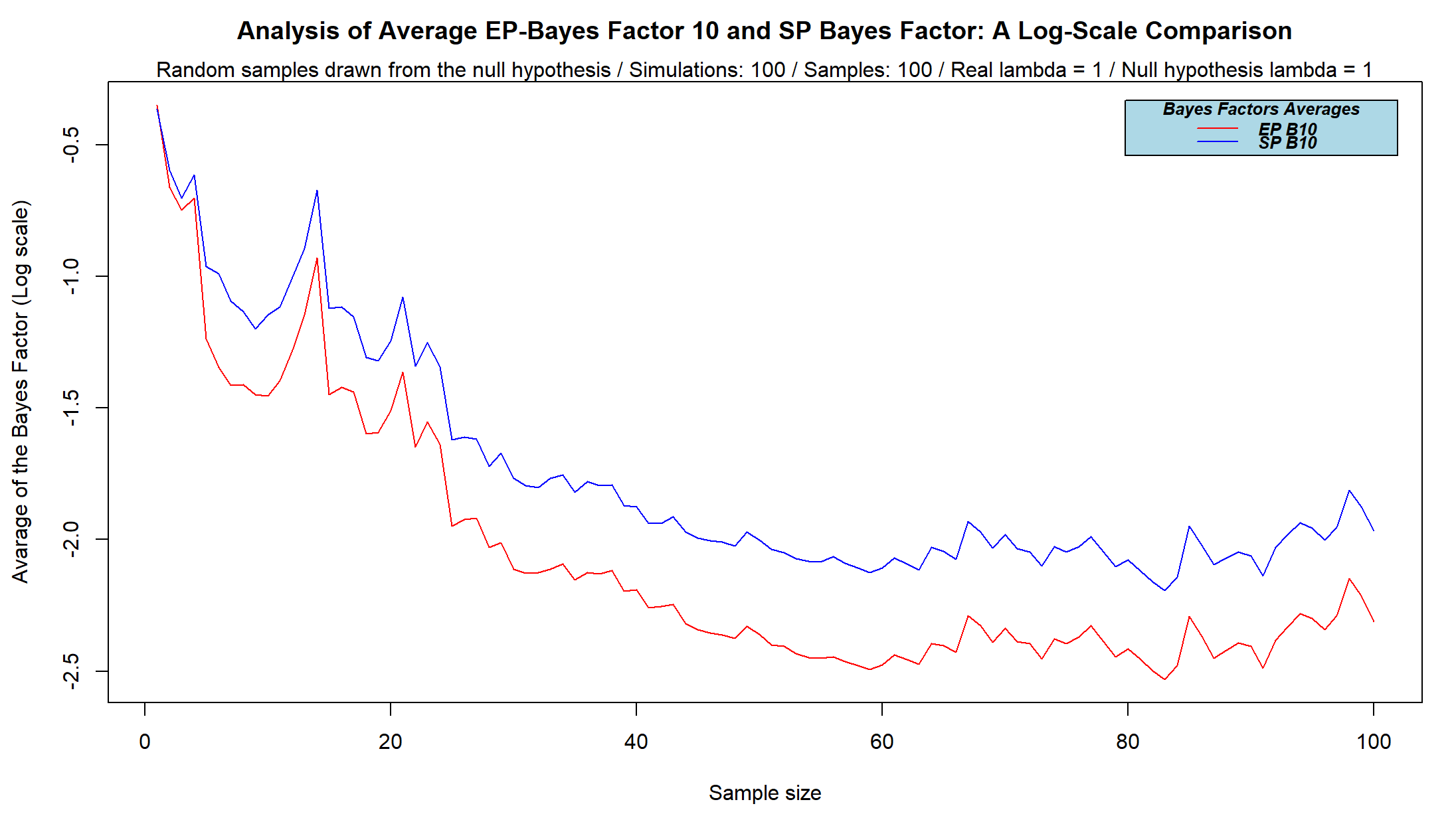.png} }}%
%    \qquad
%    \subfloat[Logarithm of the SP and EP Bayes factor using 100 Exponential simulations (100 samples each) for $\lambda_0 = 1, H_0$ False]{{\includegraphics[width=0.9\textwidth]{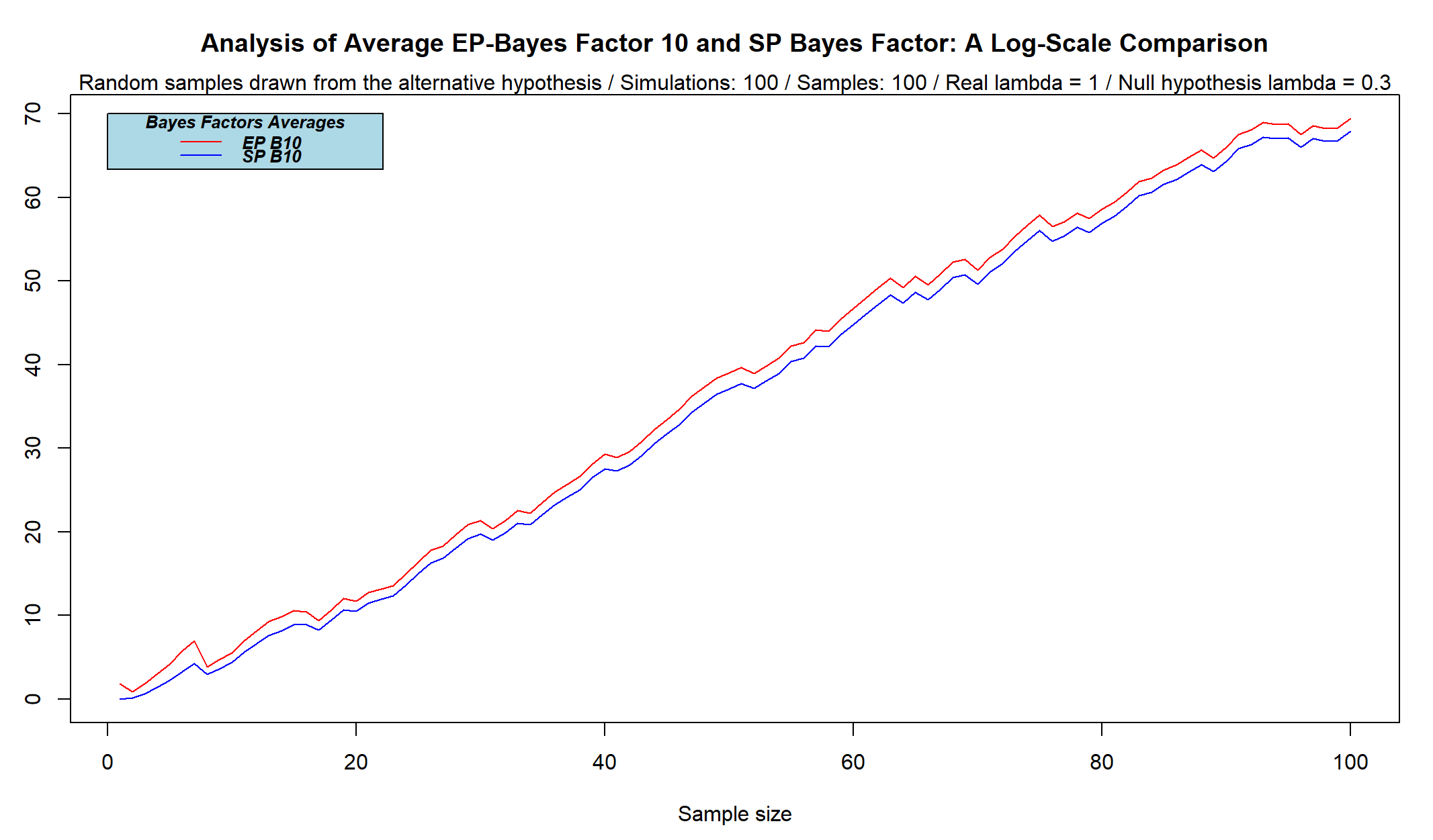} }}%
%    \caption{Comparison of the EP and SP Bayes factor for the exponential mean hypothesis test. In both cases, the SP and EP Bayes factor show consistency.}%
%    \label{fig:example}%
%\end{figure}
\begin{figure}[ht]
	\centering
	\includegraphics[width=1\textwidth]{graphics/LogEPvsSPBayesFactorH0TRUE.png}
	\caption{Logarithm of the SP and EP Bayes factor using one-hundred exponential simulations (100 samples each) for $\lambda_0 = 1, H_0$ True}
	\label{fig:SPVSEPH0}
\end{figure}
Figure \ref{fig:SPVSEPH0} demonstrates the consistency of both the SP and EP Bayes factors. The figure illustrates that both Bayes factors approach zero when calculating the Bayes factor of the alternative hypothesis over the null hypothesis. This observation is expected since the data used for the calculations are random samples generated from the null hypothesis. The decreasing trend of the Bayes factors indicates their convergence towards zero, indicating consistency in the evidence provided by both methods.
\newpage
\begin{figure}[ht]
	\centering
	\includegraphics[width=1\textwidth]{graphics/LogEPvsSPBayesFactorH0FALSE.png}
	\caption{Logarithm of the SP and EP Bayes factor using one-hundred exponential simulations (100 samples each) for $\lambda_0 = 1, H_0$ False}
	\label{fig:SPVSEPH1}
\end{figure}
Figure \ref{fig:SPVSEPH1} demonstrates the consistency of both the SP and EP Bayes factors. The figure illustrates that both Bayes factors increase in value with the sample size when calculating the Bayes factor of the alternative hypothesis over the null hypothesis. This observation is expected since the data used for the calculations are random samples generated from the alternative hypothesis. The increasing trend of the Bayes factors indicates consistency in the evidence provided by both methods.
\newpage
\section{The -eplogp lower bound}
\hspace{15pt} In this section we revisit the Sellke, Bayarri, and Berger (2001) \cite{sellke2001calibration} lower bound for the Bayes factor that employs the classical p-value of the hypothesis test, represented as $-ep\log(p) \leq B_{01}$. We will perform a simulation to compare this this p-value approximation of the Bayes factor to contrast it with the intrinsic Bayes factor lower bound for the exponential test. In an attempt to compute the $-ep\log(p)$ lower bound of $B_{01}$, we determined the likelihood ratio for the hypotheses $H_0$ against $H_1$. The approximation of the p-value was accomplished through the utilization of Wilk's theorem and the chi-squared distribution. The graphical representation provided demonstrates the lower bound and the $-ep\log(p)$ in tandem as the sample size increases.

The forthcoming illustration elucidates how both the empirical and theoretical bounds converge toward the appropriate decision as the sample size escalates. This is demonstrated by drawing samples from an exponential distribution characterized by a known rate parameter, while simultaneously altering the null hypothesis.\newpage
\textbf{R Program}
\begin{lstlisting}[language=R]
	
	exponential_simulations=function(n_sim,n_data,h0,rate) {
		lambda = h0; sim = c(); bfbound = c(); bfssbound = c(); pval_seq= c(); epbound = c(); ssbound = c();
		for (j in 1:n_sim) {
			for (n in 1:n_data) {
				sim = c(sim,rexp(n=1,rate=rate))
				lrt_null = n*log(lambda) - lambda*sum(sim)
				lrt_alt = n*(log(n) - log(sum(sim)) - 1)
				lrt = exp(lrt_null-lrt_alt)
				ts = -2*log(lrt)
				#p-value approximation wilks theorem
				pval = 1-pchisq(ts,1)
				pval_seq = c(pval_seq,pval)
				#selke bound
				if (pval > (1/exp(1))) { bfbound = c(bfbound,1) }
				else { bfbound = c(bfbound,-exp(1)*pval*log(pval)) }
				bfssbound = c(bfssbound,(exp(1)*(sum(sim)*lambda)^n)/gamma(n)*exp(-lambda*sum(sim)))
			}
			epbound = cbind(epbound,bfbound); ssbound = cbind(ssbound,bfssbound)
		}
		epbound_avg = c(); ssbound_avg = c()
		for (i in 1:n_data) { 
			epbound_avg = c(epbound_avg,mean(epbound[i,]))
			ssbound_avg = c(ssbound_avg,mean(ssbound[i,]))
		}
		object = c(); object$epbound_avg = epbound_avg; object$ssbound_avg = ssbound_avg
		return(object)
	}  
	
	#100 Simulations and 100 Samples; H0 True
	n_sim = 100; n_data = 100; h0 = 1; rate = 1
	
	simulations = exponential_simulations(n_sim=n_sim,n_data = n_data,h0=h0,rate=rate)
	
	par(mfrow=c(1,2))
	plot(1:n_data,simulations$epbound_avg,type="l",col="red",xlab="Number of samples",ylab="Average of the Lower bounds",ylim=c(min(simulations$epbound_avg,simulations$ssbound_avg),max(simulations$ssbound_avg,simulations$epbound_avg)),
	main="Comparison between the -eplogp and IBF lower bounds")
	lines(simulations$ssbound_avg,col="blue")
	
	mtext(paste("Simulations: lambda = ",rate, " / Null hypothesis: lambda = ",h0, sep=""),side=3)
	legend(0, 10, legend=c("-eplogp", "SS"),col=c("red", "blue"), lty=c(1,1), cex=0.8,title="Lower bounds", text.font=4, bg='lightblue')
	
\end{lstlisting}
\newpage
By employing the aforementioned code, a simulation was carried out to calculate both the empirical and theoretical lower bounds of $B^{I}_{01}$ for the exponential distribution. This was executed to test the hypothesis $H_0$ against $H_1$ under two distinct circumstances: when $H_0$ holds true and when it does not. The objective was to examine the convergence of the Bayes factor lower bound toward the accurate decision as the sample size expands. The outcomes of this simulation are presented in the following figures.
%\begin{figure}[H]
%    \centering
%    \subfigure(a){\includegraphics[width=0.46\textwidth]{graphics/expbfssempirical.png}} 
%    \subfigure(b){\includegraphics[width=0.46\textwidth]{graphics/expbfssempiricalh0false.png}} 
%    \subfigure(c){\includegraphics[width=0.46\textwidth]{graphics/expbfss.png}}
%    \subfigure(d){\includegraphics[width=0.46\textwidth]{graphics/expbfssh0false.png}}
%    \caption{(a) Empirical $\underline{B}^{SS}_{01}$ when $H_0$ is true (b) Empirical $\underline{B}^{SS}_{01}$ when $H_0$ is false (c) Theoretical $\underline{B}^{SS}_{01}$ when $H_0$ is true (d) Theoretical $\underline{B}^{SS}_{01}$ when $H_0$ is false}
%    \label{fig:expssbf}
%\end{figure}
%\includegraphics[width=0.85\textwidth]{graphics/exp-ssbf-eplogp.png}
\begin{figure}[ht]
	\centering
	\includegraphics[width=12.5cm]{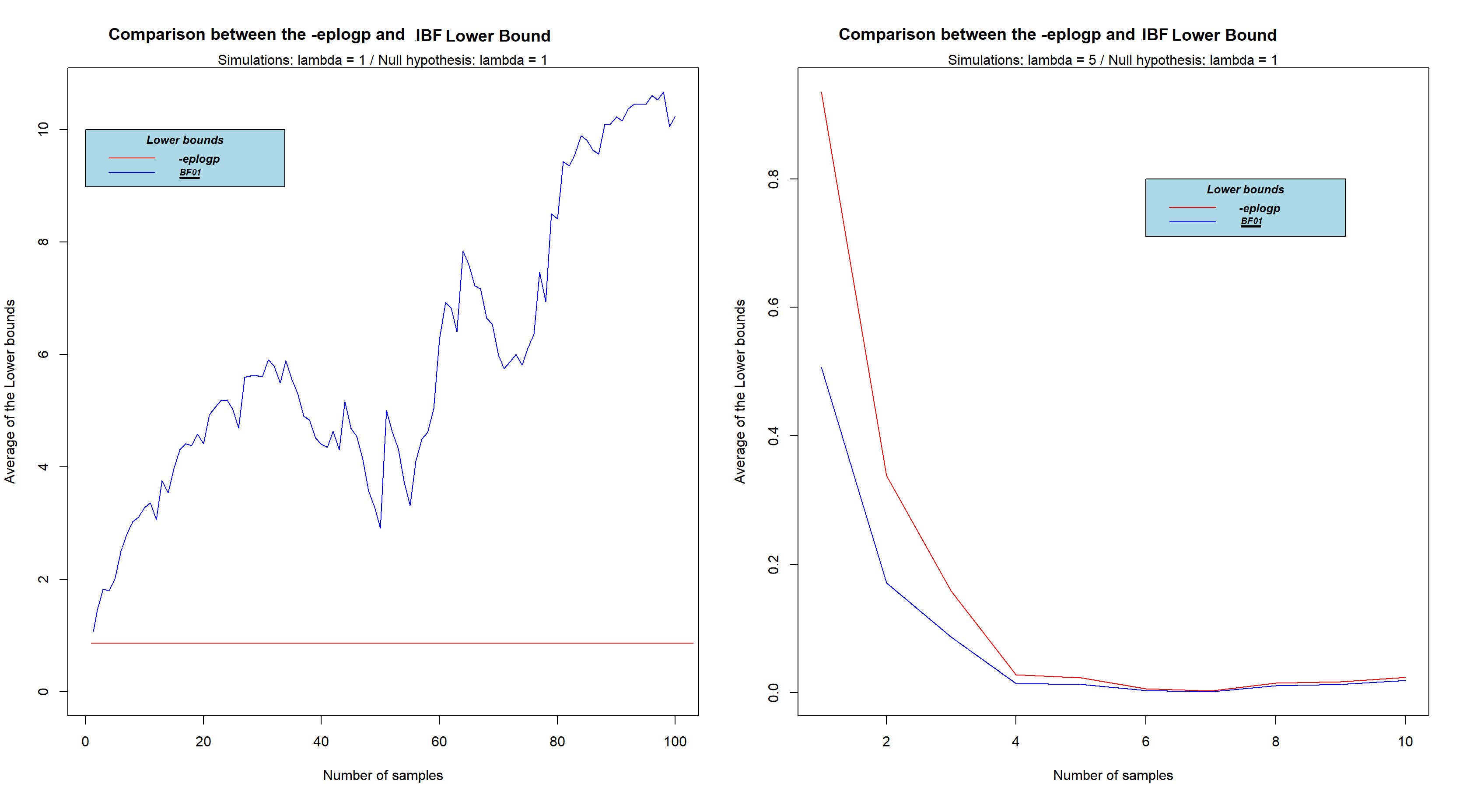}
	\caption{The $-ep\log(p)$ and $\underline{B}_{01}^{I}$  using 100 exponential simulations (100 samples each) for $\lambda_0 = 1$ (Null hypothesis true), and 100 exponential simulations with 10 samples each for $\lambda_0 = 1$ and the simulations rate parameter $r = 5$ (false null hypothesis)}
	\label{fig:ssvseph0}
\end{figure}
\\\\
In \textbf{Figure \ref{fig:ssvseph0}} we illustrated the consistency of the IBF lower bound in both scenarios. On the other hand, the second figure (right) illustrates that both Bayes factor's lower bounds approach zero when calculating the Bayes factor of the null hypothesis over the alternative hypothesis. This observation is expected since the data used for the calculations are random samples generated from the alternative hypothesis. The decreasing trend of the lower bounds indicates their convergence towards zero, indicating consistency in the evidence provided by both methods under the alternative, however, the $-ep\log(p)$ bound does not provide decisive evidence to accept the null hypothesis when we draw random samples from it, because the evidence is bounded by one.

\newpage
\section{Conclusions}

\hspace{10pt}Our research indicates that the Theoretical SP Bayes Factor, derived using the SP-Priors, is congruent with the Intrinsic Bayes factor Bound $\overline{B}^{I}_{10}$, albeit with the inclusion of an additional observation being the supremum of $B^{N}_{01}(y(\ell))$. We demonstrated that the SS-prior yields comparable results and that this methodology facilitates the generation of a lower bound for the theoretical SS Bayes factor.

As the sample size increases, the discrepancy between these two methods diminishes, given that we are more likely to encounter samples closer to the supremum (under $H_0$). In the context of comparing the EP and SP Priors for $H_1$, it becomes evident that the Theoretical SP prior remains unchanged for any sample, while the EP-prior fluctuates in accordance with the samples.

Moreover, it can be inferred from the simulations and the chart that the SP prior assigns a higher probability to the null hypothesis $\lambda_0 = 1$, owing to its wider area from 0 to 1 compared to the EP-prior for both $n=10$ and $n=100$. Both the EP-Bayes factor and the SP-Bayes factors exhibit similar patterns of convergence and values. However, the SP-Bayes factor presents higher values under the null and lower values under the alternative.

The consistencies observed in the results validate the results of the SP-Bayes factors and confirm the accuracy of the computations performed in this study.
\chapter{Linear regression and ANOVA Hypothesis Testing with Bayes Factors}
In the realm of statistical modeling, the avenues of linear regression and analysis of variance (ANOVA) stand as cornerstones, offering valuable insights into relationships between variables and differences among groups. Traditional frequentist approaches have long governed these methodologies, relying on p-values and significance testing. However, the burgeoning field of Bayesian statistics introduces a paradigm shift, emphasizing coherent inference and robust evidence assessment through Bayes factors.

This chapter embarks on an exploration of linear regression and ANOVA within the Bayesian framework. Departing from traditional methods, we delve into the application of Bayes factor approximations, building upon the discussions from our previous chapters. Moreover, we extend the foundations laid by Smith and Spiegelhalter, broadening our insights by calculating these approximations through the utilization of a generalized Jeffrey's prior.

ANOVA, a stalwart technique in comparing group means, undergoes a transformation in this chapter as we explore its foundations through Bayesian inference. The comparison of multiple groups finds new light as Bayes factors unveil nuanced insights into the evidence supporting differing hypotheses.
\section{Linear regression}
Linear regression is a statistical model that is commonly utilized in several disciplines, ranging from economics to engineering. At its core, linear regression is a conditional model where the outcome variable is predicated on a linear combination of the predictor variables, in conjunction with an unobserved error term that introduces variability into the relationship between the input and output variables.

The fundamental form of a simple linear regression model is as follows:
\begin{equation}
	Y = \theta_0 + \theta_1X + \varepsilon
\end{equation}
where $Y$ is the dependent or outcome variable, $X$ is the independent or predictor variable, $\theta_0$ is the Y-intercept, which represents the expected value of $Y$ when all $X$ are 0,
$\theta_1$ is the slope of the regression line, indicating the degree to which $Y$ changes for each unit change in $X$, and
$\varepsilon$ is the error term, embodying the difference between the observed and predicted values of $Y$.

The goal of linear regression analysis is to estimate the coefficients $\theta_0$ and $\theta_1$ that minimize the sum of the squared residuals, thus providing the "best fit" line for the observed data.

In a multiple linear regression scenario, the model is expanded to include more than one independent variable:
\begin{equation}
	Y = \theta_0 + \theta_1X_1 + \theta_2X_2 + ... + \theta_pX_p + \varepsilon
\end{equation}
where $p$ represents the number of predictor variables. Here, each $\theta_i$ (for $i = 1, 2, ..., p$) signifies the change in the expected value of $Y$ for each unit change in the corresponding predictor $X_i$, holding all other predictors constant.

Linear regression models assume linearity, independence, homoscedasticity (constant variance), and normality of residuals. Violations of these assumptions may necessitate the application of more complex models or transformations of the data. Linear regression serves as a fundamental building block in understanding more intricate statistical models.

\section{Two nested normal-linear models}
Consider the comparison of two nested normal-linear models $M_0 \subset M_1$, defined as in Smith and  Spiegelhalter (1980) \cite{smith1980bayesfactors} by
$$ M_i: \textbf{y} \sim N(\textbf{A}_i\theta_i,\sigma^2\textbf{I}_n), \hspace{10pt} i = 0,1$$
where $\textbf{A}_i$ is a full rank $p_i$ known matrix, $\textbf{y}$ is a vector with dimension $n$, $\boldsymbol{\theta_i} = (\theta_{i1},...,\theta_{ip_i})$ is a vector of $p_i$ unknown parameters and $\sigma^2$ is unknown. This can be written in matrix notation as
$$ {\displaystyle \textbf{y} = \textbf{A}_i\theta_i + \epsilon_i}, i = 0,1$$
where $\boldsymbol{\epsilon}_i \sim N(\textbf{0},\sigma^2 I_n)$ for $i = 0,1$. Let
$$ \hat{\theta}_i = (A_i^T A_i)^{-1}A_i^T\textbf{y} \mbox { and } \textbf{R}_i = |\textbf{y} - A_i\hat{\theta}_i|^2$$
denote the least squares estimator of $\theta_i$ and the residual sum of squares, respectively.
The Bayes factor in this case is given by
$$B_{01} = \frac{\int \int p(\textbf{y}|\textbf{A}_0,\boldsymbol{\theta}_0,\sigma)p(\boldsymbol{\theta}_0,\sigma|\textbf{A}_0)d\boldsymbol{\theta}_0 d\sigma}{\int \int p(\textbf{y}|\textbf{A}_1,\boldsymbol{\theta}_1,\sigma)p(\boldsymbol{\theta}_1,\sigma|\textbf{A}_1)d\boldsymbol{\theta}_1 d\sigma}$$

Smith and  Spiegelhalter showed that for the case when $i=0,1$, $p(\boldsymbol{\theta_i},\sigma | \textbf{A}_i)$ has an improper limit form representing vague prior information for all parameters inside each linear model. The improper limiting version of the normal-inverse-$\chi^2$ conjugate prior can be written in the form
$$ p(\boldsymbol{\theta}_i,\sigma|\textbf{A}_i) = p(\boldsymbol{\theta}_i|\textbf{A}_i,\sigma)p(\sigma) = c_i(2\pi\sigma^2)^{-p_i/2}\sigma^{-1}$$
and the likelihood is given by
$$p(\textbf{y}|A_i,\theta_i,\sigma) = (\frac{1}{\sqrt{2\pi}\sigma})^{n}\exp\{-\frac{1}{2\sigma^2}\Big[\textbf{R}_i(\textbf{y}) + (\theta_i - \textbf{y})^T A_i^T A_i (\theta_i - \textbf{y})\Big]\}.$$
The Bayes factor in this case is given by:
\begin{equation}\label{generallmbf}
	B_{01} = \frac{c_0}{c_1}[|\textbf{A}_1^T \textbf{A}_1|/|\textbf{A}_0^T \textbf{A}_0|]^{\frac{1}{2}}\big[1 + \frac{(p_1 - p_0)}{(n-p_1)}F\big]^{-(n/2)}
\end{equation}
Here, $F$ represents the $F$-test statistic for comparing models $M_0$ and $M_1$. Due to the ratio of undefined constants, this Bayes factor is inherently indeterminate. The formula for the $F$-test statistic is:
$$
{\displaystyle F={\frac {\left({\frac {\textbf{R}_{0}-\textbf{R}_{1}}{p_{1}-p_{0}}}\right)}{\left({\frac {{\textbf{R}}_{1}}{n-p_{1}}}\right)}},}
$$ 
Moreover, the $F$ value used in the Bayes Factor equation can be derived from the following identity:
$$ \frac{\textbf{R}_0}{\textbf{R}_1} = 1+ \Big(\frac{p_1 - p_0}{n-p_1}\Big)F$$
Spiegelhalter and Smith proposed a satisfactory solution to the problem of determining the ratio $c_0/c_1$ by introducing the concept of an imaginary training sample. Let $\textbf{A}_0(\ell)$ and $\textbf{A}_1(\ell)$ be the design matrix of $M_0$ and $M_1$ occurring in the "thought experiment" generating the imaginary training sample, they obtained the following result
\begin{equation}\label{constants_ratio}
	\frac{c_0}{c_1} = [|\textbf{A}_1(\ell)^T\textbf{A}_1(\ell)|/|\textbf{A}_0(\ell)^T \textbf{A}_0(\ell)|]^{-\frac{1}{2}}
\end{equation} 
Our goal is to calculate the SS Bayes Factor, in general, using the generalized prior $\pi(\theta_i,\sigma) \propto {\sigma}^{-(1+q_i)}$ where different values of q will result in well-known priors such as the reference, complete and modified Jeffrey's priors. Additionally, we will calculate this Bayes factor for ANOVA, to compare it with the results obtained by Spiegelhalter and Smith (1980)\cite{smith1980bayesfactors}.

Suppose that we want to generalize the result above and compare $M_0$ vs $M_1$ where $M_0$ has $p_0$ unknown parameters, and $M_1$ has $p_1$ unknown parameters. Assume that our prior has the general form 
$$\pi(\theta_i,\sigma
|A_i) \propto \sigma^{-(1+q_i)}, i = 0,1$$
where there are many possible choices for $q_i$. According to Berger and Pericchi (1996) \cite{berger1996intrinsic}, $q_i = 0$ is the reference prior (example above), and $q_i = p_1 - p_0$ is the modified Jeffrey's prior.
In general, the marginal distribution for $n$ samples and $i = 0,1$ is given by
$$ m_i(\textbf{y}) = \displaystyle\int_{-\infty}^{\infty}\int_{0}^{\infty}\frac{c_i}{\sigma^ {1+q_i}}(\frac{1}{\sqrt{2\pi}\sigma})^{n}\exp\{-\frac{1}{2\sigma^2}\Big[\textbf{R}_i(\textbf{y}) + (\theta_i - \textbf{y})^T A_i^T A_i (\theta_i - \textbf{y})\Big]\}d\theta_i d\sigma$$
Where $\textbf{R}_i(\textbf{y}) = (\textbf{y}- A_i\hat{\theta_i})^T(\textbf{y}- A_k\hat{\theta_i})$. Since the second term inside the exponential function is a multivariate normal distribution, we can integrate it with respect to $\theta_i$. Let $\dim(\theta_i) = p_i$, then the marginal can be written as:
$$ m_i(\textbf{y}) = c_i\displaystyle\Big(\frac{1}{\sqrt{2\pi}}\Big)^{n-p_i}|\textbf{A}_i^{T} \textbf{A}_i|^{-1/2}\int_{0}^{\infty}(\frac{1}{\sigma})^{n + q_i - p_i + 1 }\exp\{-\frac{\textbf{R}_i(\textbf{y})}{2\sigma^2}\} d\sigma.$$
We can solve this integral by employing the following change of variables
$$ \eta = \frac{\textbf{R}_i(\textbf{y})}{2\sigma^2}, \hspace{10pt} \sigma = \frac{\textbf{R}^{1/2}_i(\textbf{y})}{\sqrt{2}\eta^{1/2}}, \hspace{10pt} d\sigma = -\frac{\textbf{R}^{1/2}_i(\textbf{y})}{\sqrt{2}}\frac{1}{2}\frac{1}{\eta^{3/2}}d\eta $$
\begin{align*}
	m_i(\textbf{y}) &= \Big(\frac{1}{\sqrt{2\pi}}\Big)^{n-p_i}c_i|\textbf{A}_i^{T} \textbf{A}_i|^{-1/2}\int_{0}^{\infty}\exp^{-\eta}\eta^{\frac{n + q_i + 1 - p_i}{2}}\Big( \frac{2}{\textbf{R}_i(\textbf{y})} \Big)^{(n + q_i + 1- p_i)/2}\Big(\frac{\textbf{R}_i(\textbf{y})}{2}\Big)^{1/2}\frac{1}{2}\frac{1}{\eta^{3/2}}d\eta\\
	&=2^{\frac{n+q-p_i - 2}{2}}\Big(\frac{1}{\sqrt{2\pi}}\Big)^{n-p_i}\Big( \frac{1}{\textbf{R}_i(\textbf{y})} \Big)^{\frac{n + q_i -p_i + 1}{2}}\Big(\textbf{R}_i(\textbf{y})\Big)^{1/2}|\textbf{A}_i^{T} \textbf{A}_i|^{-1/2}\frac{c_i}{2}\int_{0}^{\infty}\exp^{-\eta}\eta^{\frac{n + q_i- p_i + 3}{2} -  1}d\eta
\end{align*}
Solving the integral above yields
$$ \int_{0}^{\infty}\exp^{-\eta}\eta^{\frac{n + q_i - p_i + 3}{2} - 1}d\eta = \Gamma(\frac{n + q_i - p_i + 3}{2})$$
Therefore, the marginal for $M_i$ is
\begin{align*}
	m_i(\textbf{y}) &= c_i\mbox{ }2^{\frac{n+q_i-p_i}{2}}\Big(\frac{1}{\sqrt{2\pi}}\Big)^{n-p_i}\Big( \frac{1}{\textbf{R}_i(\textbf{y})} \Big)^{\frac{n + q_i- p_i + 1}{2}}\Big(\frac{\textbf{R}_i(\textbf{y})}{2}\Big)^{1/2}|\textbf{A}_i^{T} \textbf{A}_i|^{-1/2}\Gamma(\frac{n - p_i+q_i + 3}{2})\\
	&\propto c_i\sqrt{|\textbf{A}_i^{T} \textbf{A}_i|^{-1}\hspace{3pt}[4\pi]^{p_i}
		\hspace{3pt}\textbf{R}_i(\textbf{y})^{-(n+q_i - p_i)}}\hspace{3pt}
\end{align*}
After we obtain the marginals, we can calculate the Bayes factor as
$$ B_{01}^N(\textbf{y}) = \frac{m_0(\textbf{y})}{m_1(\textbf{y})} = \frac{c_0}{c_1}\sqrt{[4\pi]^{(p_0 - p_1)}\frac{|\textbf{A}_1^{T} \textbf{A}_1|}{|\textbf{A}_0^{T} \textbf{A}_0|} \frac{[\textbf{R}_1(\textbf{y})]^{n+q_1 - p_1}}{[\textbf{R}_0(\textbf{y})]^{n+q_0 - p_0}}}$$
The following step involves computing the Bayes factor for the minimal training sample. For this situation, where at least one observation per unknown parameter is necessary, the minimal training sample size can be found as:
$$n_{01}= \max\{p_0 + 1,p_1 + 1\}.$$
In this case, the Bayes factor for the minimal training sample takes the form:
$$ B_{10}^N(\textbf{y}(\ell)) = \frac{m_1(\textbf{y}(\ell))}{m_0(\textbf{y}(\ell))} = \frac{c_1}{c_0}\sqrt{[4\pi]^{(p_1 - p_0)}\frac{|\textbf{A}_0^{T}(\ell) \textbf{A}_0(\ell)|}{|\textbf{A}_1^{T}(\ell) \textbf{A}_1(\ell)|}\frac{[\textbf{R}_0(\textbf{y}(\ell))]^{n_{01}+q_0 - p_0}}{[\textbf{R}_1(\textbf{y}(\ell))]^{n_{01}+q_1 - p_1}}}$$
The previously discussed \textbf{Lemma 1} establishes that:
$$ B_{01}(\textbf{y}(-\ell)|\textbf{y}(\ell)) =  B_{01}^{N}(\textbf{y}) B_{10}^{N}(\textbf{y}(\ell)) $$
Consequently,
$$ B_{01}(\textbf{y}(-\ell)|\textbf{y}(\ell)) =  \sqrt{\frac{|\textbf{A}_1^{T} \textbf{A}_1|}{|\textbf{A}_0^{T} \textbf{A}_0|}\frac{|\textbf{A}_0^{T}(\ell) \textbf{A}_0(\ell)|}{|\textbf{A}_1^{T}(\ell) \textbf{A}_1(\ell)|}\frac{[\textbf{R}_1(\textbf{y})]^{n+q_1 - p_1}}{[\textbf{R}_0(\textbf{y})]^{n+q_0 + p_0}}
	\frac{[\textbf{R}_0(\textbf{y}(\ell))]^{n_{01}+q_0 - p_0}}{[\textbf{R}_1(\textbf{y}(\ell))]^{n_{01} +q_1 - p_1}}}$$
By leveraging these results, we derive the Empirical Intrinsic Bayes Factor (IBF) lower bound for General Linear Models:
$$ \underline{B}_{01}^{GLI^{*}} =  \sqrt{\frac{|\textbf{A}_1^{T} \textbf{A}_1|}{|\textbf{A}_0^{T} \textbf{A}_0|}\frac{[\textbf{R}_1(\textbf{y})]^{n+q_1 - p_1}}{[\textbf{R}_0(\textbf{y})]^{n+q - p_0}}}
\displaystyle \min_{\ell = 1,...,L}\sqrt{\frac{|\textbf{A}_0^{T}(\ell) \textbf{A}_0(\ell)|}{|\textbf{A}_1^{T}(\ell) \textbf{A}_1(\ell)|} \frac{[\textbf{R}_0(\textbf{y}(\ell))]^{n_{01} +q_0 -p_0}}{[\textbf{R}_1(\textbf{y}(\ell))]^{n_{01} +q_1 - p_1}}}$$
Additionally, the Theoretical Intrinsic Bayes Factor (IBF) Lower bound for General Linear Models can be expressed as:
\begin{equation}\label{BFGSS}
	\underline{B}_{01}^{GLI} =  \sqrt{\frac{|\textbf{A}_1^{T} \textbf{A}_1|}{|\textbf{A}_0^{T} \textbf{A}_0|}\frac{[\textbf{R}_1(\textbf{y})]^{n+q_1 - p_1}}{[\textbf{R}_0(\textbf{y})]^{n+q_0 - p_0}}\inf_{y(\ell) \in D}\frac{|\textbf{A}_0^{T}(\ell) \textbf{A}_0(\ell)|}{|\textbf{A}_1^{T}(\ell) \textbf{A}_1(\ell)|}
		\frac{[\textbf{R}_0(\textbf{y}(\ell))]^{n_{01} +q_0 - p_0}}{[\textbf{R}_1(\textbf{y}(\ell))]^{n_{01} +q_1 - p_1}}}
\end{equation} 
\newpage

\section{The One-way Layout (ANOVA)}
The One-way Layout in analysis of variance (ANOVA) involves $m$ groups of observations, each with $n_i$ observations ($j = 1,...,n_i$) independently drawn from a normal distribution $y_{ij} \sim N(\mu_i,\sigma^2)$, given $\mu_1,...,\mu_m,\sigma^2$.

The models considered are:
$$ M_0: \mu_1 = ... = \mu_m \text{ versus } M_1: \mu_i \neq \mu_j \text{ for some } i \neq j. $$
In this scenario, $p_1 = m$, $p_0 = 1$, and the general form of the matrices $\textbf{A}_0$ and $\textbf{A}_1$ can be derived. Let $n = \sum_{i=1}^{m} n_i$. Notably, the design matrices $A_0$ and $A_1$ for the one-way ANOVA can generally be written as follows:
\begin{center}
	$A_0 = 
	\begin{bmatrix}
		1 \\
		\cdot \\
		\cdot \\
		\cdot \\
		1 
	\end{bmatrix}$,
	$A_0^T =
	\begin{bmatrix}
		1 & \cdot \cdot \cdot & 1\\
	\end{bmatrix},$
	$A_1 =
	\begin{bmatrix}
		1 & 0 & \cdot \cdot \cdot & 0 & 0\\
		\vdots & \vdots & \cdot \cdot \cdot & \vdots & \vdots\\
		1 & 0 & \cdot \cdot \cdot & 0 & 0\\
		0 & 1 & \cdot \cdot \cdot & 0 & 0\\
		\vdots & \vdots & \cdot \cdot \cdot & \vdots & \vdots \\
		0 & 1 & \cdot \cdot & 0 & 0\\
		\vdots & \vdots & \cdot \cdot \cdot& \vdots & \vdots \\
		0 & 0 & \cdot \cdot \cdot & 0 & 1\\
		\vdots & \vdots & \cdot \cdot \cdot & \vdots & \vdots \\
		0 & 0 & \cdot \cdot \cdot & 0 & 1
	\end{bmatrix}$
\end{center}
\vspace{5pt}The matrices $A_0$ and $A_1$ are ($n \times m$) matrices, and each block of ones in $A_1$ corresponds to each of the m groups. In this case, every block i has $n_i$ rows and $\det(\textbf{A}_0^T \textbf{A}_0) = n$.  The matrix product of $\textbf{A}_1^T$ and $\textbf{A}_1$ can be expressed as:
$$\textbf{A}_1^T \textbf{A}_1 = 
\begin{bmatrix}
	n_1 & 0 & \cdot \cdot \cdot & 0\\
	\cdot & n_2 & \cdot \cdot \cdot & \cdot\\
	\cdot & \cdot & \ddots & \cdot \\
	0 & 0 & \cdot \cdot \cdot & n_m
\end{bmatrix}$$
The matrix resulting from the product of these two matrices is itself diagonal, with its determinant expressed as:
\begin{equation}\label{determinantM1}
	\det(\textbf{A}_1^T \textbf{A}_1) = \prod_{i=1}^{m}n_i
\end{equation}
Thus, we can compute the term $|\textbf{A}_1^T \textbf{A}_1|/|\textbf{A}_0^T \textbf{A}_0|$ in equation \eqref{generallmbf} as:
\begin{equation}\label{matrixdetermianants}
	|\textbf{A}_1^T \textbf{A}_1|/|\textbf{A}_0^T \textbf{A}_0| = \frac{\prod_{i=1}^{n}n_i}{n}
\end{equation}
In this case, the minimal training sample requires at least one observation in each group plus one extra observation in any of the groups to estimate $\sigma^2$. Thus, we require that $n_i = 1, i=1,...,j-1,j+1,...,n$, and $n_j = 2$, for some $j \in [1,m]$.

For the case mentioned above, we can use equation \eqref{constants_ratio}, and the result obtained in \eqref{determinantM1} to get $c_1$ as:
\begin{equation}\label{determinantM1_2}
	\det(\textbf{A}_1(\ell)^T \textbf{A}_1(\ell)) = (1\cdot\cdot\cdot1\cdot2\cdot1\cdot\cdot\cdot1) = 2
\end{equation}
Similarly, $c_0$ can be derived as:
\begin{equation}\label{determinantM0}
	\det(\textbf{A}_0(\ell)^T \textbf{A}_0(\ell)) = n = \sum_{j = 1}^{m} n_j = (1+\cdot\cdot\cdot+ 1 + 2 + 1 + \cdot\cdot\cdot + 1) = m + 1
\end{equation}
Thus, utilizing the outcomes from \eqref{determinantM1_2} and \eqref{determinantM0}, we can evaluate the expression in equation \eqref{constants_ratio} as follows:
\begin{equation}\label{constants_ratio_mts}
	\frac{c_0}{c_1} = \big(\frac{\det(\textbf{A}_0(\ell)^T \textbf{A}_0(\ell))}{\det(\textbf{A}_1(\ell)^T \textbf{A}_1(\ell))}\big)^{\frac{1}{2}} =  \big( \frac{m+1}{2} \big)^{\frac{1}{2}}
\end{equation}
Now, by substituting \eqref{constants_ratio_mts} and \eqref{matrixdetermianants} into \eqref{generallmbf}, derive the following expression for the Bayes factor:
\begin{equation}\label{bfanovamts}
	B^{SS}_{01} = \big( \frac{m+1}{2} \big)^{\frac{1}{2}}\big[\displaystyle\prod_{i=1}^{m}n_i\big/n\big]^{\frac{1}{2}}\big[1 + \frac{(m-1)}{n-m}F\big]^{-n/2}
\end{equation}
The SS Bayes Factor can alternatively be expressed compactly as:
\begin{equation}\label{bfanovamts}
	\boxed{B^{SS}_{01} = \Bigg(\frac{\big( \frac{m+1}{2} \big)\big[\prod_{i=1}^{m}n_i\big/n\big]}{\big[1 + \frac{(m-1)}{n-m}F\big]^{n}}\Bigg)^{\frac{1}{2}}}
\end{equation}
Furthermore, relating the $F$ statistics to the p-value can be achieved through:
$$ F_{\nu_1,\nu_2} = qf(1-p,\nu_1,\nu_2)$$
Here, $qf$ denotes the quantile function for the $F$ distribution, while $\nu_1$ and $\nu_2$ represent the degrees of freedom. For this case, $\nu_1 = p_1 - p_0$ and $\nu_2 = n - p_1$. Consequently, expressing the SS Bayes factor in terms of the p-value yields:
$$ \boxed{B^{SS}_{01}(p) = \Bigg(\frac{\big( \frac{m+1}{2} \big)\big[\prod_{i=1}^{m}n_i\big/n\big]}{\big[1 + \frac{(m-1)}{n-m}qf(1-p,p_1-p_0,n-p_1)\big]^{n}}\Bigg)^{\frac{1}{2}}}$$
This formula offers a straightforward method to convert p-values into Bayes factors. It's akin to the Sellke et al. (2001) \cite{sellke2001calibration} bound, but distinguishes itself by being dynamic, enhancing accuracy with increasing information. This technique is applicable to any Bayes factor that relies on F-statistics.

Our subsequent aim involves computing $B^{GSS}_{01}$ as in equation $\eqref{BFGSS}$ for ANOVA models and comparing it with their findings. Considering that $A_0(\ell)$ and $A_1(\ell)$ remain independent of observations in ANOVA models, and the square root is a monotonic function, we can rewrite the expression as:
$$ \displaystyle \sup_{y(\ell) \in D_n}\sqrt{\frac{|\textbf{A}_0^{T}(\ell) \textbf{A}_0(\ell)|}{|\textbf{A}_1^{T}(\ell) \textbf{A}_1(\ell)|}\frac{[\textbf{R}_0(\textbf{y}(\ell))]^{n_{01} +q_i - p_0}}{[\textbf{R}_1(\textbf{y}(\ell))]^{n_{01} +q - p_1}}}  = \sqrt{\frac{|\textbf{A}_0^{T}(\ell) \textbf{A}_0(\ell)|}{|\textbf{A}_1^{T}(\ell) \textbf{A}_1(\ell)|} \displaystyle \sup_{y(\ell) \in D_n}\frac{[\textbf{R}_0(\textbf{y}(\ell))]^{n_{01} +q_i - p_0}}{[\textbf{R}_1(\textbf{y}(\ell))]^{n_{01} +q_i - p_1}}}
$$
The General Theoretical and Empirical SS Bayes factors for ANOVA can be expressed respectively as:
$$ \boxed{B_{10}^{GSSA} =  \sqrt{\frac{|\textbf{A}_1^{T}(\ell) \textbf{A}_1(\ell)|}{|\textbf{A}_0^{T}(\ell) \textbf{A}_0(\ell)|}\frac{|\textbf{A}_0^{T} \textbf{A}_0|}{|\textbf{A}_1^{T} \textbf{A}_1|}\frac{[\textbf{R}_0(\textbf{y})]^{n+q_0 - p_0}}{[\textbf{R}_1(\textbf{y})]^{n+q_1 - p_1}}\sup_{y(\ell) \in D}
		\frac{[\textbf{R}_1(\textbf{y}(\ell))]^{n_{01} +q_1 - p_1}}{[\textbf{R}_0(\textbf{y}(\ell))]^{n_{01} +q_0 - p_0}}}}
$$  
$$ \boxed{B_{10}^{GESSA} =  \sqrt{\frac{|\textbf{A}_1^{T}(\ell) \textbf{A}_1(\ell)|}{|\textbf{A}_0^{T}(\ell) \textbf{A}_0(\ell)|}\frac{|\textbf{A}_0^{T} \textbf{A}_0|}{|\textbf{A}_1^{T} \textbf{A}_1|}\frac{[\textbf{R}_0(\textbf{y})]^{n+q_0 - p_0}}{[\textbf{R}_1(\textbf{y})]^{n+q_1 - p_1}}\displaystyle{\max_{\ell = 1,...,L}}
		\frac{[\textbf{R}_1(\textbf{y}(\ell))]^{n_{01} +q_1 - p_1}}{[\textbf{R}_0(\textbf{y}(\ell))]^{n_{01} +q_0 - p_0}}}}
$$  

\subsection{The Full Jeffrey's (no independence)  $q_k = p_k,$, for $k = i,j$ for ANOVA}
In accordance with our earlier calculations pertaining to the General Empirical SS Bayes factor for ANOVA, we apply the methodology proposed by Spiegelhalter and Smith (1982) \cite{spiegelhalter1982bayesfactors}. This involves the selection of the full Jeffrey's prior where $q_i = p_i$. Through the utilization of an imaginary training sample, which renders maximum support for $M_0$, we ascertain that $F{_y(\ell)} = 0$. Consequently, we obtain the General Theoretical SS Bayes Factor for ANOVA under the Full Jeffrey's prior
$$B_{01}^{GESSA_{FJP}} = \sqrt{\frac{m+1}{2}\big[\displaystyle\prod_{i=1}^{m}n_i\big/n\big]\Big(\frac{\textbf{R}_1(\textbf{y})}{\textbf{R}_0(\textbf{y})}\Big)^{n}\min_{\ell = 1,...,L}
	\Big(\frac{\textbf{R}_0(\textbf{y}(\ell))}{\textbf{R}_1(\textbf{y}(\ell))}\Big)^{m+1}}.
$$
Now, expressing the ratio of the residual sum of squares as an F-statistics results in:
$$B_{01}^{GESSA_{FJP}}=\sqrt{\frac{m+1}{2}\big[\displaystyle\prod_{i=1}^{m}n_i\big/n\big]\Big(1 + \frac{(m - 1)}{(n - m)}F_{y}\Big)^{-n}\max_{\ell = 1,...,L}
	\Big(1 + \frac{(p_1 - p_0)}{(n_{01} - p_1)}F_{y(\ell)}\Big)^{m+1}}$$

The General Theoretical SS Bayes factor for ANOVA results in:
$$B_{10}^{GSSA_{FJP}}=\sqrt{\frac{2}{m+1}\big[\displaystyle n\big/\prod_{i=1}^{m}n_i\big]\Big(1 + \frac{(m - 1)}{(n - m)}F_{\textbf{y}}\Big)^{n}\sup_{y(\ell) \in D}
	\Big[\frac{\textbf{R}_1(\textbf{y}(\ell))}{\textbf{R}_0(\textbf{y}(\ell))}\Big]^{m+1}}$$

Observing that $\textbf{R}_0(y(\ell)) \geq \textbf{R}_1(y(\ell))$, it follows that:
$$ \sup_{y(\ell) \in D}
\frac{[\textbf{R}_1(\textbf{y}(\ell))]}{[\textbf{R}_0(\textbf{y}(\ell))]} = 1 \Longrightarrow\sup_{y(\ell) \in D}
\Big[\frac{\textbf{R}_1(\textbf{y}(\ell))}{\textbf{R}_0(\textbf{y}(\ell))}\Big]^{m+1} = 1, \hspace{5pt}, \forall m > 0 $$ 
Consequently, the General Theoretical SS Bayes Factor, employing the full Jeffrey's prior for ANOVA models, simplifies to:
$$\boxed { B_{10}^{GSSA_{FJP}}=\sqrt{\frac{2}{m+1}\big[\displaystyle n\big/\prod_{i=1}^{m}n_i\big]\Big(1 + \frac{(m - 1)}{(n - m)}F_{\textbf{y}}\Big)^{n}} = B_{10}^{SS}.}$$
Alternatively, this Bayes factor can be expressed as:
$$ \boxed{B_{10}^{GSSA_{FJP}} =  \sqrt{\frac{2}{m+1}\big[\displaystyle n\big/\prod_{i=1}^{m}n_i\big] \Big[\frac{\textbf{R}_0(\textbf{y})}{\textbf{R}_1(\textbf{y})}\Big]^n}.}
$$ 
\subsection{The Reference prior ($q_k = 0, k = i,j$)  for ANOVA}
In this section, we delve into the theoretical SS Bayes factor for the ANOVA scenario, employing the reference prior obtained by setting $q_k = 0$ for $k=i,j$. The theoretical SS Bayes Factor for ANOVA, specifically under the \textit{reference prior} context, is formulated as:
$$\boxed{B_{10}^{GSSA_{RP}} =  \sqrt{\frac{|\textbf{A}_1^{T}(\ell) \textbf{A}_1(\ell)|}{|\textbf{A}_0^{T}(\ell) \textbf{A}_0(\ell)|}\frac{|\textbf{A}_0^{T} \textbf{A}_0|}{|\textbf{A}_1^{T} \textbf{A}_1|}\frac{[\textbf{R}_0(\textbf{y})]^{n+ 1}}{[\textbf{R}_1(\textbf{y})]^{n- (m+1)}}\sup_{y(\ell) \in D}
		\Big(\frac{1}{\textbf{R}_0(\textbf{y}(\ell))}\Big)^{m}}}
$$

Upon inspection of the aforementioned equation, it becomes clear that under this particular prior, the theoretical SS Bayes factor tends to either zero or infinity. Consequently, the computation of the theoretical SS BF with this prior is rendered unfeasible. This implies that for reference priors, the SS bounds do not exist. This limitation is not previously highlighted in the works of Smith and Spiegelhalter (1980)\cite{smith1980bayesfactors} and (1982) \cite{spiegelhalter1982bayesfactors}, thereby underscoring an important consideration for future research in this domain.
\subsection{The Modified Jeffrey's Prior ($q_j = p_j - p_i, q_i = 0$) for ANOVA}
\hspace{10pt}The General Empirical SS Bayes Factor for ANOVA, under the Modified Jeffrey's prior results in: 
$$ \boxed{B_{10}^{GESSA_{MJP}} = \sqrt{\frac{2}{m+1}\big[n\big/\displaystyle\prod_{i=1}^{m}n_i\big]\Big(\frac{\textbf{R}_0(\textbf{y})}{\textbf{R}_1(\textbf{y})}\Big)^{n-1}\max_{\ell = 1,...,L}
		\Big(\frac{\textbf{R}_1(\textbf{y}(\ell))}{\textbf{R}_0(\textbf{y}(\ell))}\Big)^{m}}.}
$$ 
Similarly, the general Theoretical SS Bayes Factor for ANOVA, under the Modified Jeffrey's prior results in: 
$$ \boxed{B_{10}^{GSSA_{MJP}} = \sqrt{\frac{2}{m+1}\big[n\big/\displaystyle\prod_{i=1}^{m}n_i\big]\Big(\frac{\textbf{R}_0(\textbf{y})}{\textbf{R}_1(\textbf{y})}\Big)^{n-1}\sup_{y(\ell) \in D}
		\Big(\frac{\textbf{R}_1(\textbf{y}(\ell))}{\textbf{R}_0(\textbf{y}(\ell))}\Big)^{m}}.}
$$ 
Alternatively, this Bayes Factor can be expressed in terms of the $F$ statistics as:
$$ \boxed{B_{10}^{GSSA_{MJP}} = \sqrt{\frac{2}{m+1}\big[n\big/\displaystyle\prod_{i=1}^{m}n_i\big]\Big(1 + \frac{(m - 1)}{(n - m)}F_{y}\Big)^{n-1}}.} $$ 

\section{Conclusions}
\hspace{10pt} In conclusion, the ANOVA case analysis has provided valuable insights regarding the use of different priors in the context of Full Jeffreys, Modified Jeffreys, and Reference Prior. The SS bounds obtained for Full Jeffreys and Modified Jeffreys are informative, useful, and exhibit close proximity to each other. Importantly, it has been demonstrated that the Bayes Factor can be expressed as a function of the F-Statistics under both priors. However, in the case of the Reference Prior, the SS bound fails to provide informative results, highlighting its lack of usefulness. Notably, it is intriguing to observe the sensitivity of the SS bound to the initial objective prior, which further supports the conclusions drawn by Berger and Pericchi (1996) \cite{berger1996intrinsic} and others that the Modified Jeffreys prior is a superior choice, particularly within the Linear Gaussian Model and potentially in a broader context. Additionally, it may be of interest to explore the extent to which the Modified Jeffreys prior possesses additional properties, such as matching, besides its inherent simplicity and the one-on-one relationship it establishes with the F-Statistics.
\newpage
\chapter{Separated Hypothesis Testing}
\hspace{15pt} Traditional hypothesis testing typically involves comparing a single hypothesis to a pre-determined null hypothesis, assuming a single underlying distribution. However, in certain scenarios, there arises a need to examine and compare two distinct distributions simultaneously. This is the essence of separated hypothesis testing, a concept that expands upon conventional approaches by accommodating situations where the hypothesis being tested corresponds to two separate populations or data sources. By addressing this unique requirement, separated hypothesis testing enables researchers to gain valuable insights into the potential disparities or similarities between two distinct distributions, thereby enhancing the accuracy and comprehensiveness of statistical inference. In this chapter, we delve into the theoretical underpinnings, methodology, and applications of separated hypothesis. 

We explored the concept of separated hypothesis testing through practical examples involving the comparison of different probability distributions. Specifically, we consider the scenarios of Poisson vs. Geometric and Poisson vs. Negative Binomial distributions. We computed IBF bounds for both comparisons, ensuring the reliability and consistency of our results. To validate our findings, we conducted simulations where we progressively augmented the sample size, observing a numerical convergence toward accurate decisions in our experiments. Each example is accompanied by a comprehensive discussion of the methodologies and code used in our analyses. The figures presented serve as compelling visual evidence substantiating the validity of our conclusions.
\section{The Poisson vs. Geometric Separated Hypothesis Test}
Let's consider comparing two distinct models: the Poisson and the Geometric distributions, where $\textbf{y} = (y_1,...,y_n)$ represents independently and identically distributed (i.i.d.) observations. This frames our model selection problem as follows:$$M_0: Y \sim Poisson(\lambda) \mbox{ vs } M_1: Y \sim Geometric(\theta)$$ 
The densities for each model are:
$$f_0(y|\lambda) = \frac{e^{-\lambda}\lambda^y}{y!}, \lambda > 0, \hspace{3pt} \mbox{ and } \hspace{3pt}  f_1(y|\theta) = \theta(1-\theta)^{y}, \theta \in (0,1), y = 0,1,2,...$$
The Jeffreys's priors and likelihoods associated with $M_0$ and $M_1$ yield the following outcomes:
$$ \pi_0^{J}(\lambda) = \frac{c_0}{\lambda^{1/2}}, \hspace{10pt} \pi_1^{J}(\theta) = \frac{c_1}{\theta(1-\theta)^{1/2}}$$
$$f_0(\textbf{y}|\lambda) = \frac{e^{-n\lambda}\lambda^{\sum_{i=1}^{n} y_i}}{\displaystyle\prod_{i=1}^{n}y_i!}, \hspace{3pt} \mbox{ and } \hspace{3pt}  f_1(\textbf{y}|\theta) = \theta^n(1-\theta)^{\sum_{i=1}^{n}y_i}$$
Given that both models entail a single parameter, the minimal training sample size required is one. Utilizing the priors and likelihoods, we proceed to compute the marginals and subsequently derive the Bayes factors as follows:
$$ m_0(\textbf{y}) = \frac{c_0}{\displaystyle\prod_{i=1}^{n}y_i!}\displaystyle\int_{0}^{\infty}e^{-n\lambda}\lambda^{\sum_{i=1}^{n} y_i}\frac{1}{\lambda^{1/2}}d\lambda$$
$$ m_1(\textbf{y}) =c_1\displaystyle\int_{0}^{1} \theta^n(1-\theta)^{\sum_{i=1}^{n}y_i}\frac{1}{\theta(1-\theta)^{1/2}}d\theta$$
After straightforward algebraic manipulation, we find that the integral in $m_0(\textbf{y})$ follows a Gamma distribution, while the integral in $m_1(\textbf{x})$ corresponds to a Beta distribution, wherein:
$$ m_0(\textbf{y}) = \frac{c_0\Gamma(\sum y_i + \frac{1}{2})}{\displaystyle\prod_{i=1}^{n}y_i!}(1/n)^{(\sum y_i + \frac{1}{2})}$$
$$ m_1(\textbf{y}) =c_1\displaystyle\int_{0}^{1} \theta^{n-1}(1-\theta)^{\sum_{i=1}^{n}y_i - 1/2}d\theta =c_1\frac{\Gamma(n)\Gamma(\sum y_i + 1/2) }{\Gamma(n + \sum y_i + 1/2)} $$

Thus, the Bayes Factor results in: 
$$ B_{01}^{N}(\textbf{y}) = \frac{c_0}{c_1} \frac{\Gamma(n + \sum y_i + 1/2)}{\displaystyle\prod_{i=1}^{n}y_i!\Gamma(n)n^{(\sum y_i + \frac{1}{2})}}.$$
The Empirical IBF upper bound can be obtained by computing: 
$$\overline{B}^{I^*}_{10}(\textbf{y}) = B_{10}^{N}(\textbf{y}) \displaystyle\sup_{y(\ell) \in D_{n,k}}\frac{m_0(y(\ell))}{m_1(y(\ell))}$$
Evidently, the following marginals are required:
$$m_0(y(\ell)) = c_0\frac{\Gamma(y(\ell) + 1/2)}{y(\ell)!}, \hspace{15pt} m_1(y(\ell)) = c_1\frac{\Gamma(1)\Gamma(y(\ell) + 1/2)}{\Gamma(1 + y(\ell) + 1/2)}$$
By using those marginals, we can calculate the Bayes factor for $y(\ell)$ as:
$$ B_{01}^{N}(y(\ell)) = \frac{c_0}{c_1}\frac{1}{\Gamma(y(\ell) + 1 + 1/2)}$$ 
The above Bayes factor, as a function of $y(\ell)$ is decreasing. Therefore, calculating the theoretical supremum, which occurs at $y(\ell) = 0$, results in:
$$\sup_{y(\ell) \in D}B_{01}^{N}(y(\ell)) = \frac{c_0}{c_1}\frac{1}{\Gamma(1 + 1/2)}$$
Utilizing the identity $\Gamma(n + 1/2) = (2n!\sqrt{\pi})/(4^n n!)$, we derived:
$$\sup_{y(\ell) \in D}B_{01}^{N}(y(\ell)) = \frac{c_0}{c_1}\frac{2}{\sqrt{\pi}}$$
This results in the Theoretical IBF upper bound:
$$\boxed{\overline{B}_{10}^{I}(\textbf{y}) = \frac{\displaystyle\prod_{i=1}^{n}y_i!(n-1)!n^{(n\bar{y} + \frac{1}{2})}}{\Gamma(n(1 + \bar{y}) + 1/2)}\frac{2}{\sqrt{\pi}}}$$
And the Theoretical IBF lower bound:
$$\boxed{\underline{B}_{01}^{I}(\textbf{y}) = \frac{\Gamma(n(1 + \bar{y}) + 1/2)}{\displaystyle\prod_{i=1}^{n}y_i!(n-1)!n^{(n\bar{y} + \frac{1}{2})}}\frac{\sqrt{\pi}}{2}}$$
Similarly, the empirical IBF upper bound is:
$$\boxed{\overline{B}_{10}^{I^{*}}(\textbf{y}) = \frac{\displaystyle\prod_{i=1}^{n}y_i!(n-1)!n^{(n\bar{y} + \frac{1}{2})}}{\Gamma(n(1 + \bar{y}) + 1/2)}\frac{1}{\Gamma(y_{(1)} + 1 + \frac{1}{2}) }, \hspace{15pt} y_{(1)} = \displaystyle\min_{i=1,...,n}\{y_i\}}$$
And the empirical IBF lower bound is:
$$\boxed{\underline{B}_{01}^{I^{*}}(\textbf{y}) = \frac{\Gamma(n(1 + \bar{y}) + 1/2)\Gamma(y_{(n)} + 1 + \frac{1}{2})}{\displaystyle\prod_{i=1}^{n}y_i!(n-1)!n^{(n\bar{y} + \frac{1}{2})}}, \hspace{15pt} y_{(n)} = \displaystyle\max_{i=1,...,n}\{y_i\}}$$
\hspace{15pt} We aim to ascertain the reliability and consistency of our obtained results by implementing an \textbf{R} script specifically designed to calculate the Bayes factor in two distinct scenarios: when the null hypothesis ($H_0$) holds true and when the alternative hypothesis ($H_1$) is true. By conducting this analysis, we will demonstrate that when the samples are drawn from the Poisson distribution, the computed Bayes factor approaches zero, indicating stronger support for $H_0$. Conversely, when the samples originate from the Geometric distribution, the Bayes factor exhibits values significantly greater than one, favoring $H_1$. Furthermore, we will calculate the empirical IBF bounds, providing numerical bounds for both $B_{01}^{N}(\textbf{y})$ and $B_{10}^{N}(\textbf{y})$. The $\textbf{R}$ script below generates one hundred simulations of thirty independent samples from the Poisson distribution and from the Geometric distribution. By employing this approach, we calculated the strength of evidence in favor of the null hypothesis in both scenarios. As anticipated, the obtained values are consistent with our prior knowledge about the data that we simulated, indicating the expected outcome of growing indefinitely when the null is false and going to zero under the null. We smoothed the results by calculating the average value for each sample size over all of the simulations.
\\ \\
\textbf{R Program}
\begin{lstlisting}[language=R]
	
	#100 simulations of 30 samples each
	
	#Null hypothesis true, lambda =1
	lambda = 1
	n_sim = 100; n_data = 30; p=0.5
	bf10ss_pois = c(); data_pois = c(); bfss10_pois = c();
	bf10ss_geom = c(); data_geom = c(); bfss10_geom = c();
	for (i in 1:n_sim) { 
		for (j in 1:n_data) {
			
			data_geom = c(data_geom,rgeom(n=1,prob=p));
			data_pois = c(data_pois,rpois(n=j,lambda=lambda));
			s_geom = sum(data_geom)
			s_pois = sum(data_pois)
			y_fact_geom = factorial(data_geom)
			y_fact_pois = factorial(data_pois)
			
			#Marginals
			m1_geom = (j^(s_geom + 0.5)*factorial(j-1)*prod(y_fact_geom)*2)
			m0_geom = (gamma(j + s_geom + 0.5)*sqrt(pi))
			bfss10_geom = c(bfss10_geom,m1_geom/m0_geom)
			m1_pois = (j^(s_pois + 0.5)*factorial(j-1)*prod(y_fact_pois)*2)
			m0_pois = (gamma(j + s_pois + 0.5)*sqrt(pi))
			bfss10_pois = c(bfss10_pois,m1_pois/m0_pois)
		}
		bf10ss_pois = cbind(bf10ss_pois,bfss10_pois)
		bf10ss_geom = cbind(bf10ss_geom,bfss10_geom)
	}
	bf10ss_avg_geom = rep(0,n_data)
	bf10ss_avg_pois = rep(0,n_data)
	for (i in 1:n_data) {
		bf10ss_avg_geom[i] = mean(bf10ss_geom[i,])
		bf10ss_avg_pois[i] = mean(bf10ss_pois[i,])
	}
	par(mfrow=c(1,2))
	plot(1:n_data,bf10ss_avg_pois,type="l",col="red",main="Geometric vs Poisson Distribution Bayes Factor Bound",ylab="Average SS B10 Bound",xlab="Sample size")
	mtext(text="Data generated from the Poisson distribution",side=3)
	plot(1:n_data,bf10ss_avg_geom,type="l",col="red",main="Geometric vs Poisson Distribution Bayes Factor Bound",ylab="Average SS B10 Bound",xlab="Sample size")
	mtext(text="Data generated from the Geometric distribution",side=3)
\end{lstlisting}
\newpage 
%The $\textbf{R}$ code below generates ten independent samples from the Geometric distribution. Utilizing the SS Bayes Factor ($B_{10}^{SS}$) approach, it calculates the strength of evidence in favor of the null hypothesis. As anticipated, the obtained value is greater than one, in accordance with expectations. Additionally, we computed the Bayes factors in both scenarios as a function of the sample size $n$.
%\\ \\
%\textbf{R program}
%\begin{lstlisting}
%[language=R]
%#Geometric simulation with n = 10 p = 0.5
%n = 10; p = 0.5
%y = rgeom(n=n,prob=p)
%s = sum(y)
%y_fact = factorial(y)

%#Marginals
%m1 = (n^(s + 0.5)*factorial(n-1)*prod(y_fact)*2)
%m0 = (gamma(n + s + 0.5)*sqrt(pi))
%bfss10 = m1/m0

%#Output
%> bfss10
%[1] 29.93864

%Geometric_Poisson_SSBF = function(n,prob=0.5,lambda=1) {
	%  geometric_data = c(); poisson_data = c(); bfss10_geometric = c(); bfss10_poisson = c()
	%  for (i in 1:n) {
		%    geometric_data = c(geometric_data,rgeom(n=1,prob=prob))
		%    s = sum(geometric_data)
		%    y_fact = factorial(geometric_data)
		%    m1 = (i^(s + 0.5)*factorial(i-1)*prod(y_fact)*2)
		%    m0 = (gamma(i + s + 0.5)*sqrt(pi))
		%    bfss10_geometric = c(bfss10_geometric,m1/m0)
		
		%    poisson_data = c(poisson_data,rpois(n=1,lambda=lambda))
		%    s = sum(poisson_data)
		%   y_fact = factorial(poisson_data)
		%   m1 = (i^(s + 0.5)*factorial(i-1)*prod(y_fact)*2)
		%    m0 = (gamma(i + s + 0.5)*sqrt(pi))
		%    bfss10_poisson = c(bfss10_poisson,m1/m0)
		%  }
	%  par(mfrow=c(1,2))
	% plot(1:n,bfss10_poisson,main="BFSS10 with Poisson samples"
	%  ,xlab="Number of samples",ylab="BFSS10",type="l")
	%  plot(1:n,bfss10_geometric,main="BFSS10 with Geometric samples"
	%  ,xlab="Number of samples",ylab="BFSS10",type="l")
	%}
%Geometric_Poisson_SSBF(n=30,prob=0.5,lambda=1)
%\end{lstlisting}
\begin{figure}
	\centering
	\subfigure{\includegraphics[width=0.90\textwidth]{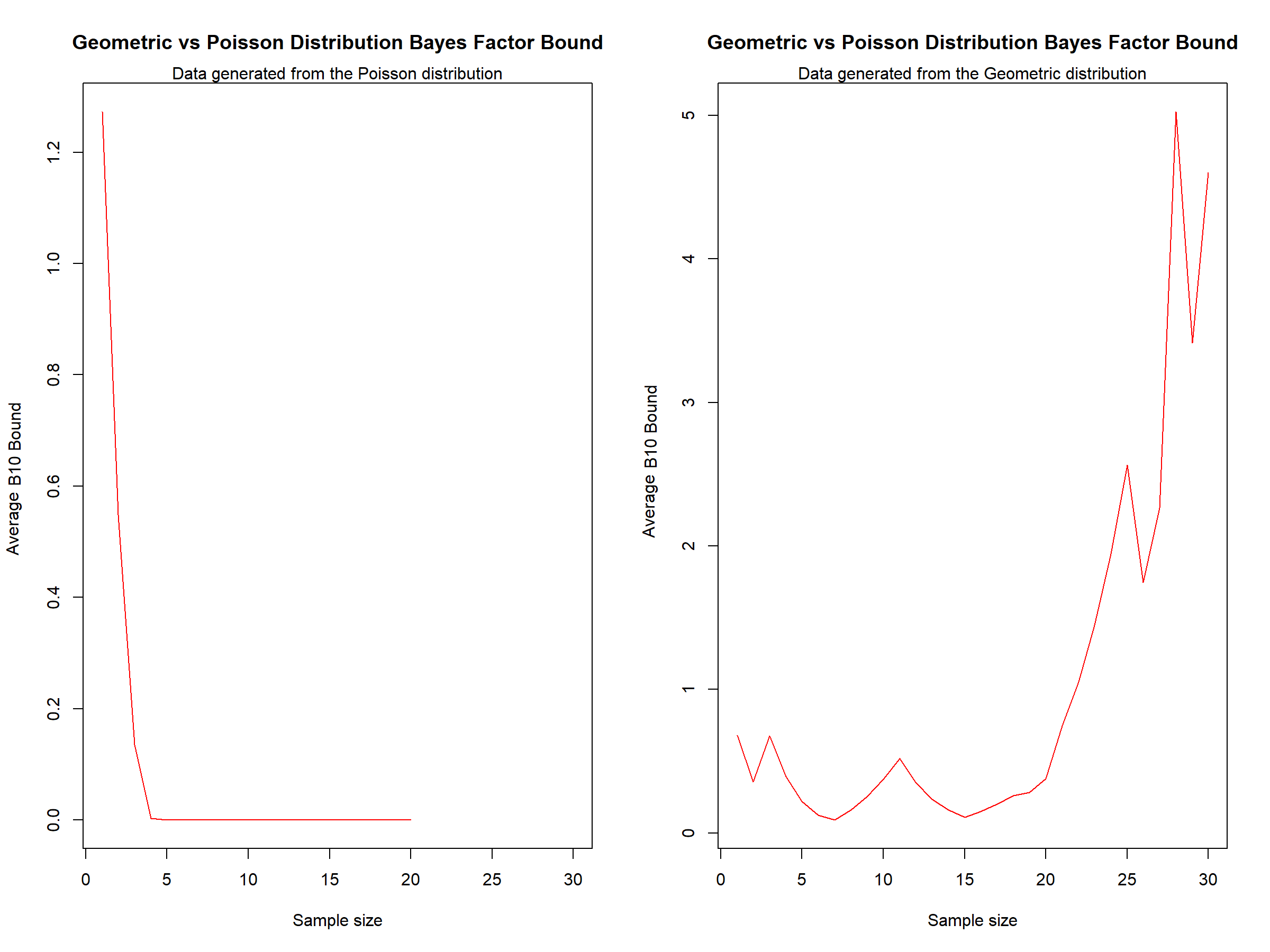}} 
	\caption{Poisson vs. Geometric IBF 10 bound as the sample size increases when $H_0$ is true (left) and when $H_0$ is false (right).}
	\label{fig:bfsspoissongeometric}
\end{figure}
\newpage
Once we obtain the vector of samples $\textbf{y}$, we can proceed to compute the empirical bound for $B_{10}^{AI}(\textbf{y})$. The code below calculates the value of the Bayes Factor for each sample in $\textbf{y}$ and then identifies the maximum value among them, representing the empirical maximum. By employing this code, we obtain a numerical measure that reflects the strength of evidence in favor of the alternative hypothesis.
\\ 
\textbf{R Program}
\begin{lstlisting}
	bfss01_mts = function(y,print=TRUE) { 
		n = length(y); bfss_01=c()
		for (i in 1:n) { m1 = gamma(y[i] + 1.5); m0 = 1; bfss_01 = c(bfss_01,m0/m1) }
		y_max = y[which.max(bfss_01)]; bfss_max = max(bfss_01)
		if (print) {
			print("Samples"); print(y); print("BFSS01 for each MTS"); print(bfss_01)
			print("The value that maximize the BFSS 01 is"); print(y_max)
			print("The maximal value of the BFSS 01 is"); print(bfss_max)
		}
		else { bfss = c(); bfss$y_max= y_max; bfss$bfss_max = bfss_max; return(bfss) }
	}
\end{lstlisting}
\newpage
Upon implementing the aforementioned algorithm, leveraging a vector of samples drawn from the Poisson distribution, we acquired the empirical supremum of the Bayes Factor. Notably, this empirical outcome corresponded precisely with the theoretical supremum. This concordance is attributable to the inclusion of the value '0' within the sample set, as is evident in the code output below
\\ 
\textbf{R Program}
\begin{lstlisting}
	> bfss01_mts(y)
	[1] "Samples"
	[1] 3 1 2 0 5 0 2 1 0 2
	[1] "BFSS01 for each MTS"
	[1] 0.085971746 0.752252778 0.300901111 1.128379167 0.003473606 
	[1] 1.128379167 0.300901111 0.752252778 1.128379167 0.300901111
	[1] "The value that maximizes the BFSS 01 is"
	[1] 0
	[1] "The maximal value of BFSS 01 is"
	[1] 1.128379
	[1] "The theoretical sup of BFSS 01 is"
	[1] 2/sqrt(pi)
	[1] 1.128379
	
\end{lstlisting}

\hspace{5pt} A valuable approach involves comparing the theoretical supremum with the empirical supremum as the sample size (n) expands. This can be accomplished utilizing specific computational methodologies, particularly when the underlying data are derived from Poisson or Geometric distributions. The implementation of this comparison can be conducted via the following code
\\ 
\textbf{R Program}
\begin{lstlisting}
	y_pois = rpois(n=20,lambda=0.5); y_geom = rgeom(n=20,prob=0.8)
	bfss10_comparison = function(y,c_theoretical = 2/sqrt(pi)) { 
		bfss10_empirical = c(); bfss10_theoretical = c(); s = 0
		for (i in 1:n) { 
			s = s + y[i]; y_fact = factorial(y[1:i]); 
			m1 = (i^(s + 0.5)*factorial(i-1)*prod(y_fact)); m0 = (gamma(i + s + 0.5))
			bfss01= bfss01_mts(y[1:i],print=FALSE); c_empirical = bfss01$bfss_max
			bfss10_empirical = c(bfss10_empirical,c_empirical*m1/m0); bfss10_theoretical= c(bfss10_theoretical,c_theoretical*m1/m0)
		}
		plot(1:n,bfss10_empirical,type="l",xlab="Number of samples",ylab="BFSS 10 Empirical")
		plot(1:n,bfss10_theoretical,type="l",xlab="Number of samples",ylab="BFSS 10 Theoretical")
	}
\end{lstlisting}
\newpage
The following charts were generated using the code above
\begin{figure}[H]
	\centering
	\subfigure(a){\includegraphics[width=1\textwidth]{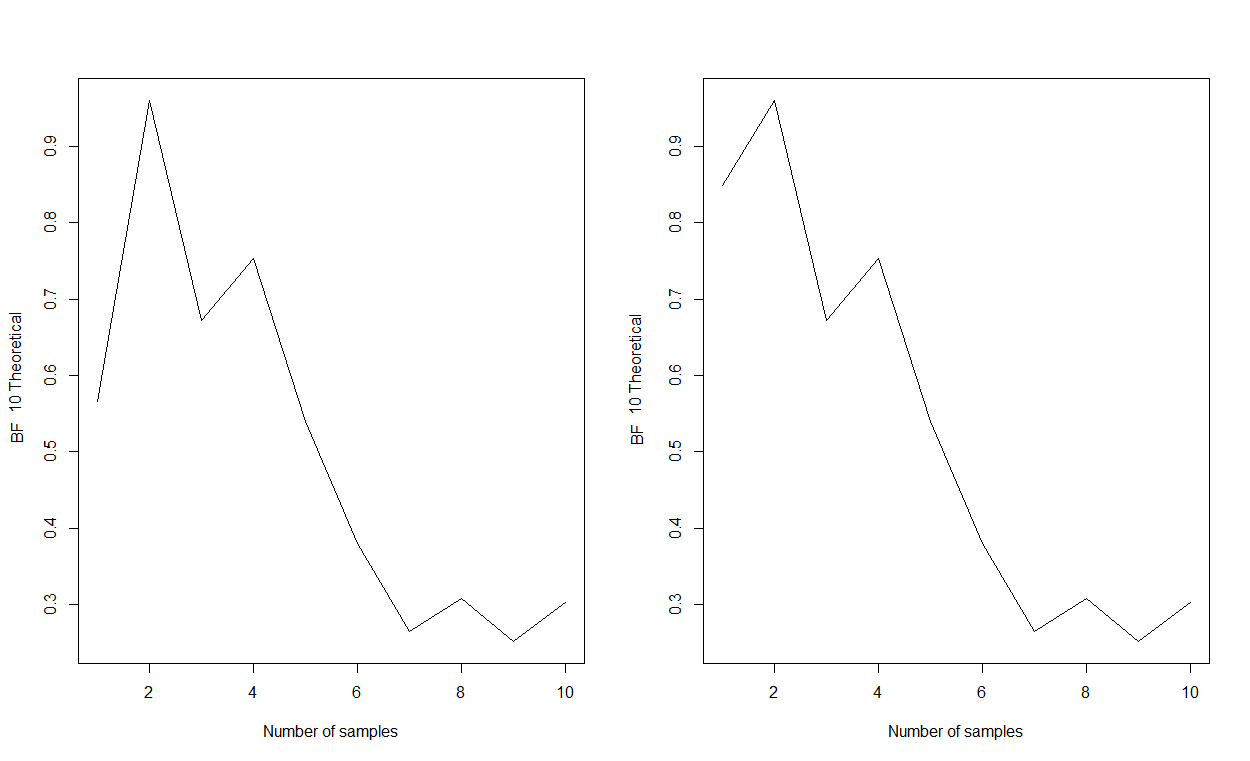}} 
	\subfigure(b){\includegraphics[width=1\textwidth]{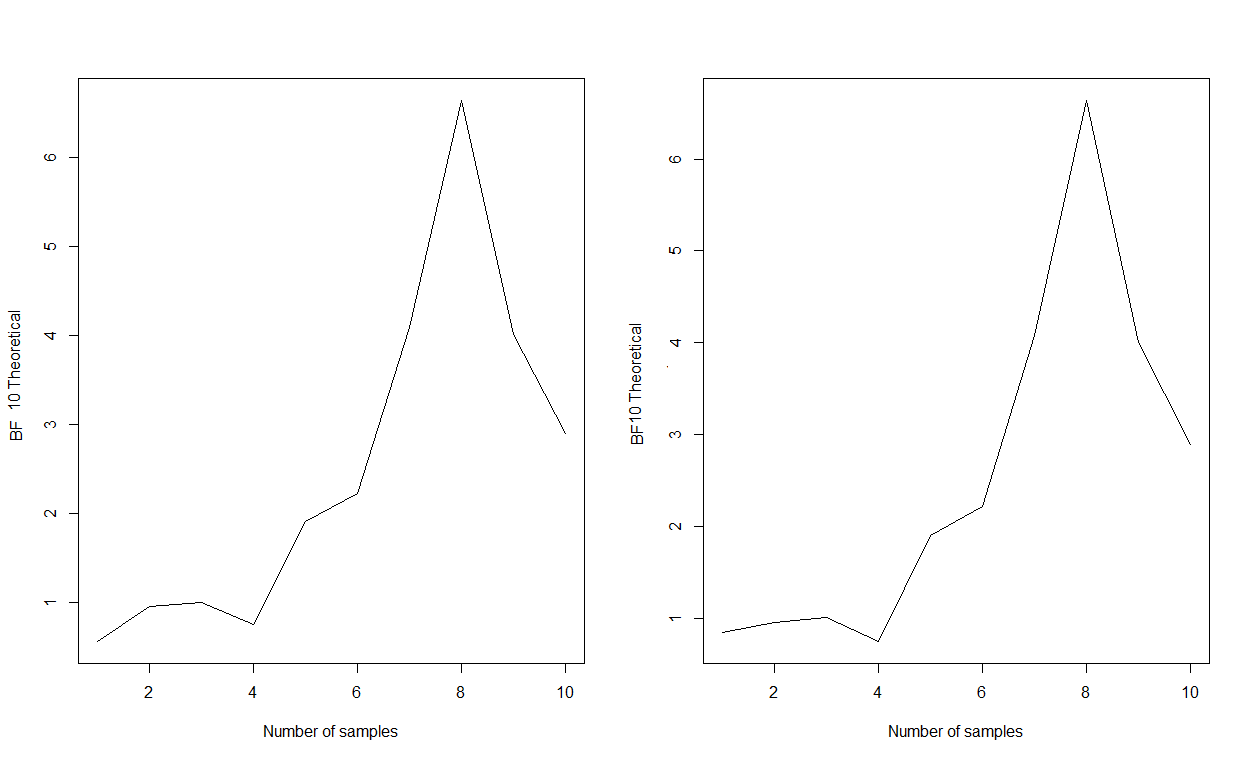}} 
	\caption{(a) Empirical and Theoretical $\overline{B}^{I}_{10}$ when $H_0$ is true (b) Empirical and Theoretical $\overline{B}^{I}_{01}$ when $H_0$ is false}
	\label{fig:separatedssbf}
\end{figure}
\newpage
Our methodological approach involves the generation of one hundred simulations, each encompassing thirty samples. These simulations will be conducted under both the null hypothesis and the alternative hypothesis. For each sample size, we aim to compute the average Bayes factors derived from all the simulations. Further, our study entails the graphical representation of these averaged Bayes factors against their respective sample sizes. Both the averages under the null and the alternative hypotheses will be depicted within the same coordinate system, facilitating a clear comparison and analysis. To automate the generation of such illustrative graphics, we utilize \textbf{R} programming language. The corresponding script to generate these charts is
\newpage
\textbf{R Program}
\begin{lstlisting}[language=R]
	Geometric_Poisson_SSBF = function(n,prob=0.5,lambda=1,simulations=100) {
		geometric_data = c(); poisson_data = c(); bfss01_geometric = c(); bfss01_poisson = c(); bf01_geo_total = c(); bf01_pois_total = c(); ibf01_geometric = c(); ibf01_poisson = c();ibf01_geo_total = c(); ibf01_pois_total = c()
		for (k in 1:simulations) {
			for (i in 1:n) {
				geometric_data = c(geometric_data,rgeom(n=1,prob=prob)); s = sum(geometric_data); y_fact = factorial(geometric_data)
				m1 = (i^(s + 0.5)*factorial(i-1)*prod(y_fact)*2); m0 = (gamma(i + s + 0.5)*sqrt(pi))
				m1_ibf = (i^(s + 0.5)*factorial(i-1)*prod(y_fact)); m0_ibf = (gamma(i + s + 0.5))      
				ibf10_correction = mean(gamma(geometric_data + 1.5))
				bfss01_geometric = c(bfss01_geometric,m0/m1); ibf01_geometric = c(ibf01_geometric,(m0_ibf/m1_ibf)*ibf10_correction)
				poisson_data = c(poisson_data,rpois(n=1,lambda=lambda))
				s = sum(poisson_data); y_fact = factorial(poisson_data)
				m1 = (i^(s + 0.5)*factorial(i-1)*prod(y_fact)*2); s = sum(log(y_fact)); m0 = (gamma(i + s + 0.5)*sqrt(pi)); m1_ibf = (i^(s + 0.5)*factorial(i-1)*prod(y_fact)); m0_ibf = (gamma(i + s + 0.5))
				ibf10_correction = mean(gamma(poisson_data + 1.5))
				bfss01_poisson = c(bfss01_poisson,m0/m1); ibf01_poisson = c(ibf01_poisson,(m0_ibf/m1_ibf)*ibf10_correction)
			}
			bf01_geo_total = cbind(bf01_geo_total,bfss01_geometric); bf01_pois_total = cbind(bf01_pois_total,bfss01_poisson); ibf01_pois_total = cbind(ibf01_pois_total,ibf01_poisson); ibf01_geo_total = cbind(ibf01_geo_total,ibf01_geometric)
		}
		bf01_geo_avg = c(); bf01_pois_avg = c(); ibf01_geo_avg = c(); ibf01_pois_avg = c(); n = nrow(bf01_geo_total)
		for (i in 1:n) {
			bf01_geo_avg = c(bf01_geo_avg,mean(bf01_geo_total[i,])); bf01_pois_avg = c(bf01_pois_avg,mean(bf01_pois_total[i,])); ibf01_pois_avg = c(ibf01_pois_avg,mean(ibf01_pois_total[i,])); ibf01_geo_avg = c(ibf01_geo_avg,mean(ibf01_geo_total[i,]));
		}
		par(mfrow=c(1,2))
		plot(1:n,bf01_pois_avg,main="BFSS01 (Black) vs IBF10 (Red) with Poisson samples ",xlab="Number of samples",ylab="BFSS01",type="l",ylim = c(min(bf01_pois_avg,ibf01_pois_avg),max(bf01_pois_avg,ibf01_pois_avg)))
		mtext("Simulations: 100 / Samples: 30 / H0: TRUE",side=3)
		lines(1:n,ibf01_pois_avg, type="l",col="red")
		plot(1:n,bf01_geo_avg,main="BFSS01 (Black) vs IBF10 (Red) with Geometric samples",xlab="Number of samples",ylab="BFSS01",type="l", ylim=c(min(bf01_geo_avg,ibf01_geo_avg),max(bf01_geo_avg,ibf01_geo_avg)))
		mtext("Simulations: 100 / Samples: 30 / H0: FALSE",side=3)
		lines(1:n,ibf01_geo_avg, type="l",col="red")
	}
	Geometric_Poisson_SSBF(n=30,prob=0.5,lambda=1,simulations=100)
\end{lstlisting}
\begin{figure}[H]
	\centering
	\includegraphics[width=1\textwidth]{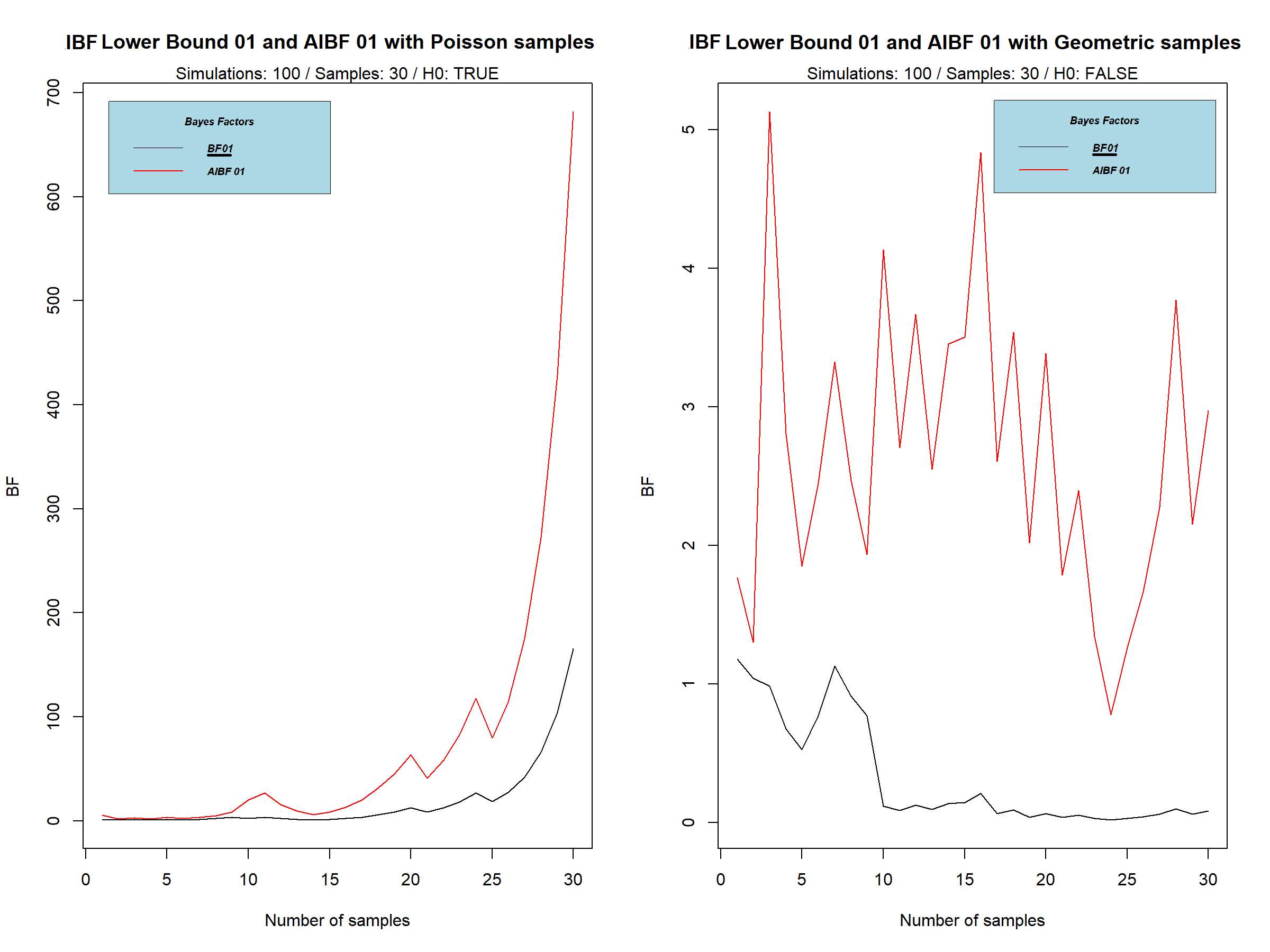}
	\caption{$AIBF_{10}$ and The $IBF_{10}$ Lower Bound Comparison for the Poisson vs Geometric Test}
	\label{fig:aibf_vs_sss_geometric_poisson}
\end{figure}
Examining the presented figure above elucidates several significant observations. Most prominently, it is discernible that $\underline{B}_{01}^{I}$ is a lower bound of the Average Intrinsic Bayes Factor (AIBF). Further, we note that the IBF lower bound is a more stable and robust form of evidence compared to the Arithmetic Intrinsic Bayes Factor (AIBF). In the scenario where data is generated under the alternative hypothesis, the AIBF presents lower values. However, these values do not furnish sufficient evidence to confidently reject the null hypothesis, $H_0$. On the other hand, the lower bound presents a more compelling case. It succeeds in providing an accurate decision - one favoring the rejection of the null hypothesis - when we deal with a sample size exceeding ten. This finding underscores the valuable role that the IBF lower bound can play in hypothesis testing, particularly when dealing with larger sample sizes.
\newpage
%\textbf{Simulations for Poisson vs Geometric}

%In this section, we verified the consistency of our results by implementing an $R$ code that calculates the theoretical $B_{10}^{SS}$ when the data is sampled from $H_0$. It will be shown that when the samples are taken from the Poisson distribution the value of $B_{10}^{SS}$ converges to zero as we increase the sample size $n$. We will also calculate and plot the $-ep\log(p)$ Bayes factor lower bound to compare their values.
%The maximum likelihood estimators for the Poisson and Geometric distribution are $\hat{\lambda} = \bar{y}$ and $\hat{\theta} = \frac{1}{\bar{y}}$ respectively. The loglikelihood for the Poisson is
%$${-n\hat{\lambda}} + \sum_{i=1}^{n}{y}\log(\hat{\lambda}) - \log(\prod_{i=1}^{n}y_i)$$
%and for the Geometric
%$$  n\log(\hat{\theta}) + \sum_{i=1}^{n}y_i\log(1 -\hat{\theta})$$z
%The following function generates the comparison chart for $B_{10}^{SS}$ vs $-ep\log(p)$ bound
%\begin{lstlisting}[language=R]
%#h0 is poisson; #h1 is geometric
%bfss_vs_eplogp = function(simulation="poisson",geom_param,poisson_param,k) {
	%    sim = c(); bfbound = c(); bfssbound_empirical = c();
	%    pval_seq= c(); bfssbound_theoretical = c()
	%    for (n in 1:k) {
		%      if (simulation == "poisson") { 
			%        sim = c(sim,rpois(n = 1,lambda=poisson_param))
			%      }
		%      else if (simulation == "geometric") {
			%        sim = c(sim,rgeom(n=1,prob=geom_param) + 1)
			%      }
		%      lambda_hat = mean(sim); theta_hat = 1/mean(sim)
		%      prod = 1; prod2 = 1
		%      for (i in 1:n) { 
			%        prod = (1/factorial(sim[i]))*prod ; prod2 = (lambda_hat^sim[i]*prod2)
			%      }
		%      l_geometric = (theta_hat^n)*((1-theta_hat)^(sum(sim) - n))
		%      l_poisson = exp(-n*lambda_hat)*prod*prod2
		%      lrt_null = l_poisson; lrt_alt = l_geometric
		%      lrt = lrt_null/lrt_alt
		%      ts = -2*log(lrt)
		%      #p-value approximation (wilks theorem)
		%      pval = 1-pchisq(ts,1); pval_seq = c(pval_seq,pval)
		%      pvalbound = -exp(1)*pval*log(pval)
		%      #selke bound
		%      bfbound = c(bfbound,pvalbound)
		%      empirical = bfss10_geometric_poisson(sim)$empirical
		%      theoretical = bfss10_geometric_poisson(sim)$theoretical
		%      bfssbound_empirical = c(bfssbound_empirical,empirical)
		%      bfssbound_theoretical = c(bfssbound_theoretical,theoretical)
		%    }
	%    plot(1:k,bfbound,type="l",col="red",xlab="n",
	%    ylab="Lower-bound",
	%    ylim=c(min(bfssbound_theoretical,bfssbound_empirical,bfbound),
	%    max(bfssbound_theoretical,bfssbound_empirical,bfbound)),
	%    main="B(10)-Lower-bound:-eplogp(red)/BFSS-theoretical(green)")
	%    lines(1:k,bfssbound_theoretical,col="green")
	%}

%#bfss_vs_eplogp(simulation="geometric",k=20,geom_param=0.5,poisson_param=0.1)
%\end{lstlisting}
%The chart generated using this code
%\begin{figure}[ht]
%    \centering
%    \subfigure{\includegraphics[width=0.90\textwidth]{graphics/BFSS10VSEPLOGP-GEOMETRIC-POISSON.png}} 
%    \caption{BFSS10 (Poisson vs. Geometric) vs -eplogp SS Bayes factor when $H_0$ is true.}
%    \label{fig:bfsspoissongeometric}
%\end{figure}
%\\ \\
%It's clear that for smaller sample sizes the $B_{10}^{SS}$ provided a better lower bound than $-ep\log(p)$ which was not providing decisive evidence in favor of $H_0$.
Moving forward, we now turn our attention to an empirical data-set that was previously studied by Cox (1962, p. 414) \cite{cox1962results}. This data-set comprises a sample of 30 observations, which were drawn from a Poisson distribution. Notably, this Poisson distribution is characterized by a mean value of 0.8. We aim to delve into a comprehensive analysis of this data-set within the purview of our ongoing investigation.
\begin{figure}[H]
	\centering
	\includegraphics[width=1\textwidth]{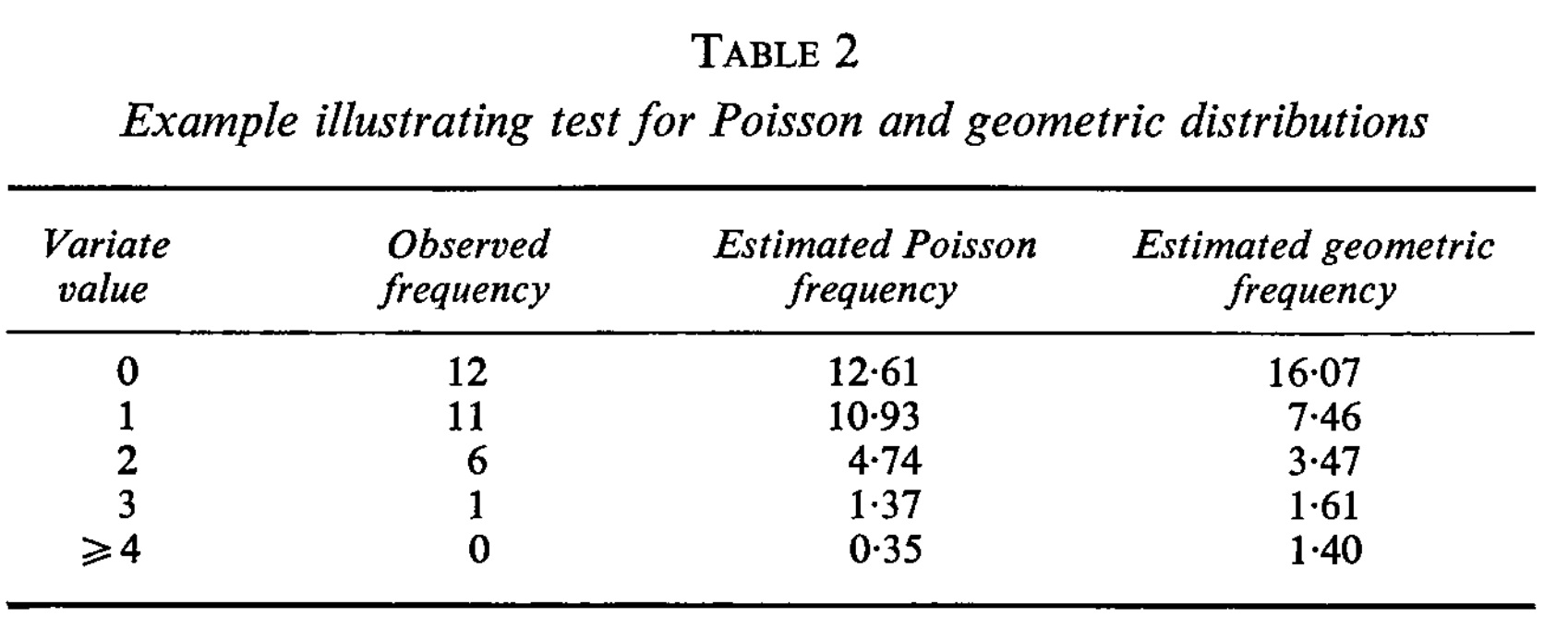}
	\caption{Cox (1962) p. 414}
	\label{fig:separatedssbf}
\end{figure}
The data utilized for this study was managed within the \textbf{R} programming environment. To ensure the robustness of our findings and to minimize the risk of any inherent bias or patterns influencing the results, we employed a randomization process on the data. This was achieved using the \emph{sample()} function, a commonly used \textbf{R} command for random sampling. Subsequent to this randomization, we proceeded to compute $\overline{B}_{10}^{I}$, specifically for the comparison between Poisson and Geometric distributions. The detailed steps followed in this computation are described next.\\ \\
\newpage
\textbf{R program}
\begin{lstlisting}[language=R]
	bfss10_comparison = function(y,c_theoretical = 2/sqrt(pi),type="empirical",chart=FALSE) { 
		bfss10_empirical = c(); 
		bfss10_theoretical = c(); 
		s = 0; 
		n = length(y)
		for (i in 1:n) { 
			s = s + y[i]
			y_fact = factorial(y[1:i])
			m1 = (i^(s + 0.5)*factorial(i-1)*prod(y_fact))
			m0 = (gamma(i + s + 0.5))
			bfss01= bfss01_mts(y[1:i],print=FALSE)
			c_empirical = bfss01$bfss_max
			bfss10_empirical = c(bfss10_empirical,c_empirical*m1/m0)
			bfss10_theoretical= c(bfss10_theoretical,c_theoretical*m1/m0)
		}
		if (chart) { 
			plot(1:n,bfss10_empirical,type="l",
			xlab="Number of samples",ylab="BFSS 10",
			main="Poisson data with parameter 0.8 (Cox 1962) Table 2")
		}
		else {
			if (type=="empirical") { return(bfss10_empirical) } 
			else if (type=="theoretical") { return(bfss10_theoretical) } 
		}
	}
	data = c(rep(0,12),rep(1,11),rep(2,6),3)
	data = sample(data)
	bfss10_comparison(y=data,type="theoretical",chart=TRUE)
\end{lstlisting}
\newpage
Using the code above we obtained the following chart:
\begin{figure}[H]
	\centering
	\includegraphics[width=.7\textwidth]{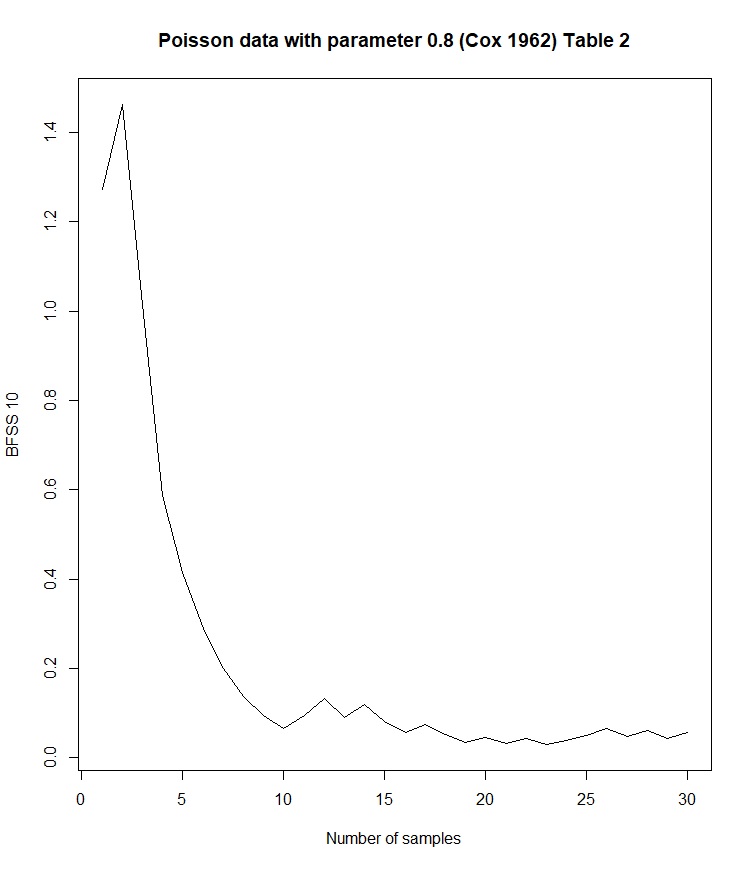}
	\caption{We used the randomized Cox data considered a growing vector with those samples and calculated the Bayes Factor as we added more data.}
	\label{fig:separatedssbf}
\end{figure}
It is evident from the chart above that the IBF bound is consistent with the results obtained by Cox (1962)\cite{cox1962results} and the IBF bound converges to the correct decision in favor of the adequate model which is the Poisson distribution. The graphical representation depicted above yields insightful observations about the performance of the IBF bound. Moreover, as sample sizes increase, the IBF bound shows convergence toward an optimal decision. It correctly leans in favor of the model that accurately reflects the underlying distribution of the data in this case, the Poisson. This empirical evidence reinforces the strength and consistency of the IBF as a tool for model selection.
\newpage
\section{The Negative Binomial vs. Poisson Separated Hypothesis Test}
In this section, we'll focus on a separated hypothesis test comparing the Negative Binomial and Poisson distributions. The Negative Binomial distribution is a discrete probability distribution that models the number of failures in a sequence of independent and identically distributed Bernoulli trials, before a specified (non-random) number of successes (denoted r) occurs. An alternative formulation is to model the number of total trials (instead of the number of failures). In fact, for a specified (non-random) number of successes (r), the number of failures (n - r) is random because the total trials (n) are random. For example, we could use the negative binomial distribution to model the number of days n (random) a certain machine works (specified by r) before it breaks down. Imagine a sequence of independent Bernoulli trials: each trial has two potential outcomes called "success" and "failure". In each trial, the probability of success is $\theta$ and of failure is $(1-\theta)$. We observe this sequence until a predefined number r of successes occurs. Then the random number of observed failures, 
Y follows the negative binomial distribution: $Y \sim NB(r,\theta)$. The pmf of Y is given by $$P(Y = y|r,\theta) = {y+r-1 \choose r-1}\theta^{r}(1-\theta)^{y} $$ 
where r is the number of successes, y is the number of failures, and $\theta$ is the probability of success on each trial. The likelihood of the Negative Binomial is given by
$$
\displaystyle\prod_{i=1}^{n} {y_i + r -1 \choose r-1}\theta^{rn}(1-\theta)^{\sum y_i}.
$$
The maximum likelihood estimator for $\theta$, the Fisher Information Matrix and the Jeffreys's prior for the Negative Binomial are:
$$ \hat{\theta} = \frac{r}{\bar{y} + r}, \hspace{5pt} I(\theta) = \frac{r}{\theta(1-\theta)^2}, \hspace{5pt} \sqrt{|I(\theta)|} \propto \frac{r^{1/2} }{\theta^{1/2}(1-\theta)} $$
The marginal distribution in this case is:
\begin{align*}
	m_1(\textbf{y}) &\propto \displaystyle\int_{0}^{1}\Bigg[\prod_{i=1}^{n}{y_i+r-1 \choose r-1}\Bigg]\theta^{rn}(1-\theta)^{\sum y_i}\frac{r^{1/2}}{\theta^{1/2}(1-\theta)}d\theta\\
	&= \displaystyle\Bigg[\prod_{i=1}^{n}{y_i+r-1 \choose r-1}\Bigg]r^{1/2}\int_{0}^{1}\theta^{rn + 1 - 1/2 - 1}(1-\theta)^{\sum y_i - 1}d\theta
\end{align*}
The change of variable $\beta = n\bar{y},\alpha =rn + 1/2$ yields: 
$$\int_{0}^{1}\theta^{\alpha - 1}(1-\theta)^{\beta - 1}d\theta = \frac{\Gamma(\alpha)\Gamma(\beta)}{\Gamma(\alpha + \beta)} $$
The marginal above becomes:
$$m_1(\textbf{y}) \propto  \displaystyle\Bigg[\prod_{i=1}^{n}{y_i+r-1 \choose r-1}\Bigg] r^{1/2}\frac{\Gamma(rn + 1/2)\Gamma(n\bar{y})}{\Gamma(n(r + \bar{y}) + 1/2)} $$
In this case, our null hypothesis is that $M_0$ is $Poisson(\lambda)$, thus 
$$ m_0(\textbf{y}) = \frac{c_0\Gamma(\sum y_i + \frac{1}{2})}{\displaystyle\prod_{i=1}^{n}y_i!}(1/n)^{(\sum y_i + \frac{1}{2})}$$
The Bayes factor can be expressed as:
$$ B_{10}(\textbf{y}) = \frac{c_1}{c_0}\displaystyle\Bigg[\prod_{i=1}^{n}{y_i+r-1 \choose r-1} y_i !\Bigg]r^{1/2}\frac{\Gamma(rn + 1/2)\Gamma(n\bar{y})}{\Gamma(n(r + \bar{y}) + 1/2)\Gamma(n\bar{y} + 1/2)} n^{(n\bar{y} + 1/2)}$$
By definition, we have that
$${y_i+r-1 \choose r-1} y_i ! = \frac{(y_i + r-1)!y_i!}{(r-1)!y_i!} = \frac{(y_i + r-1)!}{(r-1)!}$$
Hence, the Bayes factor can be expressed as:
$$\boxed{
	B_{10}(\textbf{y}) = \frac{c_1}{c_0}\displaystyle\Bigg[\prod_{i=1}^{n}\frac{(y_i + r-1)!}{(r-1)!}\Bigg]r^{1/2}\frac{\Gamma(rn + 1/2)\Gamma(n\bar{y})}{\Gamma(n(r + \bar{y}) + 1/2)\Gamma(n\bar{y} + 1/2)} n^{(n\bar{y} + 1/2)}
}
$$
Similarly, the Bayes factor for a minimal training sample $y(\ell)$ is:
$$
B_{10}(y(\ell)) = \frac{c_0}{c_1}\displaystyle\Bigg[\frac{(r-1)!}{(y(\ell) + r-1)!}\Bigg]r^{-1/2}\frac{\Gamma(r + y(\ell) + 1/2)\Gamma(y(\ell) + 1/2)}{\Gamma(r + 1/2)\Gamma(y(\ell))}
$$
The Intrinsic Bayes Factor in this case is:
$$
B_{10}(y(-\ell)|y(\ell)) = \frac{\displaystyle\big[\prod_{i=1}^{n}(y_i + r-1)!\big]n^{(n\bar{y} + 1/2)}\Gamma(r + y(\ell) + 1/2)\Gamma(y(\ell) + 1/2)\Gamma(rn + 1/2)\Gamma(n\bar{y})}{(y(\ell) + r-1)![(r-1)!]^{n-1}\Gamma(n(r + \bar{y}) + 1/2)\Gamma(n\bar{y} + 1/2)\Gamma(r + 1/2)\Gamma(y(\ell))}
$$
By using the identity $(y(\ell) + r - 1)! = \Gamma(y(\ell) + r)$, we found the Empirical SS Bayes factor upper bound for this test:
$$\overline{B}_{10}^{I^*}(\textbf{y}) = \frac{\displaystyle\big[\prod_{i=1}^{n}(y_i + r-1)!\big]n^{(n\bar{y} + 1/2)}\Gamma(rn + 1/2)\Gamma(n\bar{y})}{[(r-1)!]^{n-1}\Gamma(n(r + \bar{y}) + 1/2)\Gamma(n\bar{y} + 1/2)\Gamma(r + 1/2)}\displaystyle\sup_{\ell = 1,...,L}\frac{\Gamma(r + y(\ell) + 1/2)\Gamma(y(\ell) + 1/2)}{\Gamma(y(\ell) + r)\Gamma(y(\ell))}
$$
To obtain the supremum above, we must note that:
$${\Gamma(r + y(\ell) + 1/2)\Gamma(y(\ell) + 1/2)} \geq {\Gamma(y(\ell) + r)\Gamma(y(\ell))}, \hspace{10pt} \forall y(\ell) = 0,1,2,...
$$
Thus,
$$\displaystyle\sup_{\ell = 1,...,L}\frac{\Gamma(r + y(\ell) + 1/2)\Gamma(y(\ell) + 1/2)}{\Gamma(y(\ell) + r)\Gamma(y(\ell))} = \frac{\Gamma(r + y_{(n)} + 1/2)\Gamma(y_{(n)} + 1/2)}{\Gamma(y_{(n)} + r)\Gamma(y_{(n)})}$$
The result above, allowed us to express the Empirical IBF upper bound as:
$$\boxed{\overline{B}_{10}^{I^{*}}(\textbf{y}) = \frac{\displaystyle\big[\prod_{i=1}^{n}(y_i + r-1)!\big]n^{(n\bar{y} + 1/2)}\Gamma(rn + 1/2)\Gamma(n\bar{y})}{[(r-1)!]^{n-1}\Gamma(n(r + \bar{y}) + 1/2)\Gamma(n\bar{y} + 1/2)\Gamma(r + 1/2)}\frac{\Gamma(r + y_{(n)} + 1/2)\Gamma(y_{(n)} + 1/2)}{\Gamma(y_{(n)} + r)\Gamma(y_{(n)})}}
$$
Alternatively, we can express the Empirical IBF lower bound as:
$$\boxed{\underline{B}_{01}^{I^{*}}(\textbf{y}) = \frac{[(r-1)!]^{n-1}\Gamma(n(r + \bar{y}) + 1/2)\Gamma(n\bar{y} + 1/2)\Gamma(r + 1/2)}{\displaystyle\big[\prod_{i=1}^{n}(y_i + r-1)!\big]n^{(n\bar{y} + 1/2)}\Gamma(rn + 1/2)\Gamma(n\bar{y})}\frac{\Gamma(y_{(1)} + r)\Gamma(y_{(1)})}{\Gamma(r + y_{(1)} + 1/2)\Gamma(y_{(1)} + 1/2)}}
$$
Similarly, the Theoretical IBF Lower bound is:
$$\boxed{\underline{B}_{01}^{I}(\textbf{y}) = \frac{[(r-1)!]^{n-1}\Gamma(n(r + \bar{y}) + 1/2)\Gamma(n\bar{y} + 1/2)}{\displaystyle\big[\prod_{i=1}^{n}(y_i + r-1)!\big]n^{(n\bar{y} + 1/2)}\Gamma(rn + 1/2)\Gamma(n\bar{y})}\frac{\Gamma(r)}{\Gamma(1/2)}}
$$
\newpage
\begin{figure}
	\centering
	\subfigure{\includegraphics[width=0.90\textwidth]{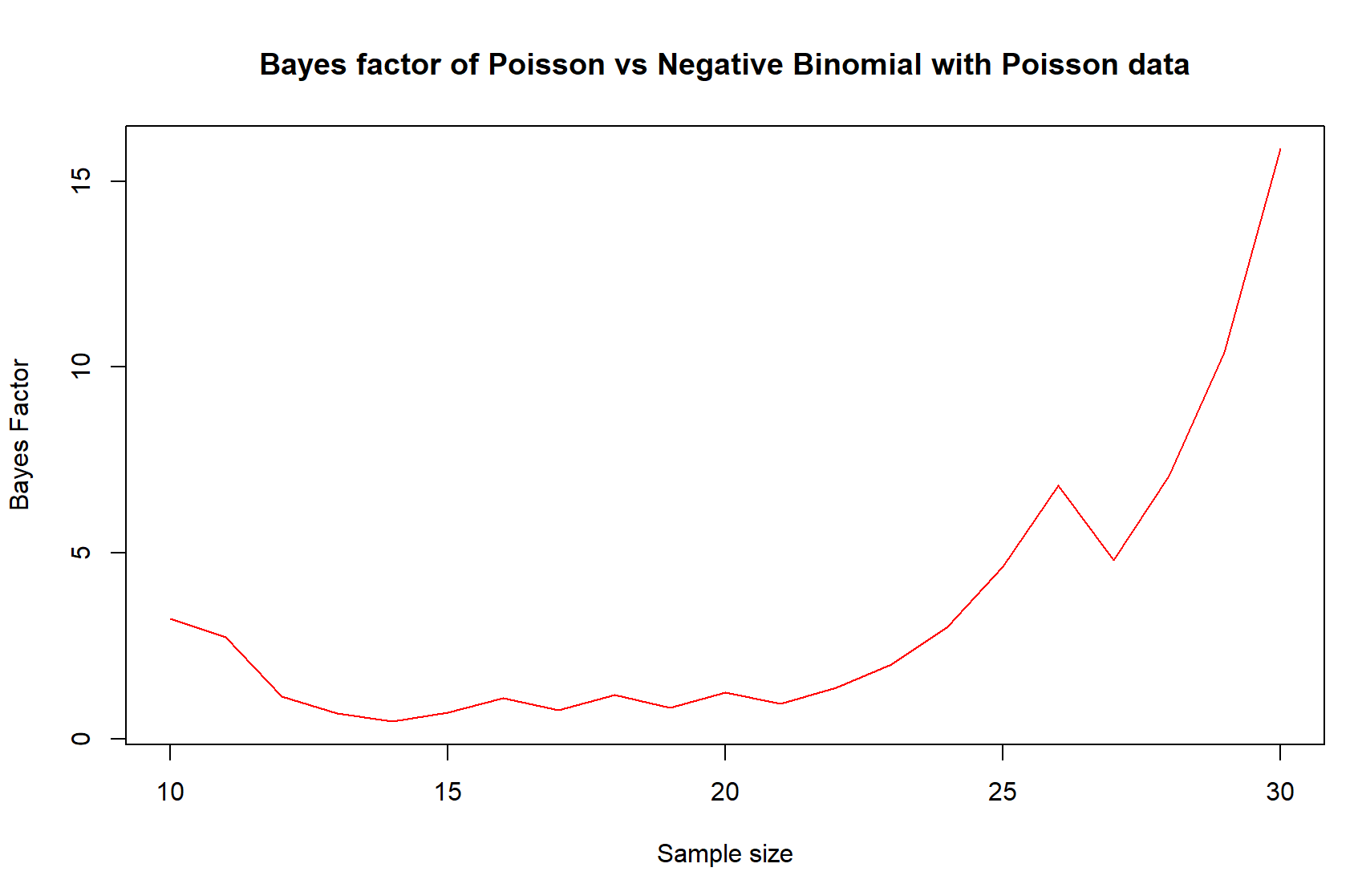}}
	\caption{Poisson vs. Negative Binomial IBF lower bound as the sample size increases with samples from $H_0$.}
	\label{fig:bfsspoissonnb}
\end{figure}

\begin{lstlisting}[language=R]
	bfss01 = function(y,m0="poisson",m1="nbinom",r=1) { 
		ybar = mean(y)
		n = length(y)
		lf1 = (n-1)*log(factorial(r-1)) + log(gamma( n*(r+ybar)+0.5 ) ) 
		+ log(gamma(n*ybar + 0.5)) + log(gamma(r+0.5)) + log(gamma(r)) 
		summ = 0
		for (i in 1:n) { summ = log(factorial(y[i] + r - 1)) + summ }
		lf2 = (n*ybar +0.5)*log(n) + log(gamma(r*n+0.5)) + log(gamma(n*ybar)) + summ
		exp(lf1)/exp(lf2)
	}
	
	n=30; y = c(); samples = c()
	for (i in 10:n) {
		samples = c(samples,rpois(n=1,lambda=1))
		y = c(y,bfss01(samples,r=1))
	}
	
	plot(10:n,y,type="l",col="red",xlab="Sample size",
	main="Bayes factor of Poisson vs Negative Binomial with Poisson data"
	,ylab="Bayes Factor")
\end{lstlisting}
%To do Fix the sample size n
%Surface que depende del y(ell), and r para el Bayes factor !!!
%Despues surface que depende de r y n para el Bayes Factor arithmetico
%Despues surface que depende de r y n para el Bayes Factor SS
\newpage
\section{Conclusions}
In conclusion, our exploration of separated hypothesis testing has provided valuable insights. Upon comprehensive analysis involving Bayes factor bounds, numerical simulations, and real-world data, our findings suggest a promising application of these bounds in general hypothesis testing. However, a crucial consideration lies in the careful selection of distributions for $M_0$ and $M_1$. It is imperative to choose these models in a manner that ensures $\inf B_{10}(\textbf{y}(\ell)) \neq 0$, or equivalently, $\sup B_{01}(\textbf{y}(\ell)) < \infty$. This selection criterion is pivotal as the effectiveness of the bound is contingent upon the appropriate identification of $M_0$ and $M_1$ and we used it in our examples, wheen we considered the Poisson as $M_0$ and the Geometric or Negative Binomial as $M_1$. Without this careful choice, the bound's utility in hypothesis testing will be affected.
\chapter{Ghosh and Samanta Alternative Approach}
This chapter explores a crucial 2001 study by J.K. Ghosh and T. Samanta \cite{berger2001objective}, presenting two methodologies that, when specific conditions are met, align with the SS approach. One such method involves setting $y_0 = \arg\sup_{y(\ell)}m_0(y(\ell))$ and subsequently resolving the equation $c B_{01}(y_0) = 1$. The Bayes factor that arises from this method is:
$$\boxed{B^{GS}_{10}(\textbf{y}) = cB_{10}(\textbf{y}) = B_{01}(y_0)B_{10}(\textbf{y}).}$$
\section{The Normal Mean Hypothesis Test Example}
\hspace{10pt}Consider the following simple hypothesis test
$$ H_0: N(0,1) \mbox{ vs. } H_1: N(\mu,1) $$
The marginal distributions for $H_0$ and $H_1$ are:
$$ m_0(\textbf{y}) = (\frac{1}{\sqrt{2\pi}})^{n} \exp\{-\frac{\sum y_i^2}{2} \} \hspace{ 10pt} m_1(\textbf{y}) = 1$$
To calculate the GS Bayes Factor, we calculated the following:
$$ y_0 = \arg\sup_{y(\ell) \in D}m_0(y(\ell)) = \arg\sup_{y(\ell) \in D}(\frac{1}{\sqrt{2\pi}})\exp\{-\frac{y(\ell)^2}{2} \} = 0$$
Additionally, we must solve this equation:
$$c B_{10}(y_0)= 1 \Longrightarrow c = B_{01}(y_0) = \frac{1}{\sqrt{2\pi}}$$
The Bayes Factor in this case is:
$$B_{10}(\textbf{y}) = (\sqrt{2\pi})^{n}\exp\{\frac{\sum y_i^2}{2}\}$$
Similarly, the GS Bayes Factor is:
$$\boxed{B^{GS}_{10}(\textbf{y}) = \frac{1}{\sqrt{2\pi}}(\sqrt{2\pi})^{n}\exp\{\frac{\sum y_i^2}{2}\} = (\sqrt{2\pi})^{(n-1)}\exp\{\frac{\sum y_i^2}{2}\}.}$$
In contrast, the IBF upper bound for this test is: 
$$\overline{B}^{I}_{10}(\textbf{y}) = B_{10}^{N}(\textbf{y})\frac{1}{\sqrt{2\pi}} $$
Therefore,
$$\overline{B}^{I}_{10}(\textbf{y}) = B^{GS}_{10}(\textbf{y}).$$
\section{The Exponential Hypothesis Test Example}
\hspace{10pt}Consider the following simple exponential hypothesis test  
$$ H_0: \lambda = \lambda_0 \mbox{ vs. } H_1: \lambda \neq \lambda_0 $$
In the exponential case, for a sample $\textbf{y}$ of size $n$, the marginal distribution is
\[
m(\textbf{x}) \propto \int_{0}^{\infty} {\lambda^{(n-1)}}\exp(-\displaystyle{\lambda\sum_{i=1}^{n}y_i})d\lambda = \displaystyle\frac{(\sum_{i = 1}^{n}y_i)^{n}}{\Gamma(n)}
\]
The marginal distributions for this hypothesis test are: 
$$ m_0(\textbf{y}) = {\lambda_0^{n}}{\exp(-\lambda_0\sum_{i=1}^{n}y_i)} \hspace{10pt} m_1(\textbf{y}) = \frac{(\sum_{i=1}^{n}y_i)^n}{\Gamma(n)} $$
To calculate we GS Bayes factor, we found that:
$$ y_0 = \arg\sup_{y(\ell) \in D}m_0(y(\ell)) = \arg\sup_{y(\ell) \in D}{\lambda_0}\exp(-\lambda_0y(\ell)) = 0$$
The next step is to solve the following equation:
$$
c B_{10}(y_0)= 1 \Longrightarrow c = B_{01}(y_0) = \lambda_0 \exp(-\lambda_0 y_0)\frac{1}{y_0} = \infty
$$
Evidently, the GS Bayes Factor does not exists for this case.
\subsection{Conclusions}
\hspace{10pt} The resemblance between the Ghost and Samanta (GS) approach and the IBF upper bound is evident. It is clear that if the marginal distribution $m_1(y(\ell))$ is independent of $y(\ell)$, then the two methods yield identical results. In other words, $B^{GS}_{10} = \overline{B}^{I}_{10}$. This equality arises due to the following reasoning:
\begin{align*}
	B^{GS}_{10} &=  B_{01}(y_0)B_{10}(\textbf{y}) \\
	&=  B_{01}(\arg\sup_{y(\ell) \in D}m_0(y(\ell)))B_{10}(\textbf{y}) \\
	&= B_{01}(\arg\sup_{y(\ell) \in D}[m_0(y(\ell))/m_1(y(\ell))])B_{10}(\textbf{y}) \\
	&= \sup_{y(\ell) \in D}B_{01}(y(\ell))B_{10}(\textbf{y}) = \overline{B}_{10}^{I}
\end{align*}
However, it is important to note that in certain cases, such as the exponential distribution, the IBF bound provides accurate results while the GS Bayes factor does not exist. Further research can be conducted to compare the suitability of each approach for different hypothesis tests in specific scenarios. Additionally, efforts can be made to identify general conditions that dictate the existence of the IBF bound and GS Bayes factors. Such investigations will contribute to a deeper understanding of the applicability and limitations of these alternative methodologies.
\chapter{Research Contributions}
\hspace{10pt}My doctoral thesis makes significant contributions to the field of Bayes factors theory. In this research, I introduce a new universal robust bound for Bayes factors that is an alternative to previous methods and is applicable to a variety of models. The bound is based on the SS Bayes Factor, which is a measure of evidence in favor of one hypothesis compared to another. This research significantly advances our understanding of Intrinsic Bayes Factors (IBF), presenting a noteworthy contribution to the field. Specifically, we have established stringent lower/upper bounds on all types of IBF - arithmetic, geometric, and median.

Our proposed bound surpasses the existing benchmark of $-ep\log(p)$ in two crucial aspects. Firstly, it shares identical asymptotic behavior as a Bayes Factor, adhering more closely to the foundational tenets of Bayesian analysis. Secondly, and more importantly, our lower bound corresponds to an actual prior distribution. This prior can be interpreted as a form of least favorable prior for the null hypothesis, embodying the spirit of conservatism intrinsic to statistical hypotheses testing.

In contrast to other IBFs, our derived variant exhibits computational efficiency. The ease of computation is particularly noteworthy given the generally arduous computational demands of other IBFs. Therefore, our lower bound provides not only a stronger mathematical formulation but also a more practical tool for researchers employing Bayesian methods. This approach bridged the gap between the Intrinsic Bayes Factors Theory with the SS Bayes Factors and compared the arithmetic Intrinsic Bayes Factor with the IBF bounds. Moreover, I developed the "Empirical IBF Bounds" methodology, resulting in a novel Empirical IBF lower bound that is even better than the theoretical bound.

To develop the Empirical Intrinsic Bayes Factor Bound, I constructed a mathematical formula and an algorithm to determine the set of all possible empirical minimal training samples and compared the empirical and theoretical Bayes Factor bounds. The thesis also contains a comparison of the lower bound with the Sellke et. al (1990) \cite{sellke2001calibration} $-ep\log(p)$ universal lower bound and the new Empirical and Theoretical SP Bayes Factors that are defined for the first time.

The thesis includes several theorems that demonstrate the effectiveness of these new methods. For example, \textbf{Theorem 3.1}  shows that the IBF lower bound gets better than the $-ep\log(p)$ universal lower bound as the sample size increases under the null hypothesis, \textbf{Theorem 3.2} states and shows that using our novel Empirical SP Prior, we can obtain our Empirical IBF Bound, and  \textbf{Theorem 3.3} states and shows that our novel Empirical SP Prior and SP Bayes factor, converges to the Theoretical SP Prior and SP Bayes Factor. This work also includes several examples and simulations, such as the Normal and Exponential distribution, ANOVA, and Linear models. In section 3.5, I generalized the technique used to compare the AIBF with the IBF Bound, and I showed that the same can be said for any other kind of Bayes factor that uses any measure of central tendency.

I also derived and calculated various Bayes factors and priors for linear models and specifically for ANOVA. I developed the necessary calculations for the Theoretical and Empirical Generalized IBF Bounds in General linear models by introducing a generalized prior to obtain both empirical and theoretical IBF bounds for two nested linear models and ANOVA. Using this generalization, we derived the ANOVA Bayes factors for different priors such as the Full Jeffreys prior and the Modified Jeffrey's prior. Furthermore, through these developments, I was able to demonstrate that the Generalized SS Bayes factor for ANOVA does not exist for the Reference Prior.

In this work, we also calculated the EP-Priors and EP-Bayes factor for the exponential distribution and the hypothesis test $\lambda = \lambda_0 \mbox { vs } \lambda \neq \lambda_0$ and compared their results with the between the SP Priors and the SP Bayes factor through numerical simulations and charts. We found that the results are similar but the prior obtained in the SP-Prior approach does not depend on the samples, as in the EP-Prior and it's much easier to calculate while their difference is small. Additionally, for the exponential case, the GS Bayes Factor was calculated in section 4.2 and compared to the SS Bayes factor and we've concluded that the GS Bayes factor does not exist while the IBF Bound both the theoretical and the empirical are well-defined and consistent. In the Geometric vs Poisson Separated models example, we calculated the AIBF and we showed the usefulness consistency, and stability of our lower bound of the AIBF.

These contributions have the potential to improve the accuracy and efficiency of statistical modeling methods, making them more accessible to researchers and practitioners in various fields. Ultimately, this research provides a valuable contribution to the field of Bayes factors theory and statistical modeling, with the potential to advance scientific knowledge and improve decision-making in a variety of domains.

\bibliographystyle{chicago}%unsrt} 						      % unstr is the same as plain, but entries appear in the order of citation.
\bibliography{referencefiles/references} 		% Use the References.bib file

\end{document}